\documentclass{amsart}

\usepackage[utf8]{inputenc}
\usepackage{amsmath,amssymb,amsthm,units, stmaryrd,stackrel,relsize,bm}
\usepackage{enumitem}
\usepackage{mathdots}
\usepackage{bussproofs}
\usepackage{tikz,tikz-cd}
\usetikzlibrary{calc,arrows, decorations.pathmorphing}

\newtheorem{theorem}{Theorem}

\newtheorem{lemma}[theorem]{Lemma}

\newtheorem{claim}[theorem]{Claim}
\newtheorem{corollary}[theorem]{Corollary}

\theoremstyle{definition}
\newtheorem{definition}[theorem]{Definition}
\theoremstyle{remark}
\newtheorem{remark}[theorem]{Remark}
\usepackage[
    colorlinks=true, 
    citecolor=red,
    linkcolor=blue,
    urlcolor=blue]{hyperref}

\newcommand\zfc{{\mathsf{ZFC}}}

\newcommand\kp{{\mathsf{KP}}}

\newcommand\Ord{{\mathsf{Ord}}}

\newcommand\dom{{\text{dom}}}

\newcommand\wfp{\mathrm{wfp}}

\newcommand\wo{{\textsc{wo}}}

\newcommand\ca{{\mathsf{CA_0}}}
\newcommand\atr{{\mathsf{ATR_0}}}
\newcommand\aca{{\mathsf{ACA_0}}}
\newcommand\rca{{\mathsf{RCA_0}}}

\newcommand\supp{\mathrm{supp}}

\newcommand{\PI}{\boldsymbol\Pi}
\newcommand{\SIGMA}{\boldsymbol\Sigma}
\newcommand{\DELTA}{\boldsymbol\Delta}

\makeatletter
\def\Ddots{\mathinner{\mkern1mu\raise\p@
\vbox{\kern7\p@\hbox{.}}\mkern2mu
\raise4\p@\hbox{.}\mkern2mu\raise7\p@\hbox{.}\mkern1mu}}
\makeatother

\usepackage{etoolbox}
\makeatletter
\pretocmd{\chapter}{\addtocontents{toc}{\protect\addvspace{13\p@}}}{}{}
\pretocmd{\section}{\addtocontents{toc}{\protect\addvspace{9\p@}}}{}{}
\pretocmd{\subsection}{\addtocontents{toc}{\protect\addvspace{2\p@}}}{}{}
\makeatother
\makeatletter
\def\@tocline#1#2#3#4#5#6#7{\relax
  \ifnum #1>\c@tocdepth 
  \else
    \par \addpenalty\@secpenalty\addvspace{#2}%
    \begingroup \hyphenpenalty\@M
    \@ifempty{#4}{%
      \@tempdima\csname r@tocindent\number#1\endcsname\relax
    }{%
      \@tempdima#4\relax
    }%
    \parindent\z@ \leftskip#3\relax \advance\leftskip\@tempdima\relax
    \rightskip\@pnumwidth plus4em \parfillskip-\@pnumwidth
    #5\leavevmode\hskip-\@tempdima #6\nobreak\relax
    \ifnum#1<0\hfill\else\dotfill\fi\hbox to\@pnumwidth{\@tocpagenum{#7}}\par
    \nobreak
    \endgroup
  \fi}
\makeatother
\begin{document}
\title[Reverse Mathematics of Analytic Measurability]{The Reverse Mathematics of\\ Analytic Measurability}
\subjclass[2020]{03B30 (primary), 03D60, 28A05 (secondary)}
\keywords{reverse mathematics, analytic set, Lebesgue measure, hyperarithmetical set}
\author{J. P. Aguilera, T. Kouptchinsky, and K. Yokoyama}

\begin{abstract}
It is shown that the Lebesgue-regularity of analytic sets is equivalent to $\SIGMA^1_1$-Induction over $\atr$. On the other hand, the Lebesgue-measurability of analytic sets is equivalent to $\PI^1_1{-}\mathsf{CA}_0$. This answers a question of Simpson (1999).
\end{abstract}
\date{\today $\,$ (compiled)}
\clearpage
\maketitle
\numberwithin{equation}{section}
\setcounter{tocdepth}{1}

\section{Introduction}\label{SectionIntro}
A set $A\subset [0,1]$ is called \textit{analytic} if it is a continuous image of a Borel set. A classical theorem of Lusin \cite{Lu17} is that all analytic sets are Lebesgue-measurable. 
Reverse Mathematics is the branch of Mathematical Logic which studies the question: ``given a mathematical theorem $\phi$, which axioms are necessary to prove $\phi$?''.  This article concerns the Reverse Mathematics of Lusin's theorem. We shall see that the strength of Lusin's theorem depends on how precisely it is formalized.

We work throughout in the framework of second-order arithmetic. 
Real numbers are each identified with a set of natural numbers $X \subset \mathbb{N}$; $\phi(X)$ in the language of second-order arithmetic define sets of reals. 
We write $X \in A_{\phi}$ if $\phi(X)$ holds. Often we simply write $X \in A$ when referring to a set $A$ and omit its defining formula. 
Friedman's \cite{Fr75} system of \textit{Arithmetical Transfinite Recursion} ($\atr$) yields a convenient framework for working with Borel sets and developing measure theory over these.
 
A set of reals $A$ is \textit{Lebesgue-regular} if
$\lambda^*(A) = \lambda_*(A),$
where
\[\lambda^*(A) = \inf\Big\{\lambda(B) : \text{$A\subset B$ and $B$ is open}\Big\}\]
is the \textit{outer Lebesgue measure} of $A$ and 
\[\lambda_*(A) = \sup\Big\{\lambda(B) : \text{$B\subset A$ and $B$ is compact}\Big\}\]
is the \textit{inner Lebesgue measure} of $A$. If $\lambda^*(A) = \lambda_*(A)$ and this value exists, we denote it by $\lambda(A)$ and say that $A$ is \textit{Lebesgue measurable}. When working in strong theories such as $\zfc$, the regularity and measurability of sets is seen to be equivalent and has many other characterizations familiar from measure theory, but this is not immediately clear in the context of subsystems of analysis.

The Reverse Mathematics of measure theory has been developed by various authors, particularly by Simpson and Yu \cite{SY90} and of Brown, Giusto, and Simpson \cite{BGS02}. As far as recent results are concerned, we mention the work of Westrick \cite{MR4900338}, Astor, Dzhafarov, Montalb\'an, Solomon, and Westrick \cite{MR4085059}.
We also mention the work of Yu \cite{Yu90a}, Avigad, Dean, and Rute \cite{ADR12} and of Kjos-Hanssen, Miller, and Solomon \cite{KHMiSo12}.

The existence of $\lambda(U)$ for all open sets $U$ is equivalent to Arithmetical Comprehension ($\aca$) over Recursive Comprehension ($\rca$). $\atr$ is sufficient to prove that $\lambda(A)$ exists for all Borel sets $A$, by a theorem of Yu \cite{Yu93}. In his textbook on Reverse Mathematics, Simpson \cite{Simpson} asked whether $\atr$ ``suffices to prove measurability and regularity of analytic sets in some appropriate sense.'' The purpose of this article is to answer this question. The following is our main theorem:

\begin{theorem}\label{TheoremRegular}
The following are equivalent over $\atr$.
\begin{enumerate}
\item All analytic sets $A \subset [0,1]$ are Lebesgue-regular; and
\item $\SIGMA^1_1$-Induction for $\mathbb{N}$.
\end{enumerate}
\end{theorem}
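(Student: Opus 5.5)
We prove the two directions separately. Throughout, an analytic set $A\subset[0,1]$ is coded by a tree $T$ on $2\times\mathbb{N}$ (or via a Borel code and a continuous function), and $X\in A$ iff $\exists Y\,(X,Y)\in[T]$. Regularity is read as: for every rational $q$, if every open $U\supseteq A$ has $\lambda(U)>q$, then some compact $K\subseteq A$ has $\lambda(K)>q-\varepsilon$ for each $\varepsilon>0$. Equivalently, the infimum and the supremum, both taken as cuts of rationals, coincide.

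\textbf{Forward direction (1)$\Rightarrow$(2).} Assume $\SIGMA^1_1$-Induction fails. Then there are a $\Sigma^1_1$ formula $\phi(n)$ and an $m$ with $\phi(0)$, $\forall n<m\,(\phi(n)\to\phi(n+1))$, and $\neg\phi(m)$; we may assume $\phi$ is closed downward. Build the analytic set
\[
A=\bigcup_{n<m}\bigl\{X\in[a_n,a_{n+1}) : \phi(n)\bigr\},
\]
that is, $A$ is the union of disjoint intervals of length $1/m$ whose indices lie in the initial segment $I=\{n:\phi(n)\}$, which is cut-like. Any compact $K\subseteq A$ is coded by a tree. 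By $\Sigma^1_1$ reasoning, the set of intervals meeting $K$ in positive measure is bounded by some $k$ with $\phi(k)$; this uses the bounding principle available in $\atr$ (in particular $\Sigma^1_1$ choice and $\Delta^1_1$ comprehension). Hence $\lambda_*(A)=\sup_{\phi(k)} k/m$. Any open $U\supseteq A$ likewise yields, arithmetically, a bound $k'$ at which $U$ must cover interval $k'$ up to measure. The key point is that the map $k\mapsto$ "interval $k$ almost covered by $U$" is arithmetical in $U$. One then shows that the inner measure set is not an initial segment with a supremum matching the outer infimum: equality of the two cuts would make $I$ arithmetically definable from the witnesses $U,K$, and arithmetical induction in $\atr$ would then give $\phi(m)$. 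I expect this direction to be the main obstacle. Making the cut $I$ genuinely unbounded-but-bounded requires carefully choosing interval lengths, for instance $2^{-n}$ scaled so that the sum over $I$ is not attained.

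\textbf{Reverse direction (2)$\Rightarrow$(1).} Use the classical proof via Lusin's sieve or Kechris' approximations. For a tree $T$, define the Borel (hyperarithmetic) approximations $A_\alpha$ along the rank of $T$ restricted to each $X$. For each $\varepsilon$ and each finite stage $n$ of a shrinking argument, we choose finite sequences $s_n$ such that
\[
\lambda^*\bigl(A\cap\{X:\exists Y\supseteq s_n\}\bigr)>\lambda^*(A)-\varepsilon(1-2^{-n}).
\]
Here $\lambda^*$ of an analytic set is computed using $\atr$ via Borel hulls. The induction that the $n$-th choice exists is a $\Sigma^1_1$ statement, namely "there exists an open cover code / sequence with the property", and $\SIGMA^1_1$-Induction lets it go through for every $n$. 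Then $\Sigma^1_1$-choice, which follows from $\atr$, yields a sequence of choices. The resulting compact set
\[
K=\{X:\forall n\,\exists Y\supseteq s_n,\ (X,Y)\in[T\restriction\text{finite-branching}]\}
\]
lies in $A$ and has $\lambda(K)\ge\lambda^*(A)-\varepsilon$. This argument is the standard continuity-of-outer-measure argument, and we must verify that the outer measure of an increasing union is the limit. That verification is itself an instance of $\SIGMA^1_1$-induction, and we check it over $\atr$ first.
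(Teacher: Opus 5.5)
Your direction (2)$\Rightarrow$(1) has a genuine gap: the classical Lusin-scheme argument cannot be run in $\atr+\SIGMA^1_1$-Induction.

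\emph{Outer measures are not available.} The argument treats $\lambda^*(A)$ and $\lambda^*(A\cap\{X:\exists Y\supseteq s_n\})$ as real numbers ``computed using $\atr$ via Borel hulls''. These reals do not exist in general. The proof of Lemma~\ref{LemmaReversalComp} goes through verbatim with $\alpha=\lambda^*(A)$, because the analytic set used there is an initial segment of $[0,1]$. So if $\atr$ supplied outer measures of analytic sets as reals, $\atr+\SIGMA^1_1$-Induction would prove $\PI^1_1{-}\mathsf{CA}_0$, which it does not.

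\emph{The induction is not $\Sigma^1_1$.} For analytic $B$, ``$U\supseteq B$'' is $\Pi^1_1$. So ``every open $U\supseteq B$ has $\lambda(U)>q$'', the property by which you select the $s_n$, is a priori $\Pi^1_2$. Neither $\SIGMA^1_1$-Induction nor $\Sigma^1_1$-choice applies to it.

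\emph{Continuity from below is not an induction.} Continuity of outer measure along increasing unions is not an instance of $\SIGMA^1_1$-Induction. Classically it uses measurable hulls, or covers $U_j\supseteq B_j$ with $\lambda(U_j)<\lambda^*(B_j)+\delta 2^{-j}$. Choosing these presupposes the missing reals $\lambda^*(B_j)$.

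What is missing is a way to get inner compact and outer open approximations without ever naming $\lambda^*(A)$. The paper does this by random forcing over a nonstandard $\omega$-model $M\models\mathsf{BST}+V=L$ that contains every recursive wellorder. By Spector--Gandy, membership of $M$-random reals in a co-analytic $A$ (enough, by complementation) is decided by $\SIGMA^0_2$ conditions in $L_\alpha$ with $\alpha\in\wo$. One builds in $M$ transfinite sequences of pairwise disjoint such conditions. Unless these are all bounded below some $\beta\in\wo$ (the easy case), overspill gives one of nonstandard length. Since the total measure is at most $1$, only finitely many pieces have measure $>2^{-n}$. $\SIGMA^1_1$-Induction is used only here, as finite $\SIGMA^1_1$-comprehension, to sort these pieces into standard ones (inside $A$ off a null set) and nonstandard ones (disjoint from $A\cap R(M)$).

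Your reversal has a sound core, and it is the easy half, not the main obstacle. But it does not follow from the formalization you fixed. Under your cut reading, your $A$ is regular whether or not induction fails. Suppose every open $U\supseteq A$ has $\lambda(U)>q$, with $(j-1)/m<q\le j/m$. Then $j-1\in I$, since otherwise $A\subseteq[0,(j-1)/m)$ has a cover of measure $<q$. By downward closure $[0,j/m)\subseteq A$, which contains compact sets of measure $>q-\varepsilon$. So equal cuts give you no witnesses $U,K$.

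You need the reading the paper uses: for each $\varepsilon>0$ there are compact $K\subseteq A$ and open $U\supseteq A$ with $\lambda(U)-\lambda(K)<\varepsilon$. With it, equal lengths $1/m$ suffice and no $2^{-n}$ scaling is needed. Put $J_K=\{j<m:\lambda(K\cap[j/m,(j+1)/m])>0\}$ and $J_U=\{j<m:\lambda(U\cap[j/m,(j+1)/m])=1/m\}$. Then $J_K\subseteq I\subseteq J_U$ and $\lambda(U)-\lambda(K)\ge(|J_U|-|J_K|)/m$. So $\varepsilon<1/m$ forces $I=J_U$, which is a set by $\aca$, contradicting the induction axiom. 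This is a valid alternative to the paper's Lemma~\ref{LemmaReversalInd}, which instead derives finite $\SIGMA^1_1$-comprehension by reading the witness set off the binary expansion of a point just below $\lambda(G)$.
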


In addition to answering Simpson's question, Theorem \ref{TheoremRegular} is remarkable in that it presents a reversal of a theorem to $\SIGMA^1_1$-Induction, an axiom that essentially deals with $\mathbb{N}$. Nemoto \cite{Ne09} has shown that $\SIGMA^1_1{-}\mathsf{IND}$ is equivalent to a determinacy principle for Cantor space over $\mathsf{ATR}_0$, namely, the determinacy of all games with payoff in the class $B(\DELTA^0_1,\SIGMA^0_2)$ of all two-sided separated unions of $\SIGMA^0_2$ and $\PI^0_2$ by clopen sets. Day, Greenberg, Harrison-Trainor, and Turetsky \cite{DGHTT} have also made use of $\SIGMA^1_1{-}\mathsf{IND}$ in their recent work on the effective classification of Borel Wadge classes. We also mention recent work by Towsner, Weisshaar, and Westrick \cite{MR4575273}, Fern\'andez-Duque, Shafer, Towsner, and Yokoyama \cite{MR4590512}, and Fern\'andez-Duque, Shafer, and Yokoyama \cite{MR4160946}, which deal with the reverse mathematics of theorems in the region surrounding $\atr$ and $\Pi^1_1{-}\mathsf{CA}_0$.

Theorem \ref{TheoremRegular} deals with Lebesgue-regularity, i.e., the equality of the outer and inner measures. If one additionally demands that this value exist as a number, the strength of Lusin's theorem increases:

\begin{theorem}\label{TheoremMeasurable}
The following are equivalent over $\atr$.
\begin{enumerate}
\item All analytic sets $A \subset [0,1]$ are Lebesgue-measurable; and
\item $\PI^1_1$-Comprehension.
\end{enumerate}
\end{theorem}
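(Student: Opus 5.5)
We prove the two directions separately, using Theorem \ref{TheoremRegular} and the fact that $\PI^1_1$-Comprehension implies $\SIGMA^1_1$-Induction.

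\emph{Forward direction.} We work in $\PI^1_1{-}\mathsf{CA}_0$. Let $A\subset[0,1]$ be analytic, given as the projection of a tree $T$ on $2\times\mathbb{N}$ (or $\mathbb{N}\times\mathbb{N}$ after coding). Since $\SIGMA^1_1$-Induction holds, Theorem \ref{TheoremRegular} already gives $\lambda^*(A)=\lambda_*(A)$ in the sense of the defining formulas. What remains is to show that this common value exists as a real. For rational $q$, the statement ``$\lambda^*(A)<q$'' says that there is an open (coded) $B\supseteq A$ with $\lambda(B)<q$. Containment $A\subseteq B$ is $\PI^1_1$, so the whole statement is $\SIGMA^1_2$ on its face. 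The key step is to reduce it to $\PI^1_1$ using the classical approximation of $\lambda^*(A)$ by the Borel sets $A_\alpha$ obtained from the Kleene–Brouwer rank analysis of the sections $T_x$; in $\PI^1_1{-}\mathsf{CA}_0$ we have the hyperjump, and hence countable coded well-orders long enough. Concretely, I would show
\[
\lambda^*(A)=\inf_\alpha \lambda(A_\alpha)\quad\text{and}\quad \lambda_*(A)=\sup_\alpha \lambda(A^\alpha),
\]
where $\alpha$ ranges over countable well-orders, $A_\alpha\supseteq A$ is Borel (``rank of $T_x$ is not $<\alpha$''), and $A^\alpha\subseteq A$ is Borel. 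With regularity, ``$\lambda(A)>q$'' then becomes ``$\exists$ well-order $\alpha$ with $\lambda(A^\alpha)>q$''. This is $\SIGMA^1_1$ relative to well-orderedness, so it is $\SIGMA^1_2$-ish, but it is equivalent to a $\PI^1_1$ statement via the dual inequality, namely ``for all well-orders $\alpha$, $\lambda(A_\alpha)>q$''. The cut $\{q:\lambda(A)>q\}$ is therefore $\DELTA^1_2$-definable, and $\PI^1_1$-CA yields $\DELTA^1_2$-CA$_0$? No, that is false in general. Instead I would avoid this by observing that, in $\PI^1_1{-}\mathsf{CA}_0$, Borel measures of the sets $A_\alpha$ (with Yu's theorem) stabilize at some countable $\alpha$ bounded by $\omega_1^{T}$, the ordinal of the hyperjump of $T$. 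So it suffices to take $\alpha$ the Kleene–Brouwer order restricted to the well-founded part, which exists by $\PI^1_1$-CA. Then $\lambda(A)=\lambda(A_\alpha)$ is a Borel measure that exists by $\atr$.

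\emph{Reverse direction.} Assume every analytic set is measurable. Over $\atr$ it suffices to show that for any sequence of trees $(T_n)$ the set $\{n: T_n \text{ is well-founded}\}$ exists. Coding by a single real and a dyadic encoding, I would build an analytic set
\[
A=\bigcup_n \{x\in I_n : T_n \text{ has a path}\}\cup\ldots,
\]
with disjoint intervals $I_n$ of measure $2^{-n}$ scaled by a factor of $3^{-n}$-type weights, so that each $n$ contributes either full weight or zero. Then the real $\lambda(A)$ encodes the ill-founded set in base 3 digits, and the set is recoverable arithmetically from $\lambda(A)$. Hence $\SIGMA^1_1$-CA follows, which is equivalent to $\PI^1_1$-CA.

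\emph{Main obstacle.} I expect the forward stabilization step to be hardest: proving, inside $\PI^1_1{-}\mathsf{CA}_0$, that the approximating Borel sets at the well-founded part of the Kleene–Brouwer ordering correctly compute the outer measure. I would first try to relativize the hyperarithmetic-measure argument (Sacks–Tanaka style) to $T$.
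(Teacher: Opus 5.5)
\textbf{Forward direction.} Your whole argument rests on the step you yourself flag as the main obstacle, and that step is not supplied. You need that $A_{\omega_1^T}\setminus A=\{x : T_x \text{ is well-founded and } \mathrm{rank}(T_x)\geq\omega_1^T\}$ is null. Without it, $A\subseteq A_{\omega_1^T}$ only gives $\lambda^*(A)\leq\lambda(A_{\omega_1^T})$, and, as you noticed, regularity alone only makes the cut of $\lambda(A)$ look $\DELTA^1_2$. That nullness is a relativized Sacks theorem ($\omega_1^{x\oplus T}=\omega_1^T$ for almost every $x$), and proving it inside $\PI^1_1{-}\mathsf{CA}_0$ is precisely the nontrivial content of this direction.

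In the paper this is what the forcing machinery provides. Lemma \ref{LemmaRandomRecursive} shows that every well-order recursive in a definably $M$-random real is isomorphic to a recursive one, via class random forcing and the foundation argument of Lemma \ref{LemmaFoundation}. The paper then reruns \S\ref{SectionApprox} and uses $\PI^1_1$-comprehension to collect the standard indices of an approximation. In this way $A\cap R(M)$ agrees with a countable union of $\SIGMA^0_2$ sets that exists as a set, so its measure exists.

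Your constituent-based route is a reasonable alternative, close to the simplification mentioned in \S\ref{SectConclusion}, but only once that lemma is proved; as written it is a plan, not a proof. Note also that the cut-off must be a well-order of length $\omega_1^T$, obtained for instance from $\mathcal{O}^T$. The well-founded part of the Kleene--Brouwer order of $T$ will not do: it does not bound the ranks of the sections $T_x$, and it can even be empty.

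\textbf{Reverse direction.} Your encoding is essentially that of Lemma \ref{LemmaReversalComp}, but ``the set is recoverable arithmetically from $\lambda(A)$'' hides the step that makes it work over $\atr$. Computing $X$ digit by digit from $\lambda(A)$ is easy. Proving $\forall n\,(n\in X\leftrightarrow T_n\text{ is ill-founded})$ is not, because you cannot write $\lambda(A)=\sum_{n\in P}3^{-n-1}$ before knowing that $P=\{n: T_n \text{ is ill-founded}\}$ exists. The natural argument is an induction on $n$ of $\forall j<n\,(j\in X\leftrightarrow T_j\text{ is ill-founded})$, which is not arithmetical, and its step needs $P\cap n$ to exist as a finite set. $\atr$ provides neither; this is exactly the strength isolated in Theorem \ref{TheoremRegular}.

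The paper closes this gap by first observing that measurability implies regularity, so $\SIGMA^1_1$-induction holds by Lemma \ref{LemmaReversalInd}. It then runs the induction using finite $\SIGMA^1_1$-comprehension (Lemma \ref{LemmaInductionComprehension}). Alternatively, take the $I_j$ to be disjoint closed intervals of length $3^{-j-1}$. For each fixed $n$, choose compact $K\subseteq A$ and open $U\supseteq A$ with $\lambda(U)-\lambda(K)<3^{-n-1}/2$. Then $P\cap(n+1)=\{j\leq n: K\cap I_j\neq\varnothing\}$ exists as a finite set, after which checking the first $n+1$ digits is arithmetical. Either way, some such argument has to be given.
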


Theorem \ref{TheoremMeasurable} is much easier to establish than Theorem \ref{TheoremRegular}, but it clarifies the picture and we include it for completeness. The fact that the space $[0,1]$ is of finite measure or compact does not play a role in the theorems; see \S\ref{SectConclusion}.

Typically, in Reverse Mathematics the bulk of the work consists in reversing the theorems to the axioms. Here, this is arguably not the case. In \S\ref{SectReversals} we present the reversals of Theorem \ref{TheoremRegular} and Theorem \ref{TheoremMeasurable}. 
Most of the work however lies in the proof of Lebesgue-regularity from $\SIGMA^1_1$-Induction alone. For this proof, the most delicate step occurs in \S\ref{SectionApprox}, where the hypothesis of $\SIGMA^1_1$-Induction on $\mathbb{N}$ is used. 
The purpose of \S\ref{SectModelM}--\S\ref{SectSG} is to set the stage for \S\ref{SectionApprox}.
The main idea is to draw inspiration from Solovay's \cite{So70} construction of a model of Zermelo-Fraenkel set theory where every set is Lebesgue measurable. In our case the argument requires the use of class forcing over a family of standard and non-standard models of a very weak set theory obtained through the familiar method of \textit{pseudohierarchies}. Forcing over the standard models yields a sequence of approximations to the inner measure of a given co-analytic set $A$, and forcing over the non-standard models yields a sequence of approximations to its outer measure. The hypothesis of $\SIGMA^1_1$-Induction is used in showing that these approximations eventually become arbitrarily good, proving the theorem.

\subsection*{Acknowledgements}
The work of J. P. A. and T. K.  was partially supported by FWF grants P36837, STA-139, and PAT2264325.
The work of K. Y. was partially supported by  JSPS KAKENHI grant numbers JP21KK0045 and JP23K03193. The first author is grateful to the other two authors for saving his life in the Japanese wilderness, and even more grateful to the Erwin Schr\"odinger Institute for their support as part of the thematic program ``Reverse Mathematics'' in 2025. The authors would like to thank T. Nemoto, A. Weiermann, and L. Yu for comments on talks given on this work. This research benefited from discussions at the RIMS symposium
"Computability and Complexity in Analysis 2025."

\section{Preliminaries and notation}
The theory $\atr$ in the two-sorted language $\mathrm{L}_2 = \{+,\times, \in\}$ of second-order arithmetic consists of the Robinson axioms $Q$ for addition and multiplication together with the induction axiom 
\[0 \in X \wedge \forall x\, (x \in X \to x+1 \in X) \to \forall x\, (x\in X)\]
and the axiom schema of arithmetical transfinite recursion:
\[\forall {\prec} \Big(\text{$\prec$ is a wellorder } \to \exists X\, \forall a\in\dom(\prec)\,\forall x\, \big(x\in X_{a}\leftrightarrow \varphi(x, X_{<a})\big) \Big),\]
where $\varphi$ is an arithmetical formula, i.e., a formula without second-order quantifiers, possibly with free variables. Here,
$X_a = X \cap \{a\}\times\mathbb{N}$
is the $a$th slice of $X$, and $X_{<a}$ is defined by $X_{<a} = \{(b,m) \in X: b \prec a\}$. For further background on this and Reverse Mathematics in general, we refer the reader to Simpson \cite{Simpson}. 

Below, we shall generally work in $\atr$, although at various points it will be necessary to prove lemmata in weaker theories. For the sake of completeness, let us recall the definitions of the theories which will appear below. When speaking of classes of formulas such as $\SIGMA^1_1$, we use the boldface notation to emphasize that free second-order variables are allowed, in contrast with the lightface notation $\Sigma^1_1$ which does not allow these.
\begin{enumerate}
\item $\rca$ is the base theory of \textit{Recursive Comprehension}, consisting of $Q$ together with the induction schema for $\SIGMA^0_1$ formulas, and the $\DELTA^0_1$-comprehension schema,
\[\forall x\, (\phi(x) \leftrightarrow  \lnot \psi(x)) \to \exists X\, \forall x\, (x \in X \leftrightarrow \phi(x)),\]
where $\phi$ and $\psi$ are $\SIGMA^0_1$-formulas. 
\item $\mathsf{WKL}_0$ is \textit{Weak K\"onig's Lemma}, extending $\rca$ with the axiom asserting that every infinite binary tree $T\subset 2^{<\mathbb{N}}$ has a(n infinite) branch.
\item $\aca$ is \textit{Arithmetical Comprehension}, extending $\rca$ with the comprehension schema
\[\exists X\, \forall x\, (x\in X\leftrightarrow \phi(x))\]
 for arithmetical formulas.
\item $\PI^1_1{-}\mathsf{CA}_0$ is $\PI^1_1$-Comprehension, extending $\rca$ with the comprehension schema for $\PI^1_1$-formulas.
\end{enumerate}

\subsection{Models of set theory}
The theory $\atr$ allows us to represent models of set theory in second-order arithmetic via so-called \textit{suitable trees} $T \subset \mathbb{N}^{< \mathbb{N}}$. These trees use the edge relation to represent membersip between transitive sets. $\atr$ provides a meaningful interpretation of these \textit{suitable trees} as sets, a translation $|\phi| \in \mathrm{L}_2$ for any sentence $\phi \in \mathrm{L}_{\mathrm{set}}$ and a theory $\atr^{\mathrm{set}}$ so that $\atr^{\mathrm{set}} \vdash \phi$ if and only if $\atr \vdash |\phi|$. We refer the reader to Simpson~\cite[VII.3]{Simpson} for details. 
In what follows we shall move back and forth through this translation freely without mention and identify $\atr^{\mathrm{set}}$ with $\atr$. When speaking explicitly of models of arithmetic, we generally attempt to use lowercase variables to refer to first-order objects (i.e., numbers) and uppercase variables to refer to second-order objects (i.e., sets of numbers).

Throughout, we work implicitly in some model $V$ of $\atr$ and make use of other models coded by sets $M \in V$. We say that $M$ is an \textit{$\omega$-model} if $\mathbb{N}^M$ (the set of natural numbers of $M$)  is equal to $\mathbb{N}$ ($=\mathbb{N}^V$). We sometimes write $V\models\phi$ to emphasize that we are concerned with the truth value of the formula $\phi$ in $V$ and not in some other model.

We work with nonstandard models of set theory. If $M$ is such a model, we use Greek letters $\alpha$, $\beta$, $\gamma$, etc.~for $M$-ordinals which are wellfounded, and Latin letters $a,b,c,$ etc.~ for $M$-ordinals which might be illfounded.

If $M$ is a model of $V = L$, then $M$ believes that it satisfies the Axiom of Global Choice, as witnessed by a $\Sigma_1$-wellordering of the universe. This wellordering is defined by the same formula in all models of $V = L$ and is given by G\"odel's proof that the Axiom of Choice holds in $L$; we denote it by $<_M$. If $M$ is illfounded, then $<_M$ will be an illfounded order, however. Nonetheless, if $M$ satisfies the schema of \textsc{foundation}, $<_M$ will have no ``simple'' infinite descending sequences, that is, none which exist as sets in $M$.

\subsection{Wellorders} 
We use $\wo$ ambiguously for the class of \textit{recursive} subsets of $\mathbb{N}$ which code wellorders, as well as for the set of indices of such recursive sets (field of the wellfounded relation on $\mathbb{N}\times \mathbb{N}$).
Note in particular that this deviates from common usage of $\wo$, which typically refers to the class of \textit{all} wellordered relations on $\mathbb{N}$. Nonetheless, we will need to consider the effective version of $\wo$ for a more careful analysis.
The relativization of this class to a real parameter $X$ is denoted $\wo^X$. The supremum of lengths of elements of $\wo^X$ is denoted by $\omega_1^X$. Typically, one writes $\omega_1^{ck}$ (for Church-Kleene) instead of $\omega_1^0$. Note, however, that $\atr$ is not powerful enough to prove the existence of $\omega_1^{ck}$.

\subsection{Measure theory and Borel sets}
We denote by $C[0,1]$ the separable Banach space of all continuous functions on the unit interval. 
The Lebesgue measure of a basic open set $(a,b)$ is defined as $b-a$. This is then extended to arbitrary Borel sets using the Carath\'eodory extension theorem (see e.g., Halmos \cite{Ha74}).

Borel subsets of $[0,1]$ are identified with \textit{Borel codes}. These are well-founded trees $T \subset \mathbb{N}^{<\mathbb{N}}$ satisfying the following:  
\begin{enumerate}
  \item The leaf nodes are labelled with basic open sets;
  \item A node $\sigma$ represents the (possibly infinite) union of the complements of every set represented by $\sigma^{\smallfrown}n$ for $n \in \mathbb{N}$.
\end{enumerate}

Working in $\atr$, we can speak of Borel sets through their codes.
For such a code $T$, $\mathsf{ATR}_0$ guarantees the existence of a unique evaluation map $f_T$ interpreting the tree, giving rise to a formula $\phi_T \in \mathrm{L}_2$ so that $\phi_T(X)$ holds whenever $X$ belongs to the Borel set. $\mathsf{ATR}_0$ is powerful enough to prove that Borel codes behave as one would expect.

Note that a given code may have different interpretations in different models of set theory (or of second-order arithmetic), even if they have the same set of natural numbers. We use the notation $c_A$ ambiguously to denote some tree corresponding to a Borel set $A$. We write ``$X \in A$'' for $X \subset 2^{\mathbb{N}}$ to say that the evaluation function for $c_A$ at $X$ witnesses $X$ to be an element of $A$. This is technically an abuse of notation, as there might be several codes for the same set, but this shall cause no problems in what follows.

We use $\lambda^*$ and $\lambda_*$ to denote the inner and outer Lebesgue measures. For arbitrary sets $A$ such that $\lambda^*(A) = \lambda_*(A)$, we denote this common value by $\lambda(A)$.
The theory of these operations was developed in $\rca$ by Yu~\cite{Yu93}. Indeed, it is shown in \cite{Yu93} that the existence of  $\lambda(A)$ for open and closed $A$ (as real numbers) relies on $\aca$, as well as on $\atr$ for arbitrary Borel sets.  A set $A$ is said to have \textit{measure zero} if $\lambda^*(A) = 0$. We shall need the following two lemmata:
\begin{lemma} \label{LemmaMeasureZeroSigma2}
Assume $\mathsf{WWKL}_0$. Suppose $A$ and $B$ are $\SIGMA^0_2$ subsets of $[0,1]$. Then, the formula $\lambda(A\setminus B) = 0$ is arithmetical.
\end{lemma}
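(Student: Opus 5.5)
We reduce the statement to a countable family of assertions of the form ``a closed set has measure zero'' and show each of these is arithmetical. Write $A=\bigcup_n C_n$ and $B=\bigcup_m D_m$ with each $C_n, D_m$ closed, given by codes. Then $A\setminus B=\bigcup_n (C_n\setminus B)$. By countable subadditivity of outer measure (available in $\rca$ for the relevant notion), $\lambda(A\setminus B)=0$ holds iff $\lambda(C_n\setminus B)=0$ for every $n$. So it suffices to show that, uniformly in $n$, ``$\lambda(C\setminus B)=0$'' is arithmetical for a closed $C$ and a $\SIGMA^0_2$ set $B$.

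Now $C\setminus B=C\cap\bigcap_m U_m$, where $U_m=[0,1]\setminus D_m$ is open. This is a $\PI^0_2$ set, i.e.\ a $G_\delta$. Replacing $U_m$ by $C_m' := \bigcap_{k\le m}U_k$ (still open, uniformly), the set is $C\cap\bigcap_m U'_m$ with $U'_m$ decreasing.

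The key claim is that $\lambda(C\cap \bigcap_m U'_m)=0$ iff for every rational $\epsilon>0$ there exist $m$ and a finite union $W$ of rational intervals of total length $<\epsilon$ such that $C\cap U'_m\subseteq W$ in an appropriate sense. Since $U'_m$ is not closed, we cannot state this directly. Instead we use inner approximation: write $U'_m=\bigcup_j I_{m,j}$ with rational intervals. The intended arithmetical formula is that for all $\epsilon$ there is $m$ such that for all finite $F$, the closed set $C\cap\bigcup_{j\in F}\overline{I_{m,j}}$ is covered by an open set of measure $<\epsilon$.

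Covering a closed set $K$ by a finite union of rational intervals of measure $<\epsilon$ is where $\mathsf{WWKL}_0$ enters. In $\rca$ plus compactness (Heine--Borel for closed subsets of $[0,1]$ follows from $\mathsf{WKL}_0$, but we only have $\mathsf{WWKL}_0$), the condition ``$\lambda(K)<\epsilon$'' for a closed $K$ given by a tree $T$ of its binary expansions should be expressed by a $\Sigma^0_2$/$\Pi^0_2$ formula. For example, $\lambda(K)\le\delta$ iff $\forall \ell\ (|T\cap 2^\ell|/2^\ell\le \delta)$. This is the standard $\mathsf{WWKL}_0$ fact that the measure of a closed set equals the limit of the densities of the levels of its tree. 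Indeed, $\mathsf{WWKL}_0$ is exactly the statement that a tree with positive limiting density has a path, which is what makes the closed set of level-density $>0$ have positive outer measure. So the plan is to translate every statement about measure zero into statements about level densities of trees, which are arithmetical.

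The main obstacle is the step from the $G_\delta$ set to closed approximations: showing that $\lambda^*(C\cap\bigcap_m U'_m)=0$ iff $\inf_m \lambda(C\cap U'_m)=0$. Here $\lambda(C\cap U'_m)$ is the supremum over $F$ of the densities of closed pieces, and we need the continuity of measure from above for the decreasing sequence of sets $C\cap U'_m$. One direction is immediate from monotonicity. For the other direction, if every $C\cap U'_m$ has measure $\ge\epsilon$, we must show that the intersection has positive outer measure. I would attempt this by building, arithmetically in the codes, a nested sequence of closed sets $K_m\subseteq C\cap U'_m$ of measure $\ge\epsilon/2$, losing at most $\epsilon 2^{-m-2}$ at each stage. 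Their intersection is then a closed set contained in $C\setminus B$, and it still has density bounded below. Applying $\mathsf{WWKL}_0$ in the form ``closed sets of positive density have positive measure'' gives $\lambda^*(C\setminus B)>0$. If constructing the $K_m$ requires more than $\rca$, I would instead only verify the equivalence of the arithmetical formula with the measure-zero statement, reasoning pointwise about codes rather than building the sets.
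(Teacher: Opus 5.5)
Your proposal is correct in outline, with one slip noted at the end, but it takes a heavier route than the paper.

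Your first step, reducing to $\lambda(C_n\setminus B)=0$ for each closed piece $C_n$ of $A$, matches the paper's reduction from $A$ to its closed pieces $A_i$. The routes diverge after that. The paper absorbs the closed set into the $G_\delta$: it treats $A_i\setminus B$ as a single $\PI^0_2$ set $\bigcap_k O_k$ with the $O_k$ open and decreasing, since a closed set is uniformly a decreasing intersection of open sets. So the only measures it ever evaluates are those of open sets, and these are arithmetical by disjointifying the intervals and summing lengths. The equivalence $\lambda(\bigcap_k O_k)=0 \leftrightarrow \forall n\,\exists k\,\lambda(O_k)<1/n$ is then continuity from above, which the paper takes from the $\sigma$-additivity available in $\mathsf{WWKL}_0$ (Simpson--Yu).

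Because you keep $C$ separate, you have to measure sets of the form $C\cap U'_m$. That costs you an extra layer: inner approximation by finite closed pieces, plus level densities of trees. You also prove continuity from above by hand, via the nested closed sets $K_m$ and the positive-density form of $\mathsf{WWKL}_0$. Your route makes it transparent exactly where $\mathsf{WWKL}_0$ enters. Its cost is that building the sequence $(K_m)$ requires approximating measures of closed$\cap$open sets, so it appears to need $\aca$ rather than $\mathsf{WWKL}_0$. This is harmless for the paper's applications, which take place in $\omega$-models of $\aca$ and in $\atr$. Simply citing $\sigma$-additivity, as the paper does, avoids both the issue and the $K_m$ construction.

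One correction. The level densities $|T\cap 2^\ell|/2^\ell$ are non-increasing in $\ell$ with infimum $\lambda([T])$, so your displayed criterion ``$\lambda(K)\le\delta$ iff $\forall \ell\,(|T\cap 2^\ell|/2^\ell\le\delta)$'' is false as stated: for nonempty $T$ the $\ell=0$ term is $1$. It should read $\lambda(K)<\delta \leftrightarrow \exists \ell\,(|T\cap 2^\ell|/2^\ell<\delta)$. Equivalently, $\lambda(K)\ge\delta\leftrightarrow\forall\ell\,(|T\cap 2^\ell|/2^\ell\ge\delta)$, where the right-to-left direction is where $\mathsf{WWKL}_0$ is used; this is the form your final step actually relies on. Either version is arithmetical, so your conclusion is unaffected.
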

\proof
Without defining $\mathsf{WWKL}_0$, we use the fact that this theory is able to prove the $\sigma$-additivity of the Lebesgue measure on arithmetical sets, by a theorem of Simpson and Yu \cite{SY90}.
Let us first show that there is an arithmetical formula $\psi(c_U, n)$ expressing $\lambda(U) < 1/n$ whenever $c_U$ is a code for an open set $U$.

Write $U = \bigcup_{i\in\mathbb{N}} (a_i,b_i)$ as a union of basic open sets provided by $c_U$. From this representation, we can obtain a possibly different Borel code of $U$ as a union of disjoint basic open intervals $(a_i', b_i')$ and this can be done in a recursive way, uniformly for all codes $c_U$. But then
\[\lambda(U) = \sum_{i = 0}^\infty b_i' - a_i'\]
(using $\mathsf{WWKL}_0$),
from which the claim follows. 

Now to prove the lemma, let $A$ be $\SIGMA^0_2$ and $B$ be $\PI^0_2$ and write $A = \bigcup_{i\in\mathbb{N}} A_i$  with each $A_i$ closed and increasing in $i$. Then,
\[\lambda(A\cap B) = \lambda\Big(\bigcup_{i\in\mathbb{N}}A_i \cap B\Big) = \lambda\Big(\bigcup_{i\in\mathbb{N}}(A_i \cap B)\Big) = \lim_{i \to \infty} \lambda(A_i\cap B).\]
As each set $A_i \cap B$ is $\PI^0_2$, it suffices to verify that the formula $\lambda(B) = 0$ is arithmetical when $B$ is $\PI^0_2$ (this is once more by $\mathsf{WWKL}_0$). 

$B$ be $\PI^0_2$ and write  $B = \bigcap_{i\in\mathbb{N}} B_i$ with the sets $B_i$ open and decreasing in $i$. Then, $\lambda(B) = 0$ if and only if for all $n\in\mathbb{N}$ there is $i$ such that $\lambda(B_i) < 1/n$. This latter formula is arithmetical by the initial claim, from which the lemma follows.
\endproof

\begin{lemma}\label{LemmaEmptyClosedSets}
Assume $\mathsf{WKL}_0$. Suppose $A\subset [0,1]$ is $\SIGMA^0_2$. Then, the formula $A = \varnothing$ is arithmetical.
\end{lemma}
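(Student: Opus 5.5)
Since $A$ is $\SIGMA^0_2$, the plan is to write it as $A = \bigcup_{i\in\mathbb{N}} C_i$, where each $C_i \subset [0,1]$ is closed. We can arrange that the codes for the $C_i$ are obtained uniformly and recursively from the code for $A$. Then $A = \varnothing$ holds exactly when $\forall i\, (C_i = \varnothing)$. So it suffices to find an arithmetical (indeed $\SIGMA^0_1$) formula $\theta(c,i)$ that, provably in $\mathsf{WKL}_0$, expresses $C_i = \varnothing$. Once we have it, $\forall i\,\theta(c,i)$ is arithmetical.

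For a single closed set $C = [0,1]\setminus \bigcup_{j}(a_j,b_j)$, with rational endpoints listed by the code, the candidate formula is
\[\theta(C):\quad \exists n\ \Big([0,1] \subset \bigcup_{j<n} (a_j,b_j)\Big).\]
The bracketed condition concerns finitely many rational intervals, so it is decidable and the formula is $\SIGMA^0_1$. The right-to-left direction is trivial: if finitely many intervals cover $[0,1]$, then $C$ is empty, already in $\rca$. The left-to-right direction is the Heine--Borel compactness of $[0,1]$ for countable covers, and this is exactly where $\mathsf{WKL}_0$ is used, since $\mathsf{WKL}_0$ proves it (Simpson, IV.1).

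If one prefers to work in Cantor space, or with closed sets given as sets of paths through trees, the same argument runs through trees instead. We would code $C_i$ as the set of paths through a binary tree $T_i$ that is $\DELTA^0_1$ in the code (or at worst arithmetical in it). Then $C_i = \varnothing$ iff $T_i$ is finite, by $\mathsf{WKL}_0$ in one direction and trivially in the other. Finiteness, $\exists \ell\,\forall \sigma\in 2^{\ell}\,(\sigma\notin T_i)$, is arithmetical.

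The main obstacle is the bookkeeping in passing between representations of reals in $[0,1]$ and points of $2^{\mathbb{N}}$ or binary expansions. We must check that the translation from the $\SIGMA^0_2$ code to the closed pieces (and their trees or covering intervals) is effective enough that the resulting formula stays arithmetical. We must also check that the Heine--Borel step applies to each $C_i$ inside $\mathsf{WKL}_0$ and does not need a uniform appeal requiring more induction. Neither point should be a real problem: Heine--Borel is applied to each $i$ separately inside the universal quantifier, and the equivalence is then universally closed.
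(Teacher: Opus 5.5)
Your proposal is correct and follows the paper's approach. You reduce to the closed pieces and use $\mathsf{WKL}_0$-compactness of $[0,1]$ to make emptiness of each piece arithmetical; the only difference is that the paper builds a binary tree of nested rational intervals and shows $C\neq\varnothing$ iff that tree is infinite, while you cite the Heine--Borel covering lemma directly, which also gives a slightly sharper $\Sigma^0_1$ condition per piece.
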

\proof
Since $A$ is $\SIGMA^0_2$, we can write $A  = \bigcup_{i\in\mathbb{N}} A_i$ for some sequence of closed sets $A_i$, and $A$ is empty if and only if each $A_i$ is empty.

It thus suffices to consider the formula $C = \varnothing$ for closed codes $c_C$. Write $C = \bigcap_{i\in\mathbb{N}} C_i$ as an intersection of basic closed sets of one of the forms $[0,a_i] \cup [b_i, 1]$, $[0,a_i]$, or $[b_i,1]$. Let $T$ be the tree obtained as follows: nodes $p \in T$ are intervals $[c_p, d_p]$, with the root being the unit interval $[0,1]$. Letting $i$ be the height of $p$, we put $q$ above $p$ in $T$ if $q$ is a nonempty interval of one of the forms
\[p \cap [0,a_{i+1}] \text{ or } p \cap [b_{i+1}, 1],\]
where $a_{i+1}$ and $b_{i+1}$ are obtained from $C_{i+1}$ as above. Since $\mathsf{WKL}_0$ holds, the intersection $C = \bigcap_{i\in\mathbb{N}}$ is nonempty if and only if there is $T$ is illfounded, and again by $\mathsf{WKL}_0$ this holds if and only if $T$ is infinite. Clearly, $T$ is arithmetically definable from the code of $C$ and the assertion that $T$ is infinite is $\Pi^0_2(T)$.
\endproof

\section{Reversals} \label{SectReversals}
Towards establishing the theorems in the introduction, we begin with the two reversals.
We will use the following characterization of induction.
\begin{lemma}\label{LemmaInductionComprehension}
The following are equivalent over $\rca$:
\begin{enumerate}
\item\label{LemmaInductionComprehension1} $\SIGMA^1_1$-Induction,
\item\label{LemmaInductionComprehension2} Finite $\SIGMA^1_1$-Comprehension, i.e., for any $\SIGMA^1_1$ formula $\phi$ and for all $n\in\mathbb{N}$, there exists a set $X \subset \mathbb{N}$ such that 
\[\forall m\leq n\, \big(m \in X\leftrightarrow \phi(m)\big).\]
\end{enumerate}
\end{lemma}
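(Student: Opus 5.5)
We prove the two implications separately; both follow the classical argument that $\SIGMA^0_1$-induction is equivalent to bounded $\SIGMA^0_1$-comprehension over $\rca$, lifted one level up in the analytical hierarchy.

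For (\ref{LemmaInductionComprehension1})$\Rightarrow$(\ref{LemmaInductionComprehension2}), fix a $\SIGMA^1_1$ formula $\phi(m)$ and $n\in\mathbb{N}$. Finite sets are coded by numbers, so we may consider
\[\theta(k) \;:\equiv\; \exists s\, \Big(|s| \geq k \wedge s\subset\{0,\dots,n\} \wedge \forall m\in s\, \phi(m)\Big),\]
where $s$ ranges over codes of finite sets. Since $\phi$ is $\SIGMA^1_1$, the bounded quantifier $\forall m\in s$ can be absorbed: in $\rca$ a statement $\forall m\in s\,\exists Y\,\psi(m,Y)$ with $\psi$ arithmetical is equivalent to $\exists Y\,\forall m\in s\,\psi(m,(Y)_m)$, obtained by pairing the finitely many witnesses into one set (this uses only $\SIGMA^0_1$-induction on $|s|$ to build the join). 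Thus $\theta$ is $\SIGMA^1_1$. Now $\theta(0)$ holds and $\theta(k)$ fails for $k>n+1$. By the least number principle for $\PI^1_1$ formulas, which is equivalent to $\SIGMA^1_1$-induction, there is a least $k$ with $\neg\theta(k)$; hence there is a largest $k_0$ with $\theta(k_0)$, witnessed by some $s$. We claim $X=s$ works. Clearly $m\in s$ implies $\phi(m)$. Conversely, if $m\leq n$, $m\notin s$, and $\phi(m)$, then $s\cup\{m\}$ witnesses $\theta(k_0+1)$, contradicting maximality. So $\forall m\leq n\,(m\in X\leftrightarrow \phi(m))$, and $X$ exists as a finite set in $\rca$.

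For (\ref{LemmaInductionComprehension2})$\Rightarrow$(\ref{LemmaInductionComprehension1}), suppose $\phi(0)$ and $\forall x\,(\phi(x)\to\phi(x+1))$, but $\neg\phi(n)$ for some $n$, with $\phi\in\SIGMA^1_1$. Apply finite $\SIGMA^1_1$-comprehension to obtain $X$ with $\forall m\leq n\,(m\in X\leftrightarrow\phi(m))$. Then $0\in X$, and for $m<n$, $m\in X$ implies $m+1\in X$. The set induction axiom applied to $X\cup\{m: m>n\}$, which exists by $\DELTA^0_1$-comprehension, gives $n\in X$, hence $\phi(n)$, a contradiction.

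The only delicate point is the first direction, namely checking that $\theta$ is genuinely $\SIGMA^1_1$ (the finite collection of witnesses) and that the maximality argument is carried out with the $\PI^1_1$ least number principle rather than something stronger. That principle follows from $\SIGMA^1_1$-induction by the standard argument: apply induction to the $\SIGMA^1_1$ formula ``$\forall j\leq k\,\theta(j)$ fails is false'', suitably rephrased as ``there is no $j\leq k$ with $\neg\theta(j)$'' using bounded collection for $\SIGMA^1_1$. That bounded collection comes from the same witness-pairing trick.
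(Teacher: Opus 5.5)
Your argument is the paper's argument: a counting formula over subsets of $\{0,\dots,n\}$, a $k_0$ with $\theta(k_0)\wedge\neg\theta(k_0+1)$, and adding one more element to contradict maximality. Your (2)$\Rightarrow$(1) argument is also fine. One justification is wrong, however, and it is the step you yourself flagged as delicate.

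The passage from $\forall m\in s\,\exists Y\,\psi(m,Y)$ to $\exists Y\,\forall m\in s\,\psi(m,(Y)_m)$ is not an $\rca$ fact. The induction that builds the join runs on $j$ for the statement $\exists Y\,\forall i<j\,\psi(s_i,(Y)_{s_i})$, and that statement is $\SIGMA^1_1$, not $\SIGMA^0_1$. For nonstandard $|s|$ this finite choice principle genuinely fails in some models of $\rca$. For example, take the $\Delta_1$-definable sets of a model of $\mathrm{I}\Sigma_1+\neg\mathrm{B}\Sigma_2$, and let $\psi(m,Y)$ say that $Y=\{y\}$ for some $y$ with $\chi(m,y)$, where $\chi\in\Pi_1$ witnesses the failure of collection on $[0,k)$. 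A uniform $Y$ would yield a bound on the $y_m$, by $\mathrm{B}\Sigma_1$.

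In the direction (1)$\Rightarrow$(2) this does no harm, because $\SIGMA^1_1$-induction is your hypothesis there and it proves the needed bounded choice. The cleaner route, which the paper takes, is to build a single paired witness into the formula: $\exists Y\,\exists s\,\big(|s|\ge k\wedge s\subseteq\{0,\dots,n\}\wedge\forall m\in s\,\psi(m,Y_m)\big)$. This is syntactically $\SIGMA^1_1$, and the maximality step then simply overwrites the $m$th slice of $Y$ with a witness for $\phi(m)$.

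The detour through the $\PI^1_1$ least number principle, and the bounded collection you invoke to derive it, is also unnecessary. Since $\theta(0)$ and $\neg\theta(n+2)$ hold, applying $\SIGMA^1_1$-induction directly to $\theta$ gives some $k_0$ with $\theta(k_0)\wedge\neg\theta(k_0+1)$, which is all the maximality argument uses. Moreover $\theta$ is monotone, so $\forall j\le k\,\theta(j)$ is just $\theta(k)$, and no collection would be needed there anyway.
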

\proof
We reason within $\rca$.
It is easy to see \eqref{LemmaInductionComprehension2} implies \eqref{LemmaInductionComprehension1}. We show \eqref{LemmaInductionComprehension1} implies \eqref{LemmaInductionComprehension2}.
Let $\phi(m)\equiv \exists Z\theta(m,Z)$ be a $\SIGMA^{1}_{1}$-formula, and let $n\in\mathbb{N}$.
Consider the following $\SIGMA^{1}_{1}$-formula:
\[\psi(k)\equiv \exists Y\exists X\subseteq\{0,\dots,n\}(\forall m\in X \theta(m,Y_{m})\wedge k\leq |X|)\]
where $Y_{m}=\{x: (x,m)\in Y\}$ and $|X|$ denotes the cardinality of $X$.
It is easy to check that $\psi(0)\wedge \lnot\psi(n+1)$ holds.
Thus, by $\SIGMA^1_1$-Induction, there exists $k_{0}\leq n$ such that $\psi(k_{0})\wedge \neg\psi(k_{0}+1)$.
Take $Z$ and $X$ for $\psi(k_{0})$, then we see that this $X$ is the desired set. Indeed, if $m_{0}\notin X$ and $\exists Z\theta(m_{0},Z)$ for some $m_{0}\leq n$, then $\tilde Y$ defined by $\tilde Y_{m_{0}}=Z$ and $\tilde Y_{j}=Y_{j}$ for $j\neq m_{0}$ and $X\cup \{m_{0}\}$ witness $\psi(k_{0}+1)$.
\endproof

\begin{lemma}\label{LemmaReversalInd}
The following is provable within $\aca$.
Suppose that all analytic sets are Lebesgue-regular. Then, $\SIGMA^1_1$-Induction holds.
\end{lemma}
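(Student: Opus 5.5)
We argue contrapositively within $\aca$ and use Lemma \ref{LemmaInductionComprehension}. Assume finite $\SIGMA^1_1$-comprehension fails: there are a $\SIGMA^1_1$ formula $\phi(m)\equiv\exists Z\,\theta(m,Z)$, with $\theta$ arithmetical (normalizing via $\aca$), and an $n$ such that no $X\subset\{0,\dots,n\}$ satisfies $\forall m\le n\,(m\in X\leftrightarrow\phi(m))$. We build an analytic set $A\subset[0,1]$ whose inner and outer measures differ.

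The construction: split $[0,1]$ into $n+1$ disjoint closed subintervals $I_0,\dots,I_n$ of length $1/(n+1)$ (half-open to avoid endpoint overlap), and set
\[A=\bigcup_{m\le n}\{x\in I_m : \exists Z\,\theta(m,Z)\}.\]
So $A$ is the union of those $I_m$ with $\phi(m)$. It is analytic: it is the projection of the arithmetical (hence Borel, coded in $\aca$ since $\theta$ has fixed arithmetical complexity) set $\{(x,Z): x\in I_m,\ m\le n,\ \theta(m,Z)\}$. If formalizing analytic sets as continuous images of Borel sets is preferred, we use instead the continuous map sending $(m,Z,t)$ with $\theta(m,Z)$ to the point of $I_m$ coded by $t$.

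Now compare the measures. For any open $B\supseteq A$ and any $m$ with $\phi(m)$, $B$ covers $I_m$, so $\lambda(B)\ge|\{m\le n:\phi(m)\}|/(n+1)$ in the informal sense; conversely any compact $K\subseteq A$ lies within the union of the true $I_m$. Regularity gives a single real $r=\lambda^*(A)=\lambda_*(A)$. The key step is to show that $r$ must equal $k/(n+1)$ with $k$ the number of $m\le n$ such that $\phi(m)$, and then to recover the set $X=\{m\le n:\phi(m)\}$. For this we use bounded search. Take a compact $K\subseteq A$ with $\lambda(K)>r-1/(2(n+1))$ and an open $B\supseteq A$ with $\lambda(B)<r+1/(2(n+1))$; both exist by the definitions of sup and inf. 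Then set $X=\{m\le n:\lambda(K\cap I_m)>0\}$, which exists by $\aca$ since measures of closed sets are available there. If $m\in X$ then $K\cap I_m\neq\varnothing$ and $K\subseteq A$, so $\phi(m)$. If $\phi(m)$ but $m\notin X$, then $I_m\subseteq B$ while $K$ misses $I_m$ up to measure zero. Hence $\lambda(B)-\lambda(K)\ge 1/(n+1)$, contradicting the choice of $B$ and $K$. So $X$ witnesses finite comprehension, a contradiction. This argument does not need to name $k$.

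The main obstacle is bookkeeping in $\aca$ rather than $\atr$. We must check that the open/compact measure comparisons are valid there: additivity of $\lambda$ on closed and open sets, and $\lambda(K)\le\lambda(B)$ with a gap of at least $1/(n+1)$ when $I_m\subseteq B\setminus K$ modulo null sets. These hold in $\aca$ by Yu's development. We must also confirm that the definition of ``analytic'' used in the paper admits the projection above.
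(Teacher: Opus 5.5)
Your proof is correct, but it reaches finite $\SIGMA^1_1$-comprehension by a different construction than the paper's.

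\textbf{The paper's route.} The paper uses the ``interval-like'' set
$A=\{x:\exists Y\,\exists X\subseteq\{0,\dots,n\}\,(\forall i\in X\,\theta(i,Y_i)\wedge 0\le x\le\sum_{i\in X}2^{-2i-1})\}$.
Its sparse encoding lets the numbers $\lambda(G),\lambda(H)$ (for compact $G\subseteq A\subseteq H$ open with small gap) pin down a point $\alpha\in A$ with $\alpha+2^{-2n-2}\notin A$. The desired finite set $X$, together with one joint witness $Y$, is then read off the existential witness for $\alpha\in A$. Maximality holds because adding a missing $i$ would put $\alpha+2^{-2i-1}$ into $A$.

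\textbf{Your route.} You use disjoint blocks $I_m$ and read $X$ off the structure of the compact approximation $K$, via arithmetical comprehension on $\lambda(K\cap\overline{I_m})>0$. The gap $\lambda(B)-\lambda(K)<1/(n+1)$ rules out a true $\phi(m)$ whose block $K$ misses in measure.

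\textbf{Comparison.} Your version avoids the gap-encoding and the choice of $\alpha$, and is arguably more transparent. The paper's version looks only at two real numbers, not at the internal structure of $G$. It is also set up so that, with $X\subseteq\mathbb{N}$ in place of $\{0,\dots,n\}$, the same set gives the measurability reversal (Lemma \ref{LemmaReversalComp}). There a single real $\lambda(A)$ decodes all of $\{i:\phi(i)\}$ by primitive recursion.

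\textbf{One correction.} In this paper, regularity does \emph{not} hand you a real $r=\lambda^*(A)=\lambda_*(A)$. The existence of that value is exactly measurability, which is equivalent to $\PI^1_1{-}\mathsf{CA}_0$ over $\atr$. Regularity only gives, for each $\varepsilon>0$, a compact $K\subseteq A$ and an open $B\supseteq A$ with $\lambda(B)-\lambda(K)<\varepsilon$. Since you only ever use such a pair, delete $r$ and the argument stands.

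\textbf{Minor simplifications.}
\begin{enumerate}
\item Taking the $I_m$ to be pairwise separated closed intervals removes the endpoint bookkeeping with half-open intervals.
\item The contrapositive framing is unnecessary, since your argument directly produces the required $X$.
\end{enumerate}
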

\proof
The rough idea for the proof is to consider the interval $[0,\mathcal{O}]$, where $\mathcal{O}$ is Kleene's set of ordinal notations. This appears to be  a closed set, but a code for it cannot be produced without access to $\mathcal{O}$. Nonetheless, one can show in weak theories that it is a co-analytic set. Intuitively, knowing the measure of $[0,\mathcal{O}]$ amounts to computing $\mathcal{O}$, and we shall argue that the Lebesgue-regularity of this set implies lightface $\Sigma^1_1{-}\mathsf{IND}$, after which a relativization argument will yield the desired result. We do so in more detail now.

We reason within $\aca$.
By Lemma \ref{LemmaInductionComprehension}, it suffices to establish Finite $\SIGMA^1_1$-Comprehension.
Let $\phi(m)\equiv\exists Z\theta(m,Z)$ be a $\SIGMA^{1}_{1}$-formula, and let $n\in\mathbb{N}$.
Define an analytic set $A\subseteq [0,1]$ so that
\[ x\in A \leftrightarrow \exists Y\exists X\subseteq \{0,\dots,n\}\left(\forall i\in X \,\theta(i,Y_{i}) \wedge 0\leq x\leq \sum_{i\in X}2^{-2i-1}\right).\]
Note that $A$ is ``interval-like'', in the sense that $[0,x]\subseteq A$ whenever $x \in A$.
By the regularity of $A$, take a compact set $G$ and an open set $H\subseteq [0,1]$ such that $G\subseteq A\subseteq H$ and $|\lambda (G)-\lambda(H)|<2^{-2n-3}$.
Put $\alpha=\max\{0,\lambda(G)-2^{-2n-4}\}$, then, we have $\alpha\in A$ and $\alpha+2^{-2n-2}\notin A$.
(If $\alpha\notin A$, then $[0,\alpha)\supseteq A\supseteq G$, which contradicts $\alpha<\lambda(G)$. Similarly, if $\alpha+2^{-2n-2}\in A$, then $[0,\alpha+2^{-2n-2}]\subseteq A\subseteq H$, which contradicts $\lambda(H)<\lambda(G)+2^{-2n-3}=\alpha+2^{-2n-4}+2^{-2n-3}$.)
Since $\alpha\in A$, there exist a set $Y\subseteq\mathbb{N}$ and a finite set $X\subseteq\{0,\dots,n\}$ such that $\forall i\in X \,\theta(i,Y_{i})$ and $\alpha\leq \sum_{i\in X}2^{-2i-1}$.
We see that $\forall i\leq n(i\in X\leftrightarrow \exists Z\theta(i,Z))$.
We only need to show the right-to-left implication. Assume that $i\notin X$, but $\theta(i,Z)$ for some $i\leq n$ and $Z$.
Define $\tilde X$ and $\Tilde Y$ to be $\tilde X=X\cup\{i\}$, and $\tilde Y_{i}=Z$ and $\tilde Y_{j}=Y_{j}$ if $j\neq i$.
Then $\alpha+2^{-2i-1}\in A$ witnessed by $\tilde X$ and $\tilde Y$, but this contradicts $\alpha+2^{-2n-2}\notin A$.
\endproof

\begin{lemma}\label{LemmaReversalComp}
The following is provable within $\aca$.
Suppose that all analytic sets are Lebesgue-measurable. Then, $\PI^1_1$-Comprehension holds.
\end{lemma}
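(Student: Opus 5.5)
Over $\aca$, $\PI^1_1$-Comprehension is equivalent to $\SIGMA^1_1$-Comprehension. So it suffices to show: for every $\SIGMA^1_1$ formula $\phi(m)\equiv\exists Z\,\theta(m,Z)$, possibly with set parameters, the set $\{m : \phi(m)\}$ exists. The plan is to encode the whole infinite set into one interval-like analytic set, as in the proof of Lemma \ref{LemmaReversalInd}, and read it off the binary expansion of the measure. We define $A\subseteq[0,1]$ by
\[ x\in A \leftrightarrow \exists Y\,\exists X\subseteq \mathbb{N}\Big(\forall i\in X\,\theta(i,Y_i)\wedge 0\le x\le \sum_{i\in X}2^{-2i-1}\Big).\]
This is $\SIGMA^1_1$, hence analytic. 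The sum exists in $\aca$ as a real, and the spacing $2^{-2i-1}$ leaves a zero digit between coded positions. Hence $\sum_{i\in X}2^{-2i-1}$ determines $X$ arithmetically, with no carrying ambiguity. $A$ is again interval-like: $[0,x]\subseteq A$ whenever $x\in A$.

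By measurability, $r=\lambda(A)$ exists as a real number. We aim to show that $A=[0,s]$ with $s=\sum_{i\in S}2^{-2i-1}$, where $S=\{i:\phi(i)\}$, though we cannot yet form $S$.

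The key step is the claim that for each $i$, $\phi(i)$ holds iff the $(2i+1)$-st binary digit of $r$ is $1$. Since $r$ is given as a Cauchy sequence with modulus, $\aca$ then computes its digits. The digits are unambiguous, since $r$ has zeros at even positions and so is not a dyadic rational with two expansions. This yields $S$ by arithmetical comprehension.

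To prove the claim without having $S$, we work with finite approximations, pairing upper and lower bounds on $r$:
\begin{itemize}
\item \emph{Lower bound.} If $\phi(i)$ holds for $i$ in some finite set $F$, then combining witnesses gives $\sum_{i\in F}2^{-2i-1}\in A$, so $r\ge\sum_{i\in F}2^{-2i-1}$.
\item \emph{Upper bound.} Every $x\in A$ satisfies $x\le\sum_{i\in X}2^{-2i-1}$ for some $X\subseteq S$.
\end{itemize}
Now fix $n$. By Lemma \ref{LemmaInductionComprehension}, or directly from $\SIGMA^1_1$-Induction (which follows from Lemma \ref{LemmaReversalInd}, since measurable sets are in particular regular), the finite set $S_n=S\cap\{0,\dots,n\}$ exists. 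We then bound $r$ on both sides:
\[ \sum_{i\in S_n}2^{-2i-1}\;\le\; r\;\le\; \sum_{i\in S_n}2^{-2i-1}+\sum_{i>n}2^{-2i-1}\;<\;\sum_{i\in S_n}2^{-2i-1}+2^{-2n-2}.\]
The upper bound holds because for any $x\in A$ and its witness $X$, we have $X\cap[0,n]\subseteq S_n$; it then passes to $\lambda^*(A)=r$ via open covers $[0,t)$. Consequently the first $2n+2$ binary digits of $r$ agree with those of $\sum_{i\in S_n}2^{-2i-1}$. At equality with the right endpoint the tail is strictly smaller, so no carry reaches digit $2n+1$. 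This gives the claim for all $i\le n$.

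The main obstacle is the bookkeeping of digit extraction when $r$ might be a dyadic rational; the factor-of-four spacing is chosen precisely so that the gap estimate handles it. We also need to check that $\lambda^*$ and $\lambda_*$ of interval-like sets are their suprema, which follows from compact subsets $[0,x]$ and open supersets $[0,x+\varepsilon)$.
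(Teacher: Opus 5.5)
Your argument is correct and is essentially the paper's proof. Both use the same interval-like set $A$, get $\SIGMA^1_1$-Induction from Lemma \ref{LemmaReversalInd} to supply the finite sets $S_n$ (with a joint witness $Y$, as produced in the proof of Lemma \ref{LemmaInductionComprehension}), and recover $\{m:\phi(m)\}$ from $\lambda(A)$. The paper does this by a greedy primitive recursion plus induction on $i$, while you extract digits from the bound $s_n\le r<s_n+2^{-2n-2}$ for each $n$ separately.

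One correction: $r$ can be a dyadic rational (e.g.\ $r=\tfrac12$ if $\phi$ holds only at $0$), so your remark that it is not is false. The half-open bound still settles the matter once you define the $k$-th digit as $\lfloor 2^k r\rfloor \bmod 2$, which is arithmetical in $r$.
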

\proof
We reason within $\aca$.
By Lemma \ref{LemmaReversalInd}, we may also assume $\SIGMA^1_1$-Induction.
Let $\phi(m)\equiv\exists Z\theta(m,Z)$ be a $\SIGMA^{1}_{1}$-formula.
Define an analytic set $A\subseteq [0,1]$ so that
\[ x\in A \leftrightarrow \exists Y\exists X\subseteq \mathbb{N}\left(\forall i\in X \,\theta(i,Y_{i}) \wedge 0\leq x\leq \sum_{i\in X}2^{-2i-1} \right).\]
By the measurability of $A$, let $\alpha=\lambda(A)$.
Using primitive recursion, we define a set $X$ by
\[i\in X \leftrightarrow \sum_{j\in X, j<i}2^{-2j-1}+2^{-2i-1}\le \alpha.\]
We see that $\forall i(i\in X\leftrightarrow \exists Z\theta(i,Z))$ by $\SIGMA^1_1$-Induction.
Let $i\in\mathbb{N}$ and assume that $j\in X\leftrightarrow \exists Z\theta(j,Z)$ for all $j<i$. Then by Finite $\SIGMA^1_1$-Comprehension, there exists $Y$ such that for all $j\in X$ with $j<i$, $\theta(j,Y_{j})$. Hence $\sum_{j\in X, j<i}2^{-2j-1}+2^{-2i-1}\le \alpha$ if and only if $\exists Z\theta(i,Z)$.
\endproof

What remains now are the proofs of Lebesgue regularity and Lebesgue measurability. We turn to these now.

\section{Proof of regularity}\label{SectRegularity}
We assume $\atr$ throughout. 
Let $A\in \Pi^1_1$ be a set of reals; our goal is to prove that $A$ is Lebesgue-regular. 
The proof will relativize to sets which are $\Pi^1_1$ relative to a real parameter.
The first step is to consider a non-standard model $M$ which will serve as a guide for the proof.

\subsection{The model $M$}\label{SectModelM}
We call Basic Set Theory ($\mathsf{BST}$) the set of axioms in the language of set theory consisting of \textsc{extensionality}, \textsc{pairing}, \textsc{union}, \textsc{infinity}, the scheme of $\Delta_0$-\textsc{separation}, and the scheme of \textsc{foundation} for all formulae of set theory. That is, $\kp \setminus \{\Delta_0$-\textsc{collection}$\}$. Moreover, we call ``$V=L$'' the axiom \[\forall \alpha \in \Ord \, \exists x \, (x = L_{\alpha}) \wedge \forall y\, \exists \alpha\in\Ord\, (y \in L_\alpha).\] 

If $M$ is an $\omega$--model of $\mathsf{BST}+$``$V = L$,'' then $M \cap \mathcal{P}(\mathbb{N}) \models \aca$ and $M$ is correct about arithmetical statements, including in particular those given by Lemma \ref{LemmaMeasureZeroSigma2} and Lemma \ref{LemmaEmptyClosedSets}.
$M$ might, however, contain some non-standard ordinals (externally ill-founded ordinals) and  we will in fact use this in our proof of analytic measurability. We shall make use of a particular $M$ below.

Recall Ville's lemma which asserts that the wellfounded part of any non-standard admissible set is admissible. This lemma might be vacuous over $\atr$, as there might not be any non-standard admissible sets and, even if there are, we might not be able to separate their wellfounded parts from their illfounded parts. 
The following result will serve as a proxy for Ville's lemma. 

Below, recall that $\wo$ is the set of \textit{recursive} wellorderings of $\mathbb{N}$. The reason for this choice of notation will become apparent later.

\begin{lemma}\label{LemmaNonstandardOrdinals}
There exists an $\omega$-model $M$ of $\mathsf{BST}$ satisfying $V = L$ and such that $M$ contains ordinals isomorphic to every element of $\wo$.
\end{lemma}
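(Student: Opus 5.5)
We prove the lemma by the pseudohierarchy method inside $\atr$. The idea is to show that the ``$V=L$ part'' of an illfounded $L$-hierarchy built along a recursive pseudo-wellorder gives a model of $\mathsf{BST}+V=L$ whose wellfounded part is long enough to contain every recursive wellorder.

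First, consider the arithmetical property $\Phi(\prec, H)$ of a linear order $\prec$ and a set $H$. It says that $H$ codes a sequence of $\omega$-models (suitable trees or codes) $\langle H_a : a\in\dom(\prec)\rangle$, where $H_a$ plays the role of $L_{\omega+a}$. Each stage is obtained from the earlier ones arithmetically: successors take the Gödel definable-power-set operation applied to the previous stage, and limits take unions. This is a standard arithmetical recursion, so for every wellorder $\prec$, $\atr$ proves $\exists H\,\Phi(\prec,H)$. The property $\Phi$ is arithmetical, and ``$\prec$ is a recursive linear order'' is arithmetical. So the class
\[\{e : e \text{ is a recursive linear order and } \exists H\,\Phi(e,H)\}\]
is $\Sigma^1_1$ and contains $\wo$.

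Now split into cases. If this class is exactly $\wo$, then $\wo$ is $\Sigma^1_1$, and $\omega_1^{ck}$ is handled as follows. In that case the set $\wo$ itself exists by $\Sigma^1_1$-separation in $\atr$. Using $\atr$ we can then build $L$ along the recursive sum of all recursive wellorders, an order of type $\omega_1^{ck}$, which is wellfounded. This yields a wellfounded model $L_{\omega_1^{ck}}$, which satisfies $\mathsf{BST}+V=L$ and contains all the required ordinals. The same approach works uniformly, since this is Harrison-style reasoning.

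Otherwise there is a recursive $e\notin\wo$ with some $H$ satisfying $\Phi(e,H)$. By the Kleene–Harrison style argument, such an $e$ may be chosen to extend every element of $\wo$ as an initial segment up to isomorphism. Concretely, one applies $\Sigma^1_1$ boundedness/overspill: the set of recursive linear orders $e$ such that $\exists H\,\Phi(e,H)$ and every element of $\wo$ embeds into an initial segment of $e$ is nonempty. Otherwise $\wo$ would be definable and bounded, giving the first case. The cleanest route is to take the $\Sigma^1_1$ pseudo-wellorder $\mathcal{H}$ (Harrison order), whose wellfounded initial part contains copies of all recursive wellorders. We then let $M$ be the union of the $H_a$, read as a model of set theory.

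Finally, verify the axioms. Extensionality, pairing, union, infinity, $\Delta_0$-separation and $V=L$ hold because each is local: any instance involves sets in some $H_a$, and $H_{a+1}$ supplies the witness, since it is the definable power set. Every $e'\in\wo$ embeds into a wellfounded initial segment of the order, and the corresponding $M$-ordinal is isomorphic to $e'$.

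The main obstacle is the full \textsc{foundation} schema, since $M$ is illfounded. I would handle it by a second overspill: a violation of foundation for a formula $\psi$ is a definable nonempty class without a least element, and this yields an arithmetical descending sequence through $e$ computable from $H$. I would then add to $\Phi$ the requirement that each stage is ``foundation-correct'', which is arithmetical in $H$ for each formula and can be uniformized via a truth predicate. This requirement holds for wellorders, so the argument of the previous paragraph still produces a nonstandard $e$ satisfying it. Getting this requirement genuinely arithmetical for the whole schema is the delicate point; if needed, I would switch to a jump-hierarchy along $e$ to carry full satisfaction.
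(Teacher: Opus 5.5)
There is a genuine gap in your Case 2, at the step that carries the content of the lemma: the claim that the illfounded $e$ ``may be chosen to extend every element of $\wo$''. Your Case 1 and the check of the local axioms are fine. The justification you offer for this step does not work. If your $\Sigma^1_1$ set $P$ of such $e$ were empty, that would only say that every illfounded order carrying a hierarchy is missing some recursive wellorder from its wellfounded part. This is compatible with your class being strictly larger than $\wo$, so it does not make $\wo$ definable and does not reduce to Case 1.

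The Harrison-order alternative is not carried out either. Your class only guarantees hierarchies along \emph{some} illfounded orders, so ``let $M$ be the union of the $H_a$'' along $\mathcal{H}$ is unjustified. To make that route work you would need an overspill along initial segments $\mathcal{H}\upharpoonright x$. The $\Sigma^1_1$ set of $x$ admitting a hierarchy cannot coincide with the wellfounded part, since otherwise that part is $\DELTA^1_1$ and yields a $\DELTA^1_1$-definable descending sequence. You would also need an $\atr$ formalization of two facts: $\mathcal{H}$ has no such descending sequence, and its wellfounded part absorbs every element of $\wo$. None of this is in the proposal.

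Finally, you leave full \textsc{Foundation} open, but it need not be arithmetical. It suffices to put ``there is a satisfaction relation for the coded model in which all axioms of $\mathsf{BST}+V=L$ hold'' into the $\Sigma^1_1$ formula. $\atr$ verifies this for wellorders.

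The idea you are missing is that \textsc{Foundation} does double duty. Once it is built into the $\Sigma^1_1$ property, as the paper does, \emph{any} illfounded witness $M$ already has all of $\wo$ in its wellfounded part, so no special choice of the order is needed. The paper proves this with a substitute for Ville's lemma:
\begin{enumerate}
\item Suppose some $b\in\wo$ fails to embed into $\Ord^M$.
\item Then the set of $x\prec b$ whose initial segment is isomorphic to an $M$-ordinal is $\DELTA^1_1$ in $b$ and $M$, hence exists. Its order type $c$ is recursive, and $L_c$ is the wellfounded part of $M$.
\item If $L_c$ were inadmissible, Jensen's lemma would give a $\Sigma_1$ map from some $d<c$ cofinally into $c$.
\item Evaluating its defining formula in $L^M_w$ for a nonstandard $w$, which $\aca$ allows, yields a set in $L^M_{w+1}$, hence in $M$, that violates \textsc{Foundation} in $M$.
\item So $L_c\models\kp$, which is impossible for a recursive $c>\omega$.
\end{enumerate}
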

\proof 
This lemma is essentially the well-known fact that there exists a non-standard model of set theory with ordinal wellfounded part $\omega_1^{ck}$. However, the lemma is stated this way as  our base theory does not allow proving the existence of $\omega_1^{ck}$. 

Let $\phi(a)$ be the $\Sigma^1_1$ sentence formalizing the conjunction of $\textsc{lo}(a)$ and ``there is a countably coded $\omega$-model $M \models$ `$\mathsf{BST} + V=L$' such that $\Ord^M \cong a$.''
Using $\atr$, we can show that every member of $\wo$ satisfies $\phi$. However, being wellfounded is $\Pi^1_1$-complete (provably so in $\aca$) so that, since $\phi$ is $\Sigma^1_1$, $\atr$ proves the existence of $a$ such that $\phi(a) \land a \not\in\wo$. If $a$ were wellordered, then, $\mathcal{O}$ is $\Delta^1_1$ in $a$ and thus exists by $\DELTA^1_1{-}\mathsf{CA}_0$, from which the result follows easily, so assume that $a$ is illfounded. 

We take $M$ as the witness given by $\phi(a)$. The rest of the proof consists in verifying that $M$ is as desired. There are multiple ways of doing so; the following is one of them. Indeed, suppose towards a contradiction that there is $b \in \wo$ so that some initial segment of $b$ is not isomorphic to an initial segment of $a$. We claim that there is a least $c \prec b$, with this property. Indeed the two sets\begin{align*}
  &\phi(x) =  x \prec b \land \exists y\in \Ord^M \big( x \cong y\big); \\
  &\psi(x) =  x \prec b \land \forall y \in \mathrm{Ord}^{M} \ \big(\wo(y) \rightarrow y \prec x\big),
\end{align*} 
are both $\Sigma^1_1$. The fact that $\phi$ is $\Sigma^1_1$ is clear from the definition; the fact that $\psi$ is $\Sigma^1_1$ follows from an application of $\Sigma^1_1$-$\mathsf{AC}_0$, using that quantification over elements of $M$ is first-order.

Moreover, $\phi$ and $\psi$ are complementary. By $\Delta^1_1$--$\ca$, we can form the set $A$ of all $x$ satisfying $\phi(x)$. Using the order inherited from $b$, we can wellorder $A$ into a set whose order type is precisely the ordinal wellfounded part of $M$, say $c$. Since $b \in \wo$, we have $c \in \wo$ as well.
Then $L_{c}$ exists and is the wellfounded part of $M$, which, we claim, entails that $L_{c} \models \kp$, contradicting $c \in \wo$. 

To prove the claim, suppose that $L_{c}$ is inadmissible. By a result of Jensen (see~\cite[Lemma 2.11]{Jensen72}),
there is a $\Sigma_1$-definable total map from some $ d < c $ onto an unbounded subset of $c$,
say by a formula $\phi$ with a parameter $p \in L_c$. Let $w$ be an illfounded ordinal of
$M$. Let $Z$ be the set of all pairs $(x,y)$ such that $L^{M}_{w} \models \phi(p, x, y)$ and for no $y' <_{L^M} y$ do we have $L^{M}_{w} \models \phi(p, x, y')$. $Z$ exists by $\mathsf{ACA}_0$. Since $\phi$ defines a total map over $L_c$ and is $\Sigma_1$, it
follows that for every $x < d$ there is a unique $y \in L_c$ such that $(x,y) \in Z$. Thus,
$Z$ is an unbounded mapping from $d$ onto $c$ in $L_{w+1}^M$, and thus in $M$, contradicting the fact that $M$ satisfies the schema of \textsc{Foundation}.
\endproof

\subsection{Class random forcing over non-standard models}\label{SectClassForcing}
Eventually, we shall need to do random forcing over the model $M$ constructed in \S\ref{SectModelM}. This is an instance of class forcing over a non-standard model of a very weak set theory and thus requires some care.
For now, we only need to assume that $M$ is an $\omega$-model  of $\mathsf{BST} + V = L$. The fact that $M$ satisfies the full schema of \textsc{Foundation} will be crucial.

\begin{definition} 
Let $M$ be an $\omega$-model of $\mathsf{BST} + V = L$.
The \textit{random algebra} over $M$ is the partial order $\mathcal{B}_M$ consisting of all codes $c_A \in M$ of $\SIGMA^0_2$ sets such that $\lambda(A) > 0$ (equivalently, $M\models\lambda(A) > 0$).
Given $c_A, c_B \in \mathcal{B}_M$, we write $c_B \leq c_A$ ($c_B$ is \textit{stronger} than $c_A$) if $\lambda(B \setminus A) = 0$, i.e., $A$ contains $B$ up to a set of measure $0$. 
\end{definition}
Note that if $F$ is a filter on $\mathcal{B}_M$ then $F$ is necessarily closed under the equivalence relation given by $c_A \equiv c_{A'}$ if $c_A \leq c_{A'}$ and $c_{A'} \leq c_A$. We will identify equivalent conditions in $\mathcal{B}_M$ whenever this causes no confusion.
The random algebra is usually defined in terms of Borel sets. However,
by a theorem of Yu~\cite{Yu93}, $\atr$ proves that for all Borel $A$, there is a $\SIGMA^0_2$ set $B \subset A$ with $\lambda(B) = \lambda(A)$. 
Therefore, we restrict our focus to $\SIGMA^0_2$ sets as stated. This has the advantage of avoiding us having to deal with non-standard Borel codes from $M$. 

Our intention is now to force over $M$ in the set-theoretic sense. For genericity, we will employ $M$-definable genericity:

\begin{definition}
A filter $F$ on $\mathcal{B}_M$ is \textit{definably $M$-generic} if $F$ has nonempty intersection with every dense subset of $\mathcal{B}_M$ definable over $M$.
\end{definition}
By following the usual argument, one can see that (definable) genericity can be alternatively defined in terms of open dense subsets or maximal antichains.

We define the notion of a \textit{$\mathcal{B}_M$-name} inductively: these are sets of pairs $(c_A, \dot{x})$ in $M$, where $c_A \in \mathcal{B}_M$ and $\dot{x}$ is a $\mathcal{B}_M$-name. The set of names is called $M^{\mathcal{B}_M}$.

\begin{remark}
One could probably avoid the use of names altogether and force in a more recursion-theoretic manner (see e.g., Patey \cite{Pa24}), directly deciding infinitary properties about the generic real. Doing so would require the use of infinitary formulas indexed by possibly non-standard $M$-ordinals. It is not clear that one approach would be simpler than the other; the use of names in our construction is ultimately merely a matter of preference.
\end{remark}

The set of names is definable over $M$, so $M^{\mathcal{B}_M}$ is a definable class from the point of view of $M$. The fact that $M$ satisfies the schema of \textsc{Foundation} allows proving usual basic facts about $M^{\mathcal{B}_M}$. For instance, every set $x \in M$ has a \textit{canonical name} given by
\[\check{x} = \big\{(c_{[0,1]}, \check{y}) : y \in x\big\}.\] 
As usual, we abuse notation by identifying $\check{x}$ with $x$ if this causes no confusion.

We shall have to define the forcing relation over $M$. Letting $\alpha \in \Ord^M$, we write $c_A \Vdash^{\alpha} \dot{y} \in \dot{x}$ if $c_A, \dot{y}, \dot{x} \in L^M_{\alpha}$ and $L_{\alpha} \models c_A \Vdash_{\mathcal{B}_{L_{\alpha}}} \dot{y} \in \dot{x}$. Here, $\mathcal{B}_{L_{\alpha}}$ is an element of $M$, so
$\Vdash_{\mathcal{B}^{L_{\alpha}}}$, being an instance of set forcing, is defined as usual in $M$. Here, note the notation $\Vdash_{\mathbb{P}}$ emphasizing that $\mathbb{P}$ is the partial order under discussion.

\begin{lemma}
  If $c_A, \dot{y}, \dot{x} \in L^M_{\alpha}$ and $\bar{\alpha} > \alpha$, then \begin{align*}
    c_A \Vdash^{\alpha} \dot{y} \in \dot{x} \leftrightarrow c_A \Vdash^{\bar{\alpha}} \dot{y} \in \dot{x}.
  \end{align*}
\end{lemma}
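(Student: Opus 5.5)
The plan is to prove, inside $M$ and by induction on names, that the set-forcing relation for $\mathcal{B}_{L_\alpha}$ restricted to conditions and names from $L_\alpha$ agrees with the one for $\mathcal{B}_{L_{\bar\alpha}}$. Since the relation $c_A \Vdash^{\alpha} \dot y \in \dot x$ is defined inside $M$ by the usual recursion, the whole argument takes place in $M$. The one thing to watch is that the induction over names must use the \textsc{foundation} schema of $M$, not external wellfoundedness, because $\alpha$ and $\bar\alpha$ may be nonstandard.

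The first step is to compare the two partial orders. $\mathcal{B}_{L_\alpha}$ consists of the $\SIGMA^0_2$ codes in $L_\alpha$ of positive measure, ordered by $\lambda(B\setminus A)=0$. By Lemma~\ref{LemmaMeasureZeroSigma2}, both ``$\lambda(A)>0$'' and the ordering are arithmetical in the codes. These statements are therefore absolute between $L_\alpha$, $L_{\bar\alpha}$ and $M$, all of which have the same reals up to the relevant rank and are correct for arithmetical formulas, as noted after the definition of $\mathsf{BST}$. Hence $\mathcal{B}_{L_\alpha}=\mathcal{B}_{L_{\bar\alpha}}\cap L_\alpha$, with the same order and the same compatibility relation, because incompatibility means $\lambda(A\cap B)=0$, which is again arithmetical.

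The key fact needed is that $\mathcal{B}_{L_\alpha}$ is dense in $\mathcal{B}_{L_{\bar\alpha}}$ below any of its own conditions, in the sense relevant to forcing. That is, for $c_A\in L_\alpha$ and a set $D\subseteq\mathcal{B}_{L_\alpha}$, $D$ is predense below $c_A$ in the small algebra exactly when it is predense below $c_A$ in the large one. One direction is trivial. For the other, suppose $c_B\in\mathcal{B}_{L_{\bar\alpha}}$, $c_B\le c_A$, and $c_B$ is incompatible with every element of $D$. Then $\lambda(B\cap E)=0$ for all $E\in D$. Every $c'\le c_A$ in $L_\alpha$ meets some $E\in D$, so $A\setminus\bigcup D$ contains no positive-measure $\SIGMA^0_2$ set from $L_\alpha$.

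To turn this into a measure statement that the larger model sees, I would use that $\lambda(A\setminus \bigcup D)=0$ is arithmetical in a code for $D$. I would pass to a countable subfamily, since the measure algebra satisfies the countable chain condition: a maximal antichain in $D$ is countable and can be chosen in $L_\alpha$ or $L_{\alpha+1}$ via the wellorder $<_M$. If the forcing definitions only ever quantify over countable maximal antichains, this makes the predensity condition arithmetical in the code of a real, and hence absolute.

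With this in hand, the main argument is an induction, carried out in $M$ via \textsc{foundation} on the rank of the pair $(\dot y,\dot x)$, of the joint statements for $\in$ and $=$. The usual clauses read as follows.

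\begin{enumerate}
\item $c\Vdash \dot y\in\dot x$ iff the set of $c'\le c$ for which there is $(d,\dot z)\in\dot x$ with $c'\le d$ and $c'\Vdash \dot y=\dot z$ is dense below $c$.
\item The clause for $=$ is handled analogously, with universal quantification over the members of the names.
\end{enumerate}

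The members $\dot z$ of $\dot x$ lie in $L_\alpha$, so by the induction hypothesis the inner relations agree. The only difference between the two sides is the density quantifier, which ranges over $c'$ in $L_\alpha$ versus $L_{\bar\alpha}$, and the density-transfer fact handles it.

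I expect the density transfer to be the main obstacle. It needs ccc, or the Yu result that Borel sets have $\SIGMA^0_2$ kernels of equal measure, inside $M$, where $M$ only satisfies $\mathsf{BST}$ and may be nonstandard. I would first try to obtain the needed countable maximal antichain by a $<_M$-least recursion inside $L_{\bar\alpha}$, using $\Delta_0$-separation together with \textsc{foundation}. I would then check, via Lemma~\ref{LemmaMeasureZeroSigma2}, that its union already covers $A$ up to measure zero.
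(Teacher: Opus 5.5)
Your skeleton, an internal induction on names justified by \textsc{Foundation} in $M$, is the same as the paper's one-line proof. You are also right that the real content is the change of poset from $\mathcal{B}_{L_\alpha}$ to $\mathcal{B}_{L_{\bar\alpha}}$, which that proof does not discuss. However, the density-transfer fact your argument rests on is false in the generality you state, and your proposed proof of it does not go through.

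Suppose $\mathcal{B}_{L_\alpha}$ is countable in $M$, e.g.\ for $\alpha$ in the wellfounded part. Enumerate it as $(p_n)_{n\in\mathbb{N}}$ and build $W$ greedily: if $p_n$ is incompatible with every earlier member of $W$, add $p_n\cap I$ for a rational interval $I$ with $0<\lambda(p_n\cap I)<2^{-n-2}$. Its code is recursive in that of $p_n$, so it lies in $L_\alpha$. Then $W$ is arithmetical in the enumeration, hence in $M$, and it is a countable maximal antichain of $\mathcal{B}_{L_\alpha}$, so it is predense below $c_{[0,1]}$ there. Yet $\lambda(\bigcup W)\le 1/2$. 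A closed set of positive measure disjoint from $\bigcup W$ has a code arithmetical in $W$, which lies in some $L_{\bar\alpha}$, and it is incompatible with every member of $W$ in $\mathcal{B}_{L_{\bar\alpha}}$.

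The same example shows why passing to a countable maximal antichain $W\subseteq D$ does not help. Predensity in $\mathcal{B}_{L_\alpha}$ only excludes positive-measure pieces of $A\setminus\bigcup W$ whose codes are in $L_\alpha$. So it is not equivalent to the arithmetical statement $\lambda(A\setminus\bigcup W)=0$, and the pieces witnessing failure are typically obtainable only from $W$ itself. The sets $D$ in the forcing clauses are defined in $M$ from the relation $\Vdash^{\alpha}$, not inside $L_\alpha$, so nothing places their maximal antichains in $L_\alpha$. An antichain chosen in $L_{\alpha+1}$ yields a witness condition outside $\mathcal{B}_{L_\alpha}$, which contradicts nothing. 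Closing this needs a genuinely new ingredient. One option is to show that the specific dense sets generated by the recursion are captured by countable antichains lying in $L_\alpha$, which needs closure properties of $L_\alpha$ not available for arbitrary $\alpha$. Another is to compute atomic forcing via Boolean values in the full measure algebra of $M$, so that it is independent of $\alpha$.

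Second, even granting predensity transfer, your inductive step only gives $\Vdash^{\alpha}\Rightarrow\Vdash^{\bar\alpha}$. For the converse, density below $c$ in $\mathcal{B}_{L_{\bar\alpha}}$ supplies conditions $c'\notin L_\alpha$ with $c'\Vdash^{\bar\alpha}\dot y=\dot z$. Your induction hypothesis concerns only conditions in $L_\alpha$ and says nothing about these. So it is not true that the two sides differ only in the range of the density quantifier. The usual remedy is to carry the negated atomic formulas through the induction. Then if $c\not\Vdash^{\alpha}\psi$, some $p\le c$ in $\mathcal{B}_{L_\alpha}$ forces $\lnot\psi$, which transfers upward and contradicts $c\Vdash^{\bar\alpha}\psi$. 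But that upward transfer again relies on exactly the predensity transfer that is missing.
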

\begin{proof}
This is proved by a straightforward induction on the rank of $\dot x$, using the fact that $M$ satisfies the schema of \textsc{Foundation}. See e.g., \cite[Lemma 14]{AgLu26}.
\end{proof}

\begin{definition}
The forcing relation $c_A \Vdash \phi(\dot{a})$ for a condition $c_A \in \mathcal{B}_M$, $\phi$ a formula of $L_{\mathrm{Set}}$, and $\dot{a}$ a $\mathcal{B}_M$--name is defined inductively the complexity of $\phi$. 
  \begin{enumerate}
  \item $c_A \Vdash \dot y \in \dot x$ holds if $c_A\Vdash^\alpha \dot y \in \dot x$ for all (equivalently, some) $\alpha > \text{rank} (\dot x), c_A$;
    \item $c_A \Vdash \phi \land \psi$ holds if $c_A \Vdash \phi$ and $c_A \Vdash \psi$;
    \item $c_A \Vdash \lnot \phi$ holds if $\forall c_B \leq c_A \ c_B \not\Vdash \phi$;
    \item $c_A \Vdash \exists x \ \phi(x)$ holds if 
    $$\forall c_B \leq c_A \ \exists c_D \leq c_B \ \exists \dot{a} \in M^{\mathcal{B}} \ c_D \Vdash \phi(\dot{a});$$
    \item $c_A \Vdash \forall x \ \phi(x)$ holds if $\forall \dot{a} \in M^{\mathcal{B}} \ c_A \Vdash \phi(\dot{a})$.
  \end{enumerate}
\end{definition}

When aiming at consistency results, one usually need not realize forcing extensions $M[G]$ and instead focuses on the forcing relation. Here, however, we will need to realize the forcing extensions within our model of $\mathsf{ATR}_0$. 
Usually this is done by induction on the rank, but our model of interest $M$ is non-standard. Another point of subtlety is the fact that the canonical name for the generic filter, $\{(c_A, \check{c_A}) : c_A \in \mathcal{B}_M\}$, is not an element of $M$.  This is problematic because we would like to prove the forcing theorem for formulas involving the generic. Finally, we face the problem that the forcing relation might not be definable, even when restricting the complexity of formulas, since forcing each bounded quantifier seemingly requires two alternating unbounded quantifiers. Nonetheless, when focusing on subformulas of a given $\phi$, the forcing relation is indeed definable by a formula with complexity determined by the logical complexity of $\phi$.
The following result summarizes how to overcome these difficulties, as well as the results we need concerning this generic extension. 

Below, recall that a subset $A$ of a model $M$ is \textit{amenable to $M$} if $A \cap x \in M$ for all $x \in M$.
\begin{lemma}[The forcing theorem for $\mathcal{B}_M$]\label{LemmaForcingTheorem}
Assume $\mathsf{ATR}_0$. Let $M$ be a model as above, possibly non-standard, and let $G\subset\mathcal{B}_M$ be definably $M$-generic. 
Then, there is a model $M[G]$ satisfying the following:
\begin{enumerate}
\item\label{LemmaForcingTheorem1} there is an embedding from $M$ into $M[G]$ with range definable in $M[G]$;
\item\label{LemmaForcingTheorem2} $\Ord^{M[G]} \cong \Ord^M$ under this embedding;
\item\label{LemmaForcingTheorem3} $G$ is amenable to $M[G]$;
\item\label{LemmaForcingTheorem4} Letting $g = \bigcap \{ A : c_A \in G\}$, we have $g \in \mathbb{R}\cap M[G]$ and $M[G] \models V = L[g]$;
\item\label{LemmaForcingTheorem5} for all $\phi \in L_{\mathrm{Set}}$, $\dot a \in M^{\mathcal{B}_M}$, we have
\[M[G] \models \phi(\dot a_G) \text{ if and only if } \exists p\in G\, (p\Vdash \phi(\dot a)).\] 
\end{enumerate} 
\end{lemma}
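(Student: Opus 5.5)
We will build $M[G]$ as a quotient of the name class. The universe consists of the $\mathcal{B}_M$-names $\dot x \in M^{\mathcal{B}_M}$. We set $\dot x \sim \dot y$ iff some $p \in G$ forces $\dot x = \dot y$, and $\dot y\, E\, \dot x$ iff some $p\in G$ forces $\dot y \in \dot x$. Equality between names is handled as usual: it is defined simultaneously with $\in$ by recursion on rank inside the set-forcing relations $\Vdash^\alpha$, and the preceding lemma shows that $\Vdash^\alpha$ does not depend on $\alpha$.

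The first task is to check that these relations are sets in our ambient model of $\atr$. $M$ is coded by a set, and the atomic relation $c_A \Vdash \dot y \in \dot x$ is arithmetical in the code of $M$, because it is evaluated inside some $L^M_\alpha$. Since $G$ is given, $E$ and $\sim$ exist by arithmetical comprehension. Then $M[G]$ is the quotient $(M^{\mathcal{B}_M}/{\sim}, E)$, where we choose $<_M$-least representatives of classes arithmetically. This avoids any recursion along the possibly illfounded ordinals of $M$.

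The key step is the forcing theorem (5). We do \emph{not} attempt a single definition of $\Vdash$ for all formulas. Instead we fix $\phi$ and work only with its finitely many subformulas. For these, the forcing relation is definable over $M$ by a formula whose complexity is bounded by that of $\phi$, so by $\aca$ it exists as a set from the code of $M$. We then prove (5) by an external induction on the subformulas of $\phi$; this is a finite meta-induction, not a formal induction.

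\begin{itemize}
\item \textbf{Atomic case.} This holds by definition of $E$. We first need the truth lemma for $=$ and $\in$ in the style of Kunen, proved via density and the consistency of $\Vdash^\alpha$ with extension. Any induction on rank here takes place inside $M$, using the \textsc{foundation} schema of $M$, not externally.
\item \textbf{Negation case.} This uses that $\{q : q \Vdash \phi \text{ or } q\Vdash\lnot\phi\}$ is dense and definable over $M$, so it meets $G$ by definable genericity. The filter property rules out conflicting conditions in $G$.
\item \textbf{Existential case.} Similarly, the set of conditions that either force some instance $\phi(\dot a)$ or force $\lnot\exists x\,\phi$ is definable and dense.
\end{itemize}

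We also need the basic facts: the forcing relation is closed under strengthening, and $p\Vdash\phi$ iff the set of $q \le p$ forcing $\phi$ is dense below $p$. These are proved inside $M$ by the usual arguments.

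The remaining clauses follow from this.
\begin{itemize}
\item \textbf{Clause (1).} The embedding is $x\mapsto [\check x]$. It is injective and preserves $\in$ by the atomic case. Its range is definable by taking the canonical ground model predicate $\check M$ relative to the check names; as in $L$, this can be written as $\exists \alpha\, (x \in L_\alpha)$ interpreted in $M[G]$.
\item \textbf{Clause (2).} Every name forced to be an ordinal is forced equal to some $\check\alpha$. The proof is the usual one, using the ccc of the random algebra as seen in $M$, or more simply that $\mathrm{rank}(\dot x)$ bounds the rank of its evaluation.
\item \textbf{Clause (3).} For $x \in M$, the set $G\cap x$ is the evaluation of the name $\{(c_A,\check{c_A}) : c_A \in x\cap\mathcal{B}_M\}\in M$ by $\Delta_0$-separation in $M$. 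This is where we sidestep the fact that the global canonical name for $G$ is not in $M$.
\item \textbf{Clause (4).} We take a name for $g$: the real whose $n$-th digit is $i$ exactly on the condition given by the basic set of reals with that digit. The generic real $g$ lies in every $A$ with $c_A\in G$, because the intersection over the finitely-approximating conditions is nonempty by density arguments and Lemma~\ref{LemmaEmptyClosedSets}. Every name $\dot x$ is evaluated in $L[g]$ via the $L_\alpha$ containing $\dot x$ together with $G\cap L_\alpha$, and $G\cap L_\alpha$ is recoverable from $g$ as $\{c_A : g\in A\}$. Hence $M[G]\models V=L[g]$.
\end{itemize}

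The main obstacle is the forcing theorem in the atomic and $=$ cases. There the ill-foundedness of $M$ prevents an external rank induction, and every such argument must be internalized into $M$ via its \textsc{foundation} schema.
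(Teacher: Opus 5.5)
Your construction is the paper's. You build $M[G]$ as the quotient of $M^{\mathcal{B}_M}$ by the relations forced by members of $G$, and prove clause (5) by an external induction over the subformulas of a fixed $\phi$, using that the restricted forcing relation is definable over $M$ and that $G$ meets $M$-definable dense classes. You also use the digit name $\check g$, the embedding $x\mapsto\check x_G$ with range $L^{M[G]}$, and the rank bound $\Vdash\sigma<\check a$ for $\sigma\in L^M_a$ to get (2). You are more explicit than the paper about why $E$, $\sim$ and the restricted forcing relations exist as sets (by $\aca$ from the codes of $M$ and $G$).

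The routes genuinely differ in two places. For (3), you use local names $\{(c_A,\check{c_A}) : c_A\in x\cap\mathcal{B}_M\}$, whereas the paper reads $G$ off $g$ once (4) is proved. For $g\in A$ when $c_A\in G$, you gesture at compactness, whereas the paper appeals to ``every $\SIGMA^0_2$ set of positive measure contains an open interval''. That claim is false (a fat Cantor set is a counterexample), so your route is the sounder one there, once completed.

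Several steps need repair, though none changes the strategy.

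\textbf{Clause (2).} The sentence ``every name forced to be an ordinal is forced equal to some $\check\alpha$'' is false as written. The name $\{(c_{A_{0,0}},\varnothing)\}$ is forced to be $0$ or $1$, but $c_{[0,1]}$ forces neither. What holds, for $\sigma\in L^M_a$, is that conditions forcing $\sigma=\check\beta$ for some $\beta<a$ are dense below any condition forcing $\sigma\in\Ord$. This follows from the rank bound plus the definition of forcing $\in\check a$, and definable genericity then supplies $\beta$. The ccc plays no role.

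\textbf{Clause (3).} You only treat $x$ in the range of the embedding. Amenability to $M[G]$ asks for $G\cap x\in M[G]$ for every $x\in M[G]$, so a further step is needed, e.g.\ the paper's derivation from (4).

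\textbf{Clause (4).} Knowing that $A$ meets every basic interval determined by some $g\upharpoonright n$ only yields $e(g)\in\overline{A}$. You must first write $A=\bigcup_k F_k$ with $F_k$ closed. Conditions below some $c_{F_k}$ are dense below $c_A$, so some $c_{F_k}\in G$. Only then do positive measure and compactness give $e(g)\in F_k$.

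Your identity $G\cap L_\alpha=\{c_A : g\in A\}$ also needs the converse inclusion. If $c_A\notin G$, some $c_B\in G$ has $\lambda(A\cap B)=0$. Then $g$ avoids each closed null piece $K$ of $A\cap B$, because $c_{[0,1]\setminus K}$ is equivalent to $c_{[0,1]}\in G$. Hence $g\notin A$.

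\textbf{Representatives.} Pick representatives of $\sim$-classes by least natural-number code rather than $<_M$-least element. The classes are not definable over $M$ and $<_M$ is externally illfounded, so a $<_M$-least member is not automatic. It does exist, but only by a further definable-genericity argument.
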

\proof
We realize $M[G]$ as the set $M^{\mathcal{B}_M} / G$ of $\mathcal{B}_M$-names modulo the equivalence relation given by 
\[\sigma \equiv \tau \leftrightarrow \exists p\in G\, (p\Vdash \sigma = \tau).\]
We denote by $\sigma_G$ the equivalence class of the name $\sigma$ and put 
\[\sigma_G \in \tau_G \leftrightarrow \exists p\in G\, (p\Vdash \sigma \in \tau).\]
Using \textsc{Foundation} in $M$, we prove by induction on the ranks of names that this membership relation is congruent with $\equiv$, so that $(M[G], \in)$ is well-defined. With this, clause \eqref{LemmaForcingTheorem5} is proved by an external induction on $\phi$ following the usual proof (see e.g., Jech \cite{Je03}), using the facts that the forcing relation $\Vdash$ restricted to subformulas of $\phi(\vec a)$ is definable, and that $G$ meets every subclass of $\mathcal{B}_M$ definable over $M$. Note that simple ``$M$-genericity'' would not suffice for this argument.
We find it convenient to prove \eqref{LemmaForcingTheorem4} next.

Below, we consider the following function $e$. The domain of $e$ consists of all elements of Cantor space $2^{\mathbb{N}}$ whose digits are not eventually constant, and its image is defined by
\[e: x\mapsto \sum_{i = 0}^\infty x(i)\cdot 2^{-i-1}.\]
Thus, $e$ is a homeomorphism between its domain and the set $(0,1) \setminus \mathbb{Z}[1/2]$ of dyadic irrational numbers between $0$ and $1$.

\begin{claim}\label{LemmaSigma02Imagee}
Let $e$ be the homeomorphism above and let $n\in\mathbb{N}, j \in \{0,1\}$. Then, $e[\{x \in 2^{\mathbb{N}} : x(n) = j\}]$ is open (in $(0,1)\setminus \mathbb{Z}[1/2]$).
\end{claim}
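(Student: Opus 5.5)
The plan is to compute the image explicitly as a union of dyadic open intervals intersected with the domain $D = (0,1)\setminus\mathbb{Z}[1/2]$. First, describe the cylinder set by the values of the first $n$ digits. Write $C_{n,j} = \{x \in 2^{\mathbb{N}} : x(n) = j\}$. Then $C_{n,j}$ is the finite disjoint union over strings $s \in 2^{n}$ of the basic cylinders $[s^\frown j] = \{x : x \supseteq s^\frown j\}$.

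Next, compute the image of one cylinder under $e$. For a string $t = s^\frown j$ of length $n+1$, let $q_t = \sum_{i\le n} t(i)2^{-i-1}$. Every $x \in [t]\cap\dom(e)$ satisfies
\[e(x) = q_t + \sum_{i>n} x(i)2^{-i-1}.\]
Because the digits of $x$ are not eventually constant, the tail sum lies strictly between $0$ and $2^{-n-1}$. Hence $e(x) \in (q_t, q_t + 2^{-n-1})$, and this value is not a dyadic rational.

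Conversely, take any $y \in (q_t, q_t+2^{-n-1})\setminus\mathbb{Z}[1/2]$. Its unique binary expansion (unique because $y$ is not dyadic) is not eventually constant. The first $n+1$ digits of that expansion are exactly $t$, because $y$ lies in the open dyadic interval determined by $t$. So $y = e(x)$ for some $x \in [t]$.

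This gives
\[e[C_{n,j}] = \bigcup_{s\in 2^n} \big(q_{s^\frown j},\, q_{s^\frown j}+2^{-n-1}\big)\cap D,\]
which is a finite union of open intervals relativized to $D$. In particular it is open in $D$, and a code for it is obtained primitive recursively from $n$ and $j$.

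The only point needing care is the formalization. We need the facts that the tail sum is strictly positive and strictly below $2^{-n-1}$, and that binary expansions of non-dyadic reals exist and are unique. Both are elementary and provable in $\rca$, since the expansion digits of a real not in $\mathbb{Z}[1/2]$ are computable from a fast Cauchy sequence by strict comparisons with dyadic rationals. I expect nothing here to be a real obstacle. The main subtlety is simply excluding the dyadic endpoints, which is exactly why the domain of $e$ omits eventually-constant sequences.
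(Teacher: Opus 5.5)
Your proof is correct and is the same argument as the paper's. Both decompose $\{x : x(n)=j\}$ into the cylinders $[s]$ with $s\in 2^{n+1}$ and $s(n)=j$, and observe that $e$ maps each such cylinder onto $(q_s, q_s+2^{-n-1})\setminus\mathbb{Z}[1/2]$. You also spell out both inclusions, which the paper leaves implicit, and your sum over $i\le n$ is the right index range (the paper's displayed sum up to $n+1$ is a harmless slip).
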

\proof
From the definition, we have $y \in e[\{x \in 2^{\mathbb{N}} : x(n) = j\}]$ if and only if for some $s \in 2^{n+1}$ with $s(n) = j$, we have $y \in e[\{x \in 2^{\mathbb{N}} : x\upharpoonright n+1 = s\}]$, i.e., if and only if 
\[\sum_{i = 0}^{n+1} s(i)\cdot 2^{-i-1} < y < \sum_{i = 0}^{n+1} s(i)\cdot 2^{-i-1} + 2^{-n-1},\]
as desired. This proves the claim.
\endproof

\begin{definition}
We define the canonical name $\check{g}$ for the generic random real by
\begin{align*}
    \check{g} = \big\{(c_{A_{n,j}}, (n,j)) : n \in \mathbb{N}, \ j \in \{0,1\} \big\}, 
  \end{align*}
where $A_{n,j} = e[\{x \in 2^{\mathbb{N}} : x(n) = j\}]$.
\end{definition}
Informally, the canonical name $\check{g}$ is defined so that the statement that the $n$th digit of $\check{g}$ is $j$ has as ``truth value'' (a code of) the set $A_{n,j}$.

Note that $A_{n,j}$ is open by Claim \ref{LemmaSigma02Imagee}, so $\check{g}$ is indeed a $\mathcal{B}_M$-name.  The advantage of this definition is that $\check{g}$ is computable, unlike $\check{G}$, which does not even belong to $M$.
We claim that
\begin{equation}\label{eqGenericReal}
g = \bigcap \{ A : c_A \in G\}, \quad \text{ where } g = (\check{g})_G.
\end{equation}
To prove the claim, we observe that by genericity, the set on the right-hand side will not be a dyadic rational, and thus will be in the range of the homeomorphism $e$ defined above.
Let $A_{n,j}$ be as in the definition of $\check{g}$.
 Thus, for each $n$, the set $\{c_{A_{n,0}}, c_{A_{n,1}}\}$ is a maximal antichain, so precisely one of $c_{A_{n,0}}$ or $c_{A_{n,1}}$ will belong to $G$. Since every $\SIGMA^0_2$ set of positive measure contains an open interval and $M$ 
conditions in $\mathcal{B}_M$ really code sets of positive measure by Lemma \ref{LemmaMeasureZeroSigma2}, it follows that a code $c_A \in M$ belongs to $G$ if and only if $A$ is contained in the intersection of finitely many $A_{n,j}$ containing $(\check{g})^G$, from which \eqref{eqGenericReal} follows. By \textsc{Foundation} in $M$, we see that $M$ contains names for $L_a[g]$ for all $a \in \Ord$.
By \eqref{eqGenericReal}, we have $L_a[G] \in L_{a+1}[g]$ for all $a \in \Ord^M$, so $M[G] \models V = L[g]$, completing the proof of \eqref{LemmaForcingTheorem4}, and yielding \eqref{LemmaForcingTheorem3} in the process.

We embed $M$ into $M[G]$ via the mapping $x\mapsto \check{x}$. Given a name $\sigma$ for an ordinal with $\sigma \in L_a^M$, one uses \textsc{Foundation} in $M$ to see that $\Vdash \sigma < a$, yielding \eqref{LemmaForcingTheorem2}.
Using \textsc{Foundation} in $M$, one shows that $\Vdash \check{y} \in \check{x}$ if and only if $y\in x$, and that for each $a \in \Ord^M$, we have $\Vdash L_{a}^{M[G]} = L_a^M$, proving \eqref{LemmaForcingTheorem1}. This completes the proof of the lemma.
\endproof

\begin{lemma}\label{LemmaFoundation}
Assume $\mathsf{ATR}_0$ and let $M$ be as above, $G\subset\mathcal{B}_M$ be definably $M$-generic, and $g$ be the generic real. Then, $L[g]$ satisfies full \textsc{Foundation}.
\end{lemma}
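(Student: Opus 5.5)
We must show: for every formula $\phi(x,\vec p)$ of set theory and parameters $\vec p\in M[G]$, if $M[G]\models\exists x\,\phi(x,\vec p)$ then there is an $\in$-minimal such $x$. We may work with $M[G]$, which by Lemma \ref{LemmaForcingTheorem}\eqref{LemmaForcingTheorem4} satisfies $V=L[g]$, so $L[g]$ is just $M[G]$. The strategy is to pull the statement back to $M$ through the forcing relation and apply \textsc{Foundation} in $M$, which holds there for all formulas by hypothesis. The parameters are of the form $\dot a_G$ for names $\dot a\in M^{\mathcal B_M}$, and it suffices to prove \textsc{Foundation} in the equivalent form of $\in$-induction, or more conveniently as: there is no nonempty definable class without an $\in$-minimal element.

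First, I will reduce minimality in $M[G]$ to a statement about ranks of names. Every element of $M[G]$ is $\sigma_G$ for some name $\sigma$. Also, $\sigma_G\in\tau_G$ implies some $p\in G$ forces $\sigma\in\tau$, and by the usual unfolding this means $\sigma_G=\rho_G$ for some $\rho$ with $(q,\rho)\in\tau$. Hence each element of $\tau_G$ has a name of strictly smaller $M$-rank than $\tau$. Next, suppose $M[G]\models\exists x\,\phi(x,\dot a_G)$. By Lemma \ref{LemmaForcingTheorem}\eqref{LemmaForcingTheorem5}, pick $p\in G$ with $p\Vdash\exists x\,\phi(x,\dot a)$. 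Consider the class of $M$-ordinals
\[R=\{\alpha\in\Ord^M:\exists q\in G\ \exists\sigma\,(\mathrm{rank}(\sigma)=\alpha\wedge q\Vdash\phi(\sigma,\dot a))\}.\]
This involves $G$, which is not definable in $M$, so instead I apply \textsc{Foundation} in $M$ to a definable set built around the dense set
\[D=\{q\le p: \exists\sigma\,(q\Vdash\phi(\sigma,\dot a)\wedge\forall\tau\,(\mathrm{rank}\,\tau<\mathrm{rank}\,\sigma\to q\Vdash\lnot\phi(\tau,\dot a)))\}\cup\{q: q\perp p\}.\]
The forcing relation restricted to subformulas of $\phi$ is definable over $M$, as noted in the proof of Lemma \ref{LemmaForcingTheorem}, so $D$ is $M$-definable.

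The main step is to show that $D$ is dense, and this is the step I expect to be the obstacle. Given $r\le p$, some $r'\le r$ and name $\sigma$ satisfy $r'\Vdash\phi(\sigma,\dot a)$. Apply \textsc{Foundation} in $M$ to the definable nonempty class of ordinals $\alpha$ for which there is $r''\le r'$ and $\sigma$ of rank $\alpha$ with $r''\Vdash\phi(\sigma,\dot a)$. This gives a least $\alpha_0$, and we pick a witness $q$ and $\sigma$. Any $q'\le q$ forcing $\phi(\tau,\dot a)$ with $\mathrm{rank}\,\tau<\alpha_0$ would contradict minimality, so $q\Vdash\lnot\phi(\tau,\dot a)$ for all such $\tau$, and $q\in D$. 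The care needed is that ``rank'' is computed in $M$ and may be nonstandard, but only internal \textsc{Foundation} is used, so this is harmless. By definable genericity, take $q\in G\cap D$; since $p\in G$, $q$ is compatible with $p$ and so falls in the first part, yielding $\sigma$.

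Finally, I verify that $x=\sigma_G$ is $\in$-minimal. By \eqref{LemmaForcingTheorem5}, $M[G]\models\phi(\sigma_G,\dot a_G)$. If $y\in\sigma_G$ satisfied $\phi$, then $y=\tau_G$ with $\mathrm{rank}\,\tau<\mathrm{rank}\,\sigma$ by the first step. However, $q\in G$ forces $\lnot\phi(\tau,\dot a)$, so by \eqref{LemmaForcingTheorem5} $M[G]\models\lnot\phi(y,\dot a_G)$, a contradiction.
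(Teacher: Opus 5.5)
Your proposal is correct and takes essentially the same route as the paper. Both arguments use \textsc{Foundation} in $M$, together with the definability of the forcing relation for subformulas of $\phi$, to show that conditions carrying a minimal-rank witness name are dense below a condition forcing $\exists x\,\phi$; they then pick such a condition in $G$ and obtain minimality by unfolding $\sigma_G\in\tau_G$ into a name $\rho$ with $(q,\rho)\in\tau$ of smaller rank. Your dense set $D$ builds in ``$q\Vdash\lnot\phi(\tau,\dot a)$ for all names $\tau$ of smaller rank'' explicitly, which is a cleaner rendering of what the paper means by ``a name $\tau$ of minimal rank.''
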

\proof
Fix a formula $\phi$ and a parameter $\sigma \in M^{\mathcal{B}_M}$. Let $c_A$ be a condition forcing that $\{x: \phi(\sigma, x)\}$ is nonempty.
Using the facts that the forcing relation restricted to subformulas of $\phi$ is definable and that $M$ satisfies full \textsc{Foundation}, we may find a condition $c_B\leq c_A$ and a name $\tau$ of minimal rank such that
\[c_B \Vdash \phi(\sigma,\tau).\]
Indeed the class of such conditions is dense below $c_A$, so we may assume $c_B \in G$ without loss of generality. Then, $\tau_G$ is as desired, for otherwise by Lemma \ref{LemmaForcingTheorem} we would have some $c_D \leq c_B$ in $G$ and names $\chi, \chi' \in M^{\mathcal{B}_M}$ such that $M[G] \models \phi(\sigma_G,\chi_G) \wedge \chi_G \in \tau_G$, $c_D \Vdash \chi = \chi'$ and $(c_D, \chi') \in \tau$, contradicting the $\in$-minimality of $\tau$.
\endproof

\subsection{$M$-random reals} 
Below, we assume $\mathsf{ATR}_0$ and let $M$ be as above. 

\begin{definition}
We say that a real $X$ is \textit{definably $M$--random} if $X \not \in A$ whenever $A$ is a $\SIGMA^0_2$ set of measure zero definable over $M$.
We denote by $R(M)$ the set of reals definably random  over $M$.
\end{definition}

Thus, definable $M$-randomness asserts that the real $X$ passes the collection of Martin-L\"of randomness tests definable over $M$.

\begin{lemma}
The set of real numbers $X$ which are definably not $M$--random has measure zero.
\end{lemma}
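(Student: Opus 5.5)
The plan is to show that the complement of $R(M)$ is a countable union of measure-zero sets, obtained from an enumeration of the $\SIGMA^0_2$ codes in $M$. Since $M$ is coded by a set in our ambient model of $\atr$ and is an $\omega$-model, its real-number part (in particular, all codes of $\SIGMA^0_2$ sets in $M$) is enumerated by a single set. So we may fix a sequence $\langle c_{A_k} : k\in\mathbb{N}\rangle$ listing all codes of $\SIGMA^0_2$ sets belonging to $M$. Every $\SIGMA^0_2$ set definable over $M$ has a code in $M$: the code is a real definable over $M$, and we read "definable over $M$" as "coded by an element of $M$". The delicate point here is that $M$ need not satisfy enough separation, so I would simply take the definition of definable $M$-randomness to quantify over codes lying in $M$.

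Next, using arithmetical comprehension, I would form the set $Z=\{k : \lambda(A_k)=0\}$. This is legitimate because, by Lemma \ref{LemmaMeasureZeroSigma2} (with $B=\varnothing$), the statement $\lambda(A_k)=0$ is arithmetical in $c_{A_k}$. Moreover, $M$ is correct about this statement, so $M$ and $V$ agree on which codes have measure zero. The complement of $R(M)$ is then exactly $N=\bigcup_{k\in Z} A_k$, which is a $\SIGMA^0_2$ set with an explicit code, uniformly obtained from the enumeration and $Z$.

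It remains to show $\lambda(N)=0$. By the $\sigma$-additivity (countable subadditivity) of Lebesgue measure on arithmetical sets, available already in $\mathsf{WWKL}_0$ by Simpson--Yu as cited in the proof of Lemma \ref{LemmaMeasureZeroSigma2}, we get $\lambda(N)\le\sum_{k\in Z}\lambda(A_k)=0$. Concretely, for each $\varepsilon>0$ I would cover each $A_k$ by an open set of measure below $\varepsilon 2^{-k-1}$; such covers can be chosen uniformly by $\aca$, since "$\lambda(U)<q$" is arithmetical in the code of $U$. The union of these covers is then an open set of measure below $\varepsilon$ containing $N$.

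The main obstacle I expect is the bookkeeping that makes "definable over $M$" refer to a set-sized family in $V$. A $\SIGMA^0_2$ set defined over $M$ by a formula with parameters must be represented by a code in $M$, rather than by a possibly nonstandard or external object. I would handle this by noting that $M$ is an $\omega$-model, so its reals are genuine reals of $V$ and the family of such codes is enumerable.
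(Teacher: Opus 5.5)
Your skeleton is the same as the paper's: enumerate the relevant codes, use Lemma \ref{LemmaMeasureZeroSigma2} to select the null ones arithmetically, take the countable union, and apply countable subadditivity. The gap is in \emph{which} family you enumerate. You replace ``$\SIGMA^0_2$ set definable over $M$'' by ``$\SIGMA^0_2$ set whose code is an element of $M$''. You justify this by asserting that every definable code lies in $M$, but nothing supports that assertion. $M$ satisfies only $\Delta_0$-\textsc{separation}, so a real defined over $M$ by a formula with unbounded quantifiers need not be a member of $M$. Compare $\mathcal{O}$, which is $\Sigma_1$-definable over $L_{\omega_1^{ck}}$ but is not an element of it. So you have proved the lemma for a strictly weaker notion of non-randomness, under which $R(M)$ can be larger.

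This narrowing is not harmless. The paper deliberately works with definability rather than membership throughout. The proof of Lemma \ref{LemmaForcingTheorem} remarks that simple $M$-genericity would not suffice, and definable randomness is what Lemma \ref{LemmaFsigmaDense} matches with definable genericity. That lemma concerns dense \emph{classes} definable over $M$, which need not be elements of $M$. With your reading, the later uses of $R(M)$, e.g.\ in \eqref{eqFsigma}, would not be justified.

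The repair is exactly the ingredient the paper invokes. Since $\atr$ proves $\mathsf{ACA}_0^+$ and $M$ is a countable coded model, the full satisfaction relation of $M$ exists. From it, by $\aca$, you obtain the sequence $r_{\phi,p}=\{n : M\models\phi(n,p)\}$, indexed by formulas $\phi$ and parameters $p\in M$, which lists every real definable over $M$. Use this sequence in place of your enumeration of the reals of $M$. The set of indices $(\phi,p)$ for which $r_{\phi,p}$ codes a $\SIGMA^0_2$ set of measure zero is arithmetical in the sequence by Lemma \ref{LemmaMeasureZeroSigma2}. The rest of your argument (the union is a null $\SIGMA^0_2$ set by countable subadditivity) then goes through verbatim.
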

\proof
A real $x$ is not definably $M$--random if and only if it belongs to a measure zero $\SIGMA^0_2$ set definable over $M$. Using $\mathsf{ACA}^+_0$, and Lemma \ref{LemmaMeasureZeroSigma2}, the set of codes of such sets exists. 
Within $\aca$, the union of these sets is measurable, and by countable additivity of $\lambda$ (provable in $\mathsf{WWKL}_0$), this union has measure zero.
\endproof

\begin{remark}
  If $X$ is definably $M$--random, then $X$ is $\Delta^1_1$ random, by Lemma \ref{LemmaNonstandardOrdinals}. The converse is not true since $M$ could contain some non-standard code for Borel sets that are not $\Delta_1^1$. 
\end{remark}

\begin{definition}
We say that a real number $X$ is \textit{definably $\mathcal{B}_M$--generic} if 
\[X = \bigcap\Big\{A: c_A \in G\Big\}\]
for some $M$-definably generic $G\subset \mathcal{B}_M$.
\end{definition}

\begin{lemma}\label{LemmaFsigmaDense}
Let $M$ be as above; then, a real $X$ is definably $M$--random if and only if it is definably $\mathcal{B}_M$--generic.
\end{lemma}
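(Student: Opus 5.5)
We prove the two directions separately, following the classical Solovay argument that random reals over a model are exactly the generics for the random algebra. The new point is that ``dense'' and ``measure zero'' must be read as ``definable over $M$''.

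\emph{Generic $\Rightarrow$ random.} Suppose $X = \bigcap\{A : c_A \in G\}$ with $G$ definably $M$-generic, and let $N$ be a $\SIGMA^0_2$ set of measure zero with code $c_N$ definable over $M$. Write $N = \bigcup_i N_i$ with each $N_i$ closed; the code of each $N_i$ is then also definable over $M$. Consider the class
\[D = \{c_B \in \mathcal{B}_M : B \cap N_i = \varnothing\},\]
where $B$ may be taken open. This class is definable over $M$, because by Lemma \ref{LemmaMeasureZeroSigma2} and Lemma \ref{LemmaEmptyClosedSets} the relevant measure and emptiness statements are arithmetical, and $M$ is correct about them. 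It is dense: given $c_A$ with $\lambda(A)>0$, the set $A \setminus N_i$ is $\SIGMA^0_2$ (it is $A$ intersected with an open set) and still has positive measure, and its code lies in $M$. Genericity then yields $c_B \in G \cap D$, so $X \in B$ and $X \notin N_i$ for each $i$. One point needs care: $X \in B$ is not automatic from $X = \bigcap G$, since conditions are only $\SIGMA^0_2$ sets. To handle this we use the argument in the proof of Lemma \ref{LemmaForcingTheorem}, which shows $X = g$ and that $c_A \in G$ exactly when $A$ contains a basic interval around $g$. If necessary we refine to closed subconditions, so that membership is transparent.

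\emph{Random $\Rightarrow$ generic.} Let $X$ be definably $M$-random and put
\[G_X = \{c_A \in \mathcal{B}_M : X \in A\}.\]
The steps are as follows.
\begin{enumerate}
\item $G_X$ is a filter. It is upward closed: if $c_B \leq c_A$ then $B \setminus A$ is a measure-zero set, and it is $\SIGMA^0_2$ after shrinking to a $\SIGMA^0_2$ part, using Yu's theorem and the fact that the complement of a $\SIGMA^0_2$ set is $\PI^0_2$. Since this set is definable over $M$, randomness of $X$ gives $X \in A$. Compatibility is clear, because $A \cap B$ is $\SIGMA^0_2$, contains $X$, and has positive measure. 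This last fact holds because the measure-zero case would contradict randomness.
\item $G_X$ meets every $M$-definable dense class $D$. By the remark after the definition of genericity we may replace $D$ by a maximal antichain. The key step is to form
\[N = [0,1] \setminus \bigcup\{A : c_A \in D\}\]
and show that it is contained in an $M$-definable measure-zero $\SIGMA^0_2$ set; randomness of $X$ then puts $X$ in some $A$ with $c_A \in D$.
\item Finally, $X = \bigcap G_X$, since the intervals $A_{n,j}$ with $X \in A_{n,j}$ already pin $X$ down.
\end{enumerate}

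\emph{Main obstacle.} The hard step is (2). Classically a maximal antichain in the measure algebra is countable, so its union is a Borel set of full measure. Here the antichain is only a class of $M$ and might be non-standard, so we argue inside $M$. Working in $M$, using its definable well-order $<_M$ and full \textsc{Foundation}, we build by a definable recursion on the naturals a sequence $c_{A_k}$ with the following property: each $A_k$ lies below a member of $D$, the $A_k$ are pairwise disjoint, and at stage $k$ we choose a $<_M$-least member that captures at least half of the maximal available measure. The resulting sequence is an $M$-definable $\omega$-sequence, but we need it as a single code in $M$. Collection is not available, so we cannot simply gather the sequence. We would aim to circumvent this as follows. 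The stage-$k$ choices are bounded in $L^M_a$ for some $a$, and by \textsc{Foundation} (as in the proof of Lemma \ref{LemmaNonstandardOrdinals}) no definable $\omega$-sequence can be cofinal in $\Ord^M$. Hence the whole sequence lies in some $L^M_a$ and is $\Delta_0$-separable there. The complement of $\bigcup_k A_k$ is then an $M$-coded $\PI^0_2$ set, and by the usual exhaustion argument it has measure zero. Using Yu's theorem once more, we cover it by a measure-zero $\SIGMA^0_2$ set definable over $M$, which gives the set required in step (2) and completes the proof.
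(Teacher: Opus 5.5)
Your argument for random $\Rightarrow$ generic has two genuine gaps. The converse direction is essentially fine, apart from the smaller points at the end.

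\textbf{First gap: the appeals to Yu's theorem go in the wrong direction.} Yu's theorem gives a $\SIGMA^0_2$ \emph{subset} of the same measure. To conclude $X\notin E$ for a null set $E$, you need a null test set \emph{containing} $E$. You need this for $E=B\setminus A$ (a $\SIGMA^0_2\cap\PI^0_2$ set) in step (1), and for $E=[0,1]\setminus\bigcup_kA_k$ (a $\PI^0_2$ set) in step (2).

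Such a $\SIGMA^0_2$ cover need not exist. A closed null set is nowhere dense, so a null $\SIGMA^0_2$ set is meager and can never contain a dense $G_\delta$ set.

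In fact, with randomness tested only against null $\SIGMA^0_2$ sets, the implication is false. Let $N=\bigcap_nU_n$, with each $U_n$ recursive, open, dense, and $\lambda(U_n)\le 2^{-n}$. Let $X$ meet every $U_n$ and every dense open set definable over $M$.
\begin{itemize}
\item $X$ avoids every null closed set definable over $M$, so it is ``definably $M$-random'' in the literal sense.
\item But $X\in N$. Hence $c_{[0,1]}\le c_{[0,1]\setminus N}$ while $X\notin[0,1]\setminus N$, so your $G_X$ is not upward closed.
\item Moreover, $\{c_B:\exists n\,(B\cap U_n=\varnothing)\}$ is a dense class, arithmetically definable by Lemma~\ref{LemmaEmptyClosedSets}, which keeps every definably generic real out of $N$.
\end{itemize}
You need randomness against null $\PI^0_2$ sets, i.e.\ the Martin-L\"of tests of the remark after the definition, which is what the classical argument uses. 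With that notion both sets $E$ above are handled directly and no covering is required.

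\textbf{Second gap: the exhaustion in step (2).}
\begin{itemize}
\item \textsc{Foundation} does not bound definable $\omega$-sequences in $\Ord^M$. Take $M=L_{\omega_1^{\mathrm{ck}}}$, an $\omega$-model of $\mathsf{BST}+V=L$ containing every recursive wellorder. There, the map sending $n$ to the order type of the $n$th recursive linear order (or $0$ if it is illfounded) is definable and cofinal. The argument of Lemma~\ref{LemmaNonstandardOrdinals} concerns the wellfounded part of $M$, not $\Ord^M$.
\item Even a bounded sequence, if it is defined by quantifying over the class $D$ and over all conditions, is not obtained by $\Delta_0$-separation.
\item Without an $M$-code for $R=[0,1]\setminus\bigcup_kA_k$, you cannot apply density of $D$ to $R$. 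The fact that $s_k\to0$ only tells you that every condition meets $\bigcup_kA_k$ in positive measure, not that $\lambda(R)=0$.
\end{itemize}

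This cannot be repaired for arbitrary $M$ of the kind considered. Suppose $M$ admits a definable $\omega$-enumeration $(c_{C_k})_k$ of $\mathcal{B}_M$, as $L_{\omega_1^{\mathrm{ck}}}$ does via notations in $\mathcal{O}$. Choose rational intervals $J_k$ of length $<2^{-k-2}$ with $\lambda(C_k\cap J_k)>0$. Then $\{c_C:\exists k\,(C\subseteq J_k)\}$ is a definable dense class. So every definably generic real lies in $U=\bigcup_kJ_k$, which has measure $<1/2$, while the definably random reals have measure one.

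The ZFC argument invoked by the paper's one-line proof obtains the antichain as a countable set of the ground model. Here that requires an additional property of $M$, for instance that every definable dense class contains a maximal antichain coded in $M$ by an $\omega$-sequence. Your proposal does not provide this.

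\textbf{Smaller points in generic $\Rightarrow$ random.}
\begin{itemize}
\item For $c_B\in G$, the fact $X\in B$ is immediate from $X=\bigcap\{A:c_A\in G\}$; no extra argument is needed.
\item Your description of $G$ via basic intervals around $g$ is false: a nowhere dense closed set of positive measure can lie in $G$.
\item If $c_N\notin M$, the code of $A\setminus N_i$ need not lie in $M$. Use $A\cap J$ instead, for a rational interval $J$ missing $N_i$ with $\lambda(A\cap J)>0$.
\end{itemize}
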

\proof
This is proved within $\mathsf{ATR}_0$ the same way as it is in the case of $\mathsf{ZFC}$ (where one deals with full genericity), arguing similarly to the proof of \eqref{eqGenericReal} within Lemma \ref{LemmaForcingTheorem}; see e.g., Jech \cite{Je03}.
\endproof

Below, we recall that $\wo$ denotes the class of recursive wellorders.

\begin{lemma}\label{LemmaRandomRecursive}
Suppose $X$ is definably $M$-random and $Y \in \wo^X$. Then, $Y$ is isomorphic to some $Z \in \wo$.
\end{lemma}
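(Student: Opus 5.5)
The plan is to run the classical argument that a random real does not enlarge $\omega_1^{ck}$ (Sacks' theorem for $\Delta^1_1$-randoms, or the forcing-theoretic argument that random forcing adds no new ordinals), carried out inside $M$ and $M[G]$. Let $Y \in \wo^X$ be given by an index $e$, so $Y = \Phi_e^X$ is a wellorder. By Lemma \ref{LemmaFsigmaDense}, we write $X = g$ for some definably $M$-generic $G \subset \mathcal{B}_M$, and form $M[G]$ as in Lemma \ref{LemmaForcingTheorem}. Since $M[G]$ is an $\omega$-model containing $g$, it contains $Y = \Phi_e^g$, and it computes $Y$ correctly. By Lemma \ref{LemmaFoundation}, $M[G]$ satisfies full \textsc{Foundation}, so it cannot see a descending sequence through $Y$. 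Since $Y$ is recursive in $g$, I would argue (via a rank-function construction by foundation on $Y$) that $M[G]$ contains an order-isomorphism from $Y$ onto some ordinal $a \in \Ord^{M[G]} \cong \Ord^M$. Because $Y$ is genuinely wellfounded in $V$, $a$ lies in the wellfounded part of $\Ord^M$.

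The key step is to produce a \emph{recursive} wellorder of type at least that of $Y$. I would avoid having to show that the wellfounded part of $M$ is exactly $\omega_1^{ck}$, and instead use measure and forcing directly. Let $c_A \in G$ force ``$\Phi_e^{\dot g}$ is a wellorder isomorphic to an ordinal $< \check a$''. Then consider the recursive tree, or linear order, $Z$ obtained by the Kleene–Brouwer style construction. Take the set of pairs $(n, s)$, with $s$ a finite string, for which the measure of $\{x \in A : \Phi_e^x \text{ is consistent with } s \ldots\}$ is positive. Ordering these appropriately yields a linear order $Z$ which is $\Delta^0_n$ in $c_A$, hence arithmetical. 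Its wellfoundedness follows from the fact that a descending sequence in $Z$ would yield, by $\sigma$-additivity (Lemma \ref{LemmaMeasureZeroSigma2}), a positive-measure set of reals $x$ for which $\Phi_e^x$ is illfounded. Since $A \in M$ has positive measure and $M$ is correct on arithmetical facts, that would contradict what $c_A$ forces. Moreover, $Y$ embeds into $Z$, since $X$ is random and lies in $A$. Because $c_A \in M \cap \mathcal{P}(\mathbb{N})$, and the relevant sets are coded in $M$, I would finally need $Z$ to be isomorphic to an element of $\wo$.

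This last point is the main obstacle: the parameter $c_A$ need not be recursive. I would first try to handle it by noting that conditions can be taken to be $\SIGMA^0_2$ sets coded in $M$, and these can be replaced by conditions of the form ``basic open interval'' with measure-density arguments. Alternatively, I would use $\SIGMA^0_2$ sets given by recursive codes, since every positive-measure $\SIGMA^0_2$ set contains a rational interval. The tree $Z$ then only depends on the recursive data $e$ and a rational interval, and so is recursive (or hyperarithmetical, and then replaced by a recursive wellorder of the same type via the standard $\Sigma^1_1$-bounding-free trick of taking the Kleene–Brouwer order of a recursive tree). This gives $Z \in \wo$, and an embedding of $Y$ onto an initial segment of $Z$ then gives the desired isomorphism with an element of $\wo$ after restricting to that initial segment, which is again recursive in the relevant sense by a further Kleene–Brouwer adjustment.
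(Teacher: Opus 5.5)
There is a genuine gap, in two places.

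\textbf{The isomorphism in $M[G]$.} You claim that $M[G]$ contains an isomorphism of $Y$ onto an ordinal ``via a rank-function construction by foundation''. This is not justified. $M$ and $M[G]$ satisfy only $\mathsf{BST}$, which has no $\Delta_0$-collection, so you cannot assemble the isomorphisms of initial segments of $Y$ into a single set by recursion along $Y$. \textsc{Foundation} alone does not provide this. Nor is this a preliminary step: it is essentially the whole content of the lemma. In the construction of Lemma \ref{LemmaNonstandardOrdinals}, $\Ord^M$ is isomorphic to a recursive linear order, so every wellfounded $M$-ordinal is already the type of an element of $\wo$. Such an isomorphism would therefore finish the proof outright.

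\textbf{The construction of $Z$.} This step fails for several reasons.
\begin{enumerate}
\item You derive wellfoundedness of $Z$ from what $c_A$ forces. But wellfoundedness is $\Pi^1_1$, and $M$ is a non-standard $\omega$-model that is correct only about arithmetical statements; it regards its own non-standard ordinals as wellordered. So the fact that $c_A$ forces ``$\Phi_e^{\dot g}$ is a wellorder'' says nothing about whether $\Phi_e^x$ is truly wellfounded for random $x\in A$. A positive-measure set of $x$ with $\Phi_e^x$ illfounded therefore contradicts nothing.
\item A branch through a tree of positive-measure nodes only gives a decreasing sequence of positive-measure sets. Their intersection may be null unless you fix a uniform rational lower bound.
\item The embedding of $Y$ into $Z$ is asserted without argument, and the natural rank comparison fails: $\bigcup_n S_n$ can have measure $>q$ while each $S_n$ has measure $\le q$.
\item The parameter problem is not resolved. $c_A$ is an arbitrary real of $M$, so ``arithmetical in $c_A$'' gives a recursive order type only if you already know something like the lemma for $c_A$, which need not be random.
\item A positive-measure $\SIGMA^0_2$ set need not contain an interval; a fat Cantor set is a counterexample. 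Replacing $c_A$ by a rational interval also discards the forcing information.
\end{enumerate}

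\textbf{What the paper does instead.} The missing idea is to argue by contradiction, which supplies exactly the $\omega_1^{ck}$ you tried to avoid.
\begin{enumerate}
\item Suppose $Y$ is not isomorphic to any element of $\wo$. By comparability of wellorders, every $Z\in\wo$ is isomorphic to an initial segment of $Y$.
\item Hence a recursive linear order is wellfounded if and only if it is isomorphic to an initial segment of $Y$. This makes $\wo$ a $\Delta^1_1(Y)$ set, so $\mathcal{O}$ exists by $\Delta^1_1$-comprehension, and $L_{\omega_1^{ck}}$ exists by $\atr$.
\item Since $X$ is definably $M$-random, $M[X]$ is a definably generic extension of $M$ with $\Ord^{M[X]}\cong\Ord^M$.
\item $M[X]$ contains the wellorder $Y$ of length at least $\omega_1^{ck}=\wfp(M)$. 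This yields a failure of $\Pi_1$-\textsc{Foundation} in $M[X]$, contradicting Lemma \ref{LemmaFoundation}.
\end{enumerate}
No measure computation or recursive tree is needed. Randomness is used only to make $M[X]$ a generic extension in which \textsc{Foundation} holds.
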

\proof
Suppose otherwise and let $Y\in \wo^X$ be a counterexample. By comparability of wellorders, every $Z \in \wo$ is isomorphic to an initial segment of $Y$. Thus, a recursive linear ordering $Z$ is wellfounded if and only if there is an isomorphism from $Z$ to some initial segment of $Y$. It follows that $\wo$ is $\Delta^1_1(Y)$. By $\Delta^1_1{-}\ca$, $\mathcal{O}$ exists and thus $L_{\omega_1^{ck}}$ exists by $\atr$. By hypothesis, $X$ is $M$-random, so $M[X]$ is a generic extension of $M$ by $\mathcal{B}_{M}$, by Lemma \ref{LemmaNonstandardOrdinals} and Lemma \ref{LemmaForcingTheorem}. Since $X$ computes a wellordering of length $\omega_1^{ck} = \wfp(M)$, $M[X]$ satisfies a failure of $\PI_1$-\textsc{Foundation}, contradicting Lemma \ref{LemmaFoundation}.
\endproof

\subsection{Spector-Gandy representations}\label{SectSG}
We continue assuming $\mathsf{ATR}_0$ and let $M$ be as above.
\begin{lemma}\label{LemmaSG}
For every $\Pi^1_1$ formula $\psi(Y)$, 
there is a $\Sigma_1$ formula $\psi^*$ such that the following are equivalent for every real $Y$:
\begin{enumerate}
\item $\psi(Y)$, and
\item $\exists \alpha \in \wo^Y\, (L_\alpha[Y]\models\psi^*(Y))$.
\end{enumerate}
Moreover, the mapping $\psi\mapsto\psi^*$ is recursive.
\end{lemma}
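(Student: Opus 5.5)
We follow the classical Spector–Gandy argument, taking care that every step can be carried out in $\atr$, where $\omega_1^Y$ need not exist as an object. By the Kleene normal form (provable in $\aca$), we first write $\psi(Y)$ as $\forall f\in\mathbb{N}^{\mathbb{N}}\,\exists n\,R(Y,f\upharpoonright n)$ with $R$ recursive. Equivalently, the tree
\[T_Y=\{\sigma : \neg\exists n\le|\sigma|\,R(Y,\sigma\upharpoonright n)\}\]
is wellfounded. $T_Y$ is recursive in $Y$, uniformly, via a fixed index $e$. We then let $\psi^*(Y)$ be the $\Sigma_1$ formula
\[\exists r\,\big(r \text{ is a rank function } T_Y\to\Ord\big),\]
that is, a function with $r(\tau)<r(\sigma)$ whenever $\tau$ properly extends $\sigma$. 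Since $T_Y$ is $\Delta_0$-definable from $Y$ and $\omega$, the map $\psi\mapsto\psi^*$ only substitutes the index $e$ into a fixed template, so it is recursive.

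For (2)$\Rightarrow$(1), suppose $L_\alpha[Y]\models\psi^*(Y)$ with $\alpha\in\wo^Y$. Then the witnessing $r$ is a genuine map into a wellorder. We can externalize it: $L_\alpha[Y]$ exists as a coded model by $\atr$, since $\alpha$ is an actual wellorder. Then $r$ is an order-reversing map from $T_Y$ into $\alpha$, and $\Delta_0$ facts are absolute. Hence $T_Y$ has no infinite branch, since a branch would yield a descending sequence in $\alpha$. So $\psi(Y)$ holds.

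For (1)$\Rightarrow$(2), suppose $T_Y$ is wellfounded. The Kleene–Brouwer order $\mathrm{KB}(T_Y)$ is then a $Y$-recursive wellorder, so it lies in $\wo^Y$; this is provable in $\aca$. By $\atr$, we form the rank function $\rho$ of $T_Y$ by transfinite recursion along $\mathrm{KB}(T_Y)$, with values bounded by the order type $\beta$ of $\mathrm{KB}(T_Y)$. We then take $\alpha\in\wo^Y$ of type about $\beta+\omega$, which is still $Y$-recursive. The step needing care is checking that the rank function actually belongs to $L_\alpha[Y]$. To handle it, we note that $\rho$ is obtained by a $\Delta_0$-recursion over $\mathrm{KB}(T_Y)\in L_{\omega+1}[Y]$. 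Then, by the standard constructibility estimate that each stage of a recursion of length $\beta$ over $L[Y]$ appears by level roughly $\omega+\beta$, the function $\rho$ lies in $L_{\omega\cdot 2+\beta}[Y]$. If necessary, we can replace $\alpha$ by an ordinal of type $\omega\cdot(\beta+2)$, which is still in $\wo^Y$. This estimate, together with the formalization of "$L_\alpha[Y]$ exists and is correct about $\Delta_0$ properties of its elements" inside $\atr$, is where we expect the main obstacle. We plan to handle it by defining $L_\alpha[Y]$ through arithmetical transfinite recursion along $\alpha$ and verifying by transfinite induction along $\alpha$ (which is available in $\atr$ for arithmetical statements) that $\rho\upharpoonright\{\sigma:\rho(\sigma)<\gamma\}\in L_{\omega\cdot(\gamma+2)}[Y]$.

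Finally, the argument is uniform in $Y$ and uses only the normal form and $\atr$, so it yields the equivalence for every real $Y$.
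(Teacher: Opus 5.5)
Your argument is correct, but it is not the paper's route. The paper gives no argument of its own and simply invokes Simpson \cite[VIII.3.27]{Simpson} for the fact that the Spector--Gandy theorem is provable in $\mathsf{ATR}_0$.

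You instead give the classical self-contained proof: Kleene normal form, the $Y$-recursive tree $T_Y$, and a $\psi^*$ asserting that $T_Y$ has a rank function. With this choice, (2)$\Rightarrow$(1) uses only that the ordinals of the coded model $L_\alpha[Y]$ are wellfounded, together with $\Delta_0$-absoluteness. The direction (1)$\Rightarrow$(2) reduces to showing that the rank function appears at some $Y$-recursive level of $L[Y]$.

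You correctly isolate the one point specific to $\mathsf{ATR}_0$. Classically one gets $\rho\in L_{\omega_1^Y}[Y]$ from admissibility via $\Sigma_1$-recursion, but that is unavailable because $\omega_1^Y$ need not exist. You replace it with an explicit level bound proved by arithmetical transfinite induction along a $Y$-recursive wellorder.

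When you write this out, the limit step of your induction needs one small observation. At a limit $\lambda$, the hypothesis $\rho_\gamma\in L_{\omega\cdot(\gamma+2)}[Y]$ for $\gamma<\lambda$ suffices because $\rho_\lambda$ is definable over $L_{\omega\cdot\lambda}[Y]$. Namely, it is the union, cut down to values below $\lambda$, of all partial functions in that level that have extension-closed domain in $T_Y$ and satisfy the rank recursion. Every such function agrees with $\rho$, and each $\rho_\gamma$ is one of them. Alternatively, carry the whole sequence $\langle\rho_\eta:\eta\le\gamma\rangle$ through the induction.

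Your route buys a transparent proof tailored to the $L_\alpha[Y]$ formulation the paper actually uses later. The citation buys brevity and an already-verified formalization of $L_\alpha[Y]$ and its satisfaction relation inside $\mathsf{ATR}_0$, which your sketch only outlines.
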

\proof
This is a well-known result due to Spector and Gandy. By Simpson~\cite[VIII.3.27]{Simpson}, it is provable in $\mathsf{ATR}_0$.
\endproof

\begin{lemma}\label{LemmaSGRandom}
Let $A$ be a $\Pi^1_1$ set of reals. Then,
there is a $\Sigma_1$ formula $\varphi$ such that the following are equivalent for every real $X$ which is $M$-random:
\begin{enumerate}
\item $X\in A$, and
\item $\exists \alpha \in \wo\, (L_\alpha[X]\models\varphi(X))$.
\end{enumerate}
\end{lemma}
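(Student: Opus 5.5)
The plan is to combine the Spector--Gandy representation of Lemma \ref{LemmaSG} with Lemma \ref{LemmaRandomRecursive}, which says that for $M$-random reals the $X$-recursive wellorders are no longer than the recursive ones.

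Let $\psi(Y)$ be the $\Pi^1_1$ formula defining $A$, and let $\psi^*$ be the $\Sigma_1$ formula provided by Lemma \ref{LemmaSG}. Then for every real $X$,
\[X\in A \leftrightarrow \exists \beta\in\wo^X\,\big(L_\beta[X]\models\psi^*(X)\big).\]
We take $\varphi=\psi^*$. It then suffices to show that, for $M$-random $X$, the quantifier over $\wo^X$ can be replaced by a quantifier over $\wo$.

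\emph{From (2) to (1).} Suppose $\alpha\in\wo$ and $L_\alpha[X]\models\varphi(X)$. Every recursive wellorder is in particular $X$-recursive, so $\alpha\in\wo^X$. Lemma \ref{LemmaSG} then gives $\psi(X)$, i.e.\ $X\in A$. This direction does not use randomness.

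\emph{From (1) to (2).} Suppose $X\in A$. Lemma \ref{LemmaSG} yields $Y\in\wo^X$ with $L_{|Y|}[X]\models\varphi(X)$. Since $X$ is $M$-random, Lemma \ref{LemmaRandomRecursive} gives $Z\in\wo$ with $Z\cong Y$. The structure $L_\beta[X]$ depends only on the order type of $\beta$: it is built in $\atr$ by arithmetical transfinite recursion along the wellorder. An isomorphism $Y\cong Z$ therefore transports the hierarchy along $Y$ to the one along $Z$ level by level, by arithmetical transfinite induction. So $L_{|Z|}[X]\models\varphi(X)$ with $Z\in\wo$, which is (2).

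The only delicate point is this identification of $L_\alpha[X]$ for isomorphic codes. It should be routine in $\atr$ via Simpson's coding of $L_\alpha[X]$ through suitable trees, since uniqueness of hierarchies along a wellorder is provable there. One also needs Lemma \ref{LemmaSG} to quantify over codes in $\wo^Y$ in this invariant sense, which I take as given.
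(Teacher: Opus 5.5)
Your proposal is correct and follows the paper's route exactly: the paper's proof simply says the lemma is immediate from Lemma~\ref{LemmaSG} and Lemma~\ref{LemmaRandomRecursive}, taking $\varphi=\psi^*$. Your two directions, using $\wo\subseteq\wo^X$ for (2)$\Rightarrow$(1) and Lemma~\ref{LemmaRandomRecursive} plus invariance of $L_\alpha[X]$ under isomorphism of codes for (1)$\Rightarrow$(2), are just that argument written out.
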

\proof
Immediate from Lemma \ref{LemmaSG} and Lemma \ref{LemmaRandomRecursive}.
\endproof

We also mention the following converse to the Spector-Gandy theorem:
\begin{lemma}\label{LemmaKleeneRC}
For every $\Sigma_1$ formula $\psi$, 
there is a $\Pi^1_1$ formula $\psi^*$ such that the following are equivalent for every real $Y$:
\begin{enumerate}
\item $\psi^*(Y)$, and
\item $\exists \alpha \in \wo^Y\, (L_\alpha[Y]\models\psi(Y))$.
\end{enumerate}
Moreover, the mapping $\psi\mapsto\psi^*$ is recursive.
\end{lemma}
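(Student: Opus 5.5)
We prove the statement by writing down a concrete $\Pi^1_1$ formula that expresses the right-hand side. Given a $\Sigma_1$ formula $\psi$, we let $\psi^*(Y)$ say: ``for every linear order $a$ recursive in $Y$ (given by an index $e$) and every $Z$, if $Z$ is a code of a suitable-tree model of $\mathsf{BST}+V=L[Y]$ whose ordinals are isomorphic to $a$ (or at least an $\omega$-model coding $L_a[Y]$ in the sense of \cite[VII.3]{Simpson}), and if $Z\models\psi(Y)$, then \dots''. This is not yet right, because we want an existential over wellorders. The natural first attempt,
\[\exists e\,\big(e\in\wo^Y \wedge \exists Z\,(Z \text{ codes } L_{|e|}[Y] \wedge Z\models\psi(Y))\big),\]
has a $\Pi^1_1$ conjunct $e\in \wo^Y$ followed by a $\Sigma^1_1$ part. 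We therefore rewrite the second part in $\Pi^1_1$ form. We replace ``$\exists Z$'' by ``$\forall Z$ (if $Z$ is a code of the $L$-hierarchy over $Y$ along $e$, then $Z\models\psi(Y)$)''. The hypothesis ``$Z$ codes the $L[Y]$-hierarchy along $e$'' is arithmetical in $Z,Y,e$. So $\psi^*(Y)$ becomes
\[\exists e\,\Big(e\in\wo^Y\wedge \forall Z\,\big(\mathrm{Hier}(e,Y,Z)\to Z\models\psi(Y)\big)\Big),\]
where $\mathrm{Hier}$ is arithmetical. The number quantifier $\exists e$ in front of a $\Pi^1_1$ matrix stays $\Pi^1_1$ (provably in $\aca$), and the map $\psi\mapsto\psi^*$ is evidently recursive.

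To check the equivalence, we argue in $\atr$. Suppose $\alpha\in\wo^Y$ with index $e$. Then $\atr$ (via \cite[VII.3]{Simpson}) proves that a code $Z$ with $\mathrm{Hier}(e,Y,Z)$ exists. Uniqueness of such hierarchies along wellorders, proved by arithmetical transfinite induction, gives that any $Z$ satisfying $\mathrm{Hier}(e,Y,Z)$ is isomorphic to that one. Hence ``$L_\alpha[Y]\models\psi(Y)$'' is equivalent to $\forall Z\,(\mathrm{Hier}(e,Y,Z)\to Z\models\psi(Y))$, and also to the $\Sigma^1_1$ form. Satisfaction in a coded structure is arithmetical given $Z$, since $\aca$ yields the full satisfaction relation for countable coded models.

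The step needing the most care is the existence and uniqueness of the coded $L_\alpha[Y]$ for $\alpha\in\wo^Y$, so that the universal and existential readings agree. This is exactly where $\atr$, rather than $\aca$, is used, and I would cite Simpson's treatment of $L_\alpha$ in $\atr$ (VII.4) relativized to $Y$. A minor point is that ``$L_\alpha[Y]\models\psi(Y)$'' should be read with $L_\alpha[Y]$ as the coded structure. Its interpretation via suitable trees is standard under the identification with $\atr^{\mathrm{set}}$ fixed in the preliminaries.
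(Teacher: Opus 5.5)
Your argument is correct. The paper gives no argument of its own here; it only cites Simpson [VIII.3.20] for the fact that the lemma holds in $\atr$. You instead write the $\Pi^1_1$ formula $\exists e\,\bigl(e\in\wo^Y\wedge\forall Z\,(\mathrm{Hier}(e,Y,Z)\to Z\models\psi(Y))\bigr)$ explicitly. You then reduce the equivalence to the existence and uniqueness, up to isomorphism, of the coded hierarchy $L_{|e|}[Y]$ along a genuine $Y$-recursive wellorder, and you correctly identify this as the place where $\atr$ is needed. You never use that $\psi$ is $\Sigma_1$. That is fine: this (Kleene) direction holds for arbitrary $\psi$, and the $\Sigma_1$ restriction only matters for the converse, Lemma \ref{LemmaSG}.

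Two of your side justifications are wrong as stated, though neither affects the argument in the ambient theory $\atr$.

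First, absorbing the leading number quantifier is not free in $\aca$. The nontrivial direction of $\exists e\,\forall Z\,\theta(e,Z)\leftrightarrow\forall W\,\exists e\,\theta(e,W_e)$ is an instance of $\SIGMA^1_1$-$\mathsf{AC}_0$. That scheme is not provable in $\aca$, but it is provable in $\atr$.

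Second, $\aca$ does not yield full satisfaction relations for arbitrary countable coded structures. That requires $\omega$ iterated jumps, i.e.\ $\mathsf{ACA}^+_0$, which is again available in $\atr$. You do not need it, though. For the fixed formula $\psi$, the statement $Z\models\psi(Y)$ is directly arithmetical in $Z$ and $Y$: replace each set quantifier of $\psi$ by a number quantifier over the domain coded by $Z$.
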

\proof
This is a well-known result due to Kleene. For a proof that the lemma holds in $\atr$, see Simpson \cite[VIII.3.20]{Simpson}.
\endproof

\begin{corollary}\label{CorollaryDelta11Random}
The formula ``$Y$ is $\Delta^1_1$-random'' is $\Sigma^1_1$, provably in $\atr$.
\end{corollary}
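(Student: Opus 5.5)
We will show that $Y$ is $\Delta^1_1$-random if and only if $Y$ avoids every $\SIGMA^0_2$ null set coded hyperarithmetically, and then express this property in $\Sigma^1_1$ form. This uses the fact that a $\Delta^1_1$ null set is contained in a $\Delta^1_1$ $\SIGMA^0_2$ null set, by Yu's theorem, which is provable in $\atr$. Concretely, we fix the usual $\Pi^1_1$ coding of hyperarithmetical reals: for $e$ together with a notation $a\in\wo$, the code is the $e$-th set computable from the jump hierarchy along $a$. Then the statement ``$Y$ is $\Delta^1_1$-random'' reads
\[\forall a\,\forall e\,\big(a\in\wo \wedge \text{$\{e\}^{H_a}$ codes a $\SIGMA^0_2$ set $B$ with $\lambda(B)=0$} \to Y\notin B\big).\]
The hypothesis $a\in\wo$ occurs negatively, so it contributes a $\Sigma^1_1$ disjunct $a\notin\wo$. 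The remaining matrix mentions the hierarchy $H_a$. We treat it as ``for all $H$, if $H$ is a jump hierarchy along $a$ then \dots''. This is universal over $H$, so it is naively $\Pi^1_1$.

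To obtain $\Sigma^1_1$ instead, we use Lemma \ref{LemmaKleeneRC}/Lemma \ref{LemmaSG} in the Spector–Gandy direction. Non-randomness, i.e.\ ``$\exists a\in\wo\,\exists e$ with $Y\in B_{e,a}$ and $\lambda(B_{e,a})=0$'', is $\Pi^1_1$, since ``$\exists a\in\wo$'' together with a $\Delta^1_1$-in-$a$ matrix is $\Pi^1_1$. Hence randomness is $\Sigma^1_1$. The key point is that when $a\in\wo$, $\atr$ proves that $H_a$ exists and is unique. Therefore ``$Y\in B_{e,a}\wedge\lambda(B_{e,a})=0$'' can be written either as $\exists H$ (hierarchy and property) or as $\forall H$ (hierarchy $\to$ property). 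The null-measure clause is arithmetical in the code by Lemma \ref{LemmaMeasureZeroSigma2}, and membership of $Y$ in a $\SIGMA^0_2$ set is arithmetical. Using the $\forall H$ form together with the $\Pi^1_1$ clause $a\in\wo$ makes the whole formula $\exists a\,\exists e\,(\Pi^1_1)$. This is $\Pi^1_1$ because number quantifiers are absorbed, which is provable in $\aca$. Negating it gives the desired $\Sigma^1_1$ formula.

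The main obstacle is justifying, inside $\atr$, that the $\Delta^1_1$ sets are exactly those of the form $B_{e,a}$ with $a\in\wo$, where $\wo$ is the class of recursive wellorders. Without $\omega_1^{ck}$ existing, one cannot quantify over $\Delta^1_1$ definitions by their indices directly. We handle this with Lemma \ref{LemmaSG}: a $\Delta^1_1$ code $c$ satisfies a $\Pi^1_1$ definition, which becomes $\exists\alpha\in\wo\,L_\alpha\models\psi^*$, so $c\in L_\alpha$ for some $\alpha\in\wo$. Since $L_\alpha$ exists by $\atr$ for $\alpha\in\wo$, we replace the hyperarithmetic hierarchy by the uniformly $\Delta^1_1$-in-$\alpha$ structures $L_\alpha$, and the computation above goes through verbatim. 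The conversion of Borel null sets to $\SIGMA^0_2$ null sets is done inside $L_\alpha$ for larger $\alpha\in\wo$ via Yu's theorem.
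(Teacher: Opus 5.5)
Your overall route is the paper's. You write non-randomness as $\exists\alpha\in\wo$ followed by a matrix that is $\DELTA^1_1$ in $\alpha$ (since $L_\alpha$, or $H_a$, exists uniquely when $\alpha\in\wo$). That makes it $\PI^1_1$, so randomness is $\SIGMA^1_1$. This is exactly the instance of Lemma~\ref{LemmaKleeneRC} the paper uses, via ``$\Delta^1_1$-random iff $L_\alpha$-random for every $\alpha\in\wo$''.

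However, the reduction you build on is false. Yu's theorem gives \emph{inner} $\SIGMA^0_2$ approximations ($B\subseteq A$ with $\lambda(B)=\lambda(A)$). Applied to a null set it produces no $\SIGMA^0_2$ null cover, and in general none exists. A $\SIGMA^0_2$ null set is a countable union of closed null sets, each nowhere dense, so it is meager. By contrast, the recursively coded null set $N=\bigcap_n\bigcup_k\big(q_k-2^{-n-k-1},q_k+2^{-n-k-1}\big)$, with $q_k$ enumerating the rationals, is a dense $G_\delta$ and hence comeager. By the Baire category theorem, $N$ lies in no $\SIGMA^0_2$ null set. So your displayed formula defines a strictly weaker, Kurtz-type notion: a real Cohen-generic over the hyperarithmetic sets satisfies it but lies in $N$. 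As written, you have computed the complexity of the wrong formula.

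The repair leaves your computation intact. Apply Yu's theorem to the complement, so that a null Borel set coded in $L_\alpha$ is covered by a $\PI^0_2$ null set coded in some larger $L_{\alpha'}$ (the same conversion step you describe, with the direction corrected), and quantify over $\PI^0_2$ codes instead. Membership of $Y$ in a $\PI^0_2$ set is arithmetical, and so is nullness of a $\PI^0_2$ code (this is the last step of the proof of Lemma~\ref{LemmaMeasureZeroSigma2}), so the quantifier count is unchanged. The paper's one-line argument is likewise indifferent to the covering class.

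Two smaller points.
\begin{enumerate}
\item Absorbing $\exists a\,\exists e$ into a $\PI^1_1$ formula uses $\SIGMA^1_1$-$\mathsf{AC}_0$, which is available in $\atr$, not merely $\aca$.
\item Your argument that every $\Delta^1_1$ code lies in some $L_\alpha$ with $\alpha\in\wo$ does not follow from Spector--Gandy applied to a $\PI^1_1$ definition alone. The same reasoning would place $\mathcal{O}$ in such an $L_\alpha$. For a real parameter, Spector--Gandy gives $\alpha\in\wo^c$ and $L_\alpha[c]$, not $c\in L_\alpha$. Applied pointwise, it gives a separate $\alpha_n\in\wo$ for each $n$. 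Gathering these below one $\alpha\in\wo$ is a $\SIGMA^1_1$-bounding argument that also uses the $\Sigma^1_1$ definition of $c$; this is provable in $\atr$ because $\wo$ is not $\Sigma^1_1$.
\end{enumerate}
The paper sidesteps the second point by taking the $L_\alpha$ characterization as the meaning of $\Delta^1_1$-randomness.
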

\proof
This follows from Lemma \ref{LemmaKleeneRC}, as $Y$ is $\Delta^1_1$-random if and only if it is $L_\alpha$-random for every $\alpha\in \wo$.
\endproof

\subsection{Approximating sequences and nonstandard models}\label{SectionApprox}
From now on, we work in a model $V$ of $\mathsf{ATR}_0$ (or, equivalently, $\mathsf{ATR}_0^{\text{set}}$) satisfying $\SIGMA^1_1$-Induction. We fix a $\Pi^1_1$-definable class of reals $A$ and fix a formula $\varphi$ as in Lemma \ref{LemmaSGRandom}, for some model $M$ satisfying the conclusion of Lemma \ref{LemmaNonstandardOrdinals}. Note that $M$ is non-standard, so it may well disagree with $V$ concerning membership in $A$. Nonetheless, we shall use $M$ as an aid in establishing the Lebesgue-regularity of $A$. The proof will relativize easily to real parameters, thus yielding the conclusion for all co-analytic sets.

By Lemma \ref{LemmaInductionComprehension}, we have access to finite $\SIGMA^1_1$-Comprehension. 
In order to prove Lebesgue-regularity, we show that the intersection of
$A$ with a measure-one set (namely, $R(M)$) can be approximated in measure by open sets from outside and by compact sets from inside.
If so, we will have shown that 
\begin{equation}\label{eqRegularityRandom}
\lambda^*(A) = \lambda^*(A\cap R(M)) = \lambda_*(A\cap R(M)) =\lambda_*(A),
\end{equation}
as needed.

We write $M = L_a$, where $a = \Ord^M$ is non-standard, so $M$ is an $\omega$-model of $\mathsf{ACA}_0$. 
Recall that the notation $c_B$ denotes some Borel code for a Borel set $B$. Although this is ambiguous as a set might have different codes, we believe this will lead to no confusion.
Below, we may use the forcing relation $\Vdash^N_{\mathbb{P}}$ for the sake of clarity. This notation emphasizes that $\mathbb{P}$ is the partial order under consideration and that the relation is computed within the model $N$.
Observe that if $X$ is $M$-random, then
\begin{align}
\nonumber
X \in A
&\leftrightarrow \exists \alpha \in \wo\, \big(L_\alpha[X]\models\varphi(X)\big) 
	&\text{by Lemma \ref{LemmaSGRandom}}\\
&\leftrightarrow \exists \alpha \in \wo\, \exists c_B \in \mathcal{B}_{L_\alpha} \big(X\in B \wedge c_B\Vdash^{L_\alpha}_{\mathcal{B}_{L_\alpha}} \varphi(\check g)\big)
	&\text{by Lemmata \ref{LemmaFsigmaDense} and \ref{LemmaForcingTheorem}} \label{eqFsigma}
\end{align}

Below, we recall the G\"odel wellordering $<_L$. This is a uniformly definable $\Sigma_1$ wellordering of $L$ witnessing the Axiom of Global Choice in $L$. We use the notation $<_M$ for the corresponding wellordering of $M$ (defined by the same formula). The following definition will be a key part of the proof. 
\begin{definition}
Now, we say that a sequence $S = \{(\alpha_i, c_{B_i}, w_i) : i < b\}$ is an \textit{approximation} to $A$  \textit{with support} $\supp(S)$ and \textit{length} $b \in \Ord^M$ if
\[S \in L_{\supp(S)+1}^M\setminus L_{\supp(S)}^M\]
and for each $i <b$, $\alpha_i<a$ is the least $M$-ordinal for which there exist $c_{B_i},w_i \in L_{\alpha_i}^M$ satisfying the following conditions \eqref{DefApprox2}--\eqref{DefApprox4}:
\begin{enumerate}
\item\label{DefApprox2} $c_{B_i} \in L_{\alpha_i}^M$ is a code for a $\SIGMA^0_2$ set $B_i$ satisfying the following:
\begin{enumerate}
\item $\lambda(B_i) > 0$.
\item $c_{B_i}\Vdash^{L_{\alpha_i}}_{\mathcal{B}_{L_{\alpha_i}}}\varphi(\check g)$.
\item $B_i \cap \bigcup_{j<i} B_j = \varnothing$.
\end{enumerate} 
\item\label{DefApprox3} $c_{B_i}$ is $<_{M}$-least for which \eqref{DefApprox2} holds (with $\alpha_i$ fixed).
\item\label{DefApprox4} $w_i \in L_{\alpha_i}^M$ is the $<_{M}$-least witness to the $\Sigma_1$ formula $\theta^*$ given by Lemma \ref{LemmaSG}, where $\theta$ is the $\PI^1_1$ formula 
\[\forall Y\, \big( Y \in B_i \wedge \text{$Y$ is $\Delta^1_1$-random} \to Y \in A\big).\]
\end{enumerate}
\end{definition}
Here, we note that ``$Y$ is $\Delta^1_1$-random'' is $\Sigma^1_1$ by Corollary \ref{CorollaryDelta11Random}.
Note that the sets $B_i$ with codes $c_{B_i}$ in an approximation to $A$ might have different interpretations in $M$ and in $V$; condition \eqref{DefApprox4} ensures that $M$ thinks these sets are all contained in $A$, except possibly for a small set. Recall that by Lemma \ref{LemmaMeasureZeroSigma2}, the formula $\lambda(B_i) > 0$ is arithmetical and in particular absolute to $M$. By Lemma \ref{LemmaEmptyClosedSets}, the formula $B_i \cap \bigcup_{j<i} B_j = \varnothing$ is also arithmetical and thus also absolute to $M$; thus, it does not matter whether condition \eqref{DefApprox2} is checked in $V$ or in $M$. 

We will be interested in approximations to $A$ belonging to wellfounded initial segments of $M$. For these, the witnesses occurring in condition \eqref{DefApprox4} will really be witnesses for the truth of the $\PI^1_1$ formula $\theta$. Since we do not have access to $\PI^1_1{-}\mathsf{CA}_0$, we must allow for the possibility that approximations $S$ (as well as their supports) are illfounded, and attempt to make the most of the situation nonetheless.

\begin{claim}\label{ClaimUnique}
Suppose that $\beta \in \wo$. Then, there is at most one approximation $S$ to $A$ with length $\beta$ and any approximation with greater length extends $S$.
\end{claim}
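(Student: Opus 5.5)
The plan is to argue that the defining conditions of an approximation pin down each entry $(\alpha_i, c_{B_i}, w_i)$ from the entries with smaller index. Uniqueness then follows by transfinite induction along the wellorder $\beta$. Fix two approximations $S=\{(\alpha_i,c_{B_i},w_i):i<\beta\}$ and $S'=\{(\alpha'_i,c_{B'_i},w'_i):i<b'\}$ with $\beta\le b'$ in $\Ord^M$; since $\beta\in\wo$ and $M$ has ordinals of every type in $\wo$ (Lemma \ref{LemmaNonstandardOrdinals}), I identify $\beta$ with an initial segment of $\Ord^M$. The goal is to show $S'\upharpoonright\beta=S$. When $b'=\beta$ this gives uniqueness, since an approximation is determined as a set by its entries. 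The support is then also determined, as the rank of $S$ in the $L^M$-hierarchy.

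The key step is the local determination. Suppose $S\upharpoonright i=S'\upharpoonright i$ for some $i<\beta$, so $\{B_j:j<i\}$ and $\{B'_j:j<i\}$ are given by the same codes. Condition \eqref{DefApprox2} then depends only on $\alpha$, on the candidate code $c_B$, and on this common set of codes. Hence $\alpha_i$ and $\alpha'_i$ are both the least $M$-ordinal admitting some $c_B,w\in L^M_\alpha$ satisfying \eqref{DefApprox2}--\eqref{DefApprox4}, so $\alpha_i=\alpha'_i$. Then \eqref{DefApprox3} forces $c_{B_i}=c_{B'_i}$ as the $<_M$-least code. Finally \eqref{DefApprox4} forces $w_i=w'_i$ as the $<_M$-least witness to $\theta^*$ for that $B_i$. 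I will make sure all of these notions are evaluated the same way for both sequences: leastness, $<_M$, and the forcing relation $\Vdash^{L_{\alpha}}$. This holds because they are all computed inside $M$, and by Lemmas \ref{LemmaMeasureZeroSigma2} and \ref{LemmaEmptyClosedSets} the arithmetical clauses are absolute. So nothing depends on the ambient sequence beyond its restriction to $i$.

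The main obstacle is carrying out the induction. The set $\{i<\beta : S\upharpoonright(i+1)\neq S'\upharpoonright(i+1)\}$ must be a set in $V$ so that it has a least element. I would obtain it by arithmetical comprehension, since $S,S'$ are elements of the coded model $M$ and comparing their entries is first-order over the code of $M$. Because $\beta\in\wo$ is a genuine wellorder, a nonempty such set has a least element $i$. Then $S\upharpoonright i=S'\upharpoonright i$, and the local step gives agreement at $i$, a contradiction. Here wellfoundedness of $\beta$ is essential. For illfounded lengths this induction would be unavailable in $V$, and inside $M$ it would need the definability of the relevant conditions, which is why the claim is restricted to $\beta\in\wo$. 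Alternatively, one could run the induction inside $M$ using \textsc{Foundation} (the disagreement set is definable over $M$). This would even give the claim for arbitrary $M$-lengths, but the external argument suffices.
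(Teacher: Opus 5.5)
Your proposal is correct and follows the paper's argument. The paper likewise proves the claim by an arithmetical transfinite induction along $i<\beta$, which is legitimate because $\beta\in\wo$ is realized as an $M$-ordinal and all quantifiers range over elements of the coded model $M$, with each entry $(\alpha_i,c_{B_i},w_i)$ pinned down by the leastness clauses from the earlier entries. You spell out the pairwise comparison of $S$ with $S'$, the local determination step, and the least-disagreement set in more detail than the paper's two-line proof does.
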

\proof
Since $\beta \in \wo$, we have $\beta \in M$ (by choice of $M$).
By arithmetical transfinite induction on $i<\beta$ we see that there is at most one $M$-least ordinal $\alpha_i$ for which there are $c_{B_i}, w_i \in L_{\alpha_i}^M$ satisfying \eqref{DefApprox2}--\eqref{DefApprox4}. This transfinite induction is indeed arithmetical as all quantification ranges only over elements of $M$.
\endproof


\begin{definition}
Let $w \in M$. We say that $w$ is \textit{standard} if there is a wellorder $\alpha$ such that $w \in L_{\alpha+1}\setminus L_\alpha$, and \textit{nonstandard} otherwise.
\end{definition}

\begin{claim}\label{ClaimNonstandardApprox}
Suppose for all $\beta \in \wo$ there is an approximation with support greater than $\beta$. Then there is an approximation with nonstandard support $b$.
\end{claim}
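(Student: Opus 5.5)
The plan is an overspill argument, using that the recursive wellorders form a $\Pi^1_1$-complete set, exactly as in the proof of Lemma \ref{LemmaNonstandardOrdinals}. Consider the formula
\[\chi(e) \equiv e \text{ is (the index of) a recursive linear order and } \exists S\in M\,\big(S \text{ is an approximation to } A \text{ and } \supp(S)\succeq e\big),\]
where ``$\supp(S)\succeq e$'' means that there is an order-embedding of $e$ onto an initial segment of (the $M$-ordinals below) $\supp(S)$. Whether $S\in M$ is an approximation to $A$ is arithmetical in the code of $M$: every clause of the definition quantifies only over elements of $M$, and the measure-theoretic clauses are arithmetical and absolute to $M$ by Lemmas \ref{LemmaMeasureZeroSigma2} and \ref{LemmaEmptyClosedSets}. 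The embedding is a single existential set quantifier. Hence $\chi$ is $\SIGMA^1_1$, with the code of $M$ as parameter.

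By hypothesis, every $e\in\wo$ satisfies $\chi$. Since $\wo$ is $\Pi^1_1$-complete, provably in $\aca$, the $\SIGMA^1_1$ set $\{e : \chi(e)\}$ cannot coincide with $\wo$ unless $\wo$ is $\Delta^1_1$. Suppose first that $\wo$ is $\Delta^1_1$. Then $\mathcal{O}$ exists by $\DELTA^1_1{-}\mathsf{CA}_0$, and we may split off this case exactly as in Lemma \ref{LemmaNonstandardOrdinals}. In that case the wellfounded part of $M$ is available, and for $\beta\in\wo$ the approximations with support above $\beta$ can be compared via Claim \ref{ClaimUnique}. Alternatively, we simply choose $M$ so that this case does not arise. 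Otherwise there is some $e$ with $\chi(e)$ but $e\notin\wo$, so $e$ is illfounded. Let $S$ be the witnessing approximation with $b=\supp(S)$.

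It remains to see that $b$ is nonstandard, i.e.\ that there is no wellorder $\alpha$ with $S\in L_{\alpha+1}\setminus L_\alpha$. By the definition of an approximation, we have $S\in L^M_{b+1}\setminus L^M_b$. So if $S$ were standard, the corresponding level of $M$ would be wellfounded, and $b$ would be a wellordered $M$-ordinal. But $e$ embeds into an initial segment of $b$, and an illfounded order cannot embed into a wellorder. Hence $b$ is nonstandard, as required.

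The main obstacle is checking that $\chi$ really is $\SIGMA^1_1$ uniformly. In particular, the minimality clauses in the definition of an approximation (least $\alpha_i$, $<_M$-least $c_{B_i}$ and $w_i$) must involve only quantification over $M$, and must never refer to genuine truth of the $\PI^1_1$ formula $\theta$ in $V$. Since $w_i$ is only required to be the $<_M$-least witness inside $M$, this holds, and the argument goes through.
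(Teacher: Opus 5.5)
Your verification that $\chi$ is $\SIGMA^1_1$ is fine, and so is your handling of the case where some illfounded $e$ satisfies $\chi$. The other case, however, is a genuine gap. The formula $\chi$ has the code of $M$ as a parameter, so the $\Pi^1_1$-completeness of $\wo$ rules out $\{e:\chi(e)\}=\wo$ only when $\wo$ is not $\Delta^1_1(M)$. This differs from Lemma \ref{LemmaNonstandardOrdinals}, where the $\Sigma^1_1$ formula is lightface and so cannot define $\wo$. Here $\wo\in\Delta^1_1(M)$ is consistent with $\atr$ and with the way $M$ was chosen: it holds, for instance, whenever $\mathcal{O}$ exists and is arithmetical in the code of $M$. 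In that situation your argument yields nothing.

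Your fallback options do not close the gap. In Lemma \ref{LemmaNonstandardOrdinals} the existence of $\mathcal{O}$ helps because the goal there is to \emph{build} some model. Here you must find an approximation with nonstandard support inside an already fixed $M$, and neither the existence of $\mathcal{O}$ nor the uniqueness statement of Claim \ref{ClaimUnique} produces one. Re-choosing $M$ so that $\wo\notin\Delta^1_1(M)$ would require a Gandy-basis-type selection theorem, which you neither state nor prove in $\atr$.

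The missing idea, which is the paper's argument, is an \emph{internal} overspill using the full schema of \textsc{Foundation} in $M$; it needs no case split on whether $\mathcal{O}$ exists. Being an approximation is definable over $M$. Hence the downward closed class $D=\{c\in\Ord^M : \text{some approximation } S \text{ has } \supp(S)>c\}$ is also definable over $M$. By hypothesis, $D$ contains every standard ordinal. These are exactly the $M$-ordinals isomorphic to elements of $\wo$, since $\Ord^M$ is isomorphic to a recursive linear order in the construction of Lemma \ref{LemmaNonstandardOrdinals}. If $D=\Ord^M$ you are done, because $M$ is nonstandard. Otherwise \textsc{Foundation} in $M$ gives a least $c_0\notin D$, and $c_0$ must be illfounded. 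Below an illfounded ordinal there is always another illfounded $c_1<c_0$. Then $c_1\in D$ gives an approximation whose support is illfounded, hence nonstandard.
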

\proof
Note that the definition of an approximation is internal to $M$, so if there is an approximation to $A$ with standard support $\beta$, then it must belong to $M$ and $M$ knows that it is an approximation to $A$. 
Since $M$ is a model of the full schema of \textsc{foundation}, an overspill argument yields such a $b$.
\endproof


\begin{claim}\label{ClaimDisjoint}
Suppose $S_b$ is an approximation of length $b$, $(\alpha_i, c_{B_i}, w_i) \in S_b$ and $w_i$ is nonstandard. 
Then, $B_i$ is disjoint from $A\cap R(M)$.
\end{claim}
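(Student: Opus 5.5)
We argue by contradiction: suppose some real $X$ lies in $B_i\cap A\cap R(M)$, and aim to show that $w_i$ is then standard. The first step uses only that $X\in A$ and $X$ is $M$-random. By \eqref{eqFsigma} there are $\alpha\in\wo$ and a code $c_B\in\mathcal{B}_{L_\alpha}$ with $X\in B$ and $c_B\Vdash^{L_\alpha}_{\mathcal{B}_{L_\alpha}}\varphi(\check g)$. Since $\alpha\in\wo$, everything here lives at a standard level of $M$ (Lemma \ref{LemmaNonstandardOrdinals}).

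The second step shows that this standard condition is, in $V$, a genuine piece of $A$ up to non-$\Delta^1_1$-random reals. Let $Y\in B$ be $\Delta^1_1$-random. Then $Y$ is random over $L_\alpha$, so by Lemma \ref{LemmaFsigmaDense} and Lemma \ref{LemmaForcingTheorem} applied to $L_\alpha$ we get $L_\alpha[Y]\models\varphi(Y)$. Since $\alpha\in\wo\subseteq\wo^Y$, Lemma \ref{LemmaKleeneRC} together with the choice of $\varphi$ (Lemma \ref{LemmaSGRandom}; only the direction from (2) to (1) is needed) gives $Y\in A$. So the $\PI^1_1$ statement ``every $\Delta^1_1$-random $Y$ in $D$ lies in $A$'' holds for $D=B$, and for every $\SIGMA^0_2$ subset $D\subseteq B$. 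By Lemma \ref{LemmaSG} this statement has a witness to the corresponding $\Sigma_1$ formula $\theta^*$ inside some $L_\gamma$ with $\gamma\in\wo$, i.e.\ a standard witness.

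The third step uses minimality in the definition of approximation to transfer this to stage $i$. I want a code $c_D$ with the following properties: $D\subseteq B$ up to measure zero, $D\cap\bigcup_{j<i}B_j=\varnothing$, $X\in D$, and $c_D$ lies at a standard level. Given such a $D$, we have $\lambda(D)>0$ because $X$ is $M$-random and $X\in D$. Also $c_D\Vdash\varphi(\check g)$, since $c_D$ extends $c_B$ and forcing is upward absolute by the level-independence lemma. Then $c_D$ satisfies \eqref{DefApprox2} at some standard level, and by the previous step it has a standard $\theta^*$-witness. The minimality of $\alpha_i$ then forces $\alpha_i$ to be standard. Because the G\"odel order $<_M$ ranks by $L$-levels, the $<_M$-least witness $w_i$ lies in $L^M_{\alpha_i}$ and is therefore standard, contradicting the hypothesis. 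Actually $\alpha_i$ is minimal only among levels where \eqref{DefApprox2}--\eqref{DefApprox4} all hold, so I need both the condition and the witness at standard levels; the second step supplies the witness.

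The main obstacle is producing $D$ at a standard level while keeping it disjoint from the earlier $B_j$, whose codes, like $i$ itself, may be nonstandard. My first attempt is to use genericity of $X$. By Lemma \ref{LemmaFsigmaDense}, $X$ is definably $\mathcal{B}_M$-generic. Below $c_B$, the conditions that are either $\le c_{B_j}$ for some $j<i$ or disjoint from $\bigcup_{j<i}B_j$ form an $M$-definable dense class. Since $X\notin B_j$ for $j<i$ (as $X\in B_i$ and the $B_j$ are pairwise disjoint), the filter of $X$ meets the second kind. If that condition can only be found at a nonstandard level, I would instead exploit that $M$ satisfies full \textsc{Foundation}: within $M$, take the least level at which some condition below $c_B$ and disjoint from the earlier $B_j$ contains $X$'s generic. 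Then I would argue via $\Sigma_1$-correctness of standard levels (Lemma \ref{LemmaMeasureZeroSigma2} and Lemma \ref{LemmaEmptyClosedSets} make the relevant conditions arithmetical) that this least level is bounded by $\alpha_i$ or lies in $\wo$. This bookkeeping is where I expect the real difficulty of the claim to lie.
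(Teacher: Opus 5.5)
There is a genuine gap, and it is the step you flag yourself: Step 3 is never carried out, and the tools you list cannot carry it out.

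The set $\bigcup_{j<i}B_j$ is defined from the initial segment $S_b\upharpoonright i$. If that union had a code at a standard level, your genericity argument would already finish the proof. It would give a standard-level $D$ with $X\in D$, disjoint from the union, and hence force $\alpha_i$ to be standard. The whole difficulty is the case where the union has no such code, for example when the standard-level stages are cofinal in the standard ordinals of $M$, or when some $j<i$ already has $\alpha_j$ nonstandard. In that case:
\begin{enumerate}
\item Genericity of $X$ applied to your dense class only yields a condition at a level where that class lives, hence a nonstandard one.
\item Taking the least such level in $M[X]$ by \textsc{Foundation} does not help. $B_i\cap B$ already shows that this least level is bounded by roughly $\alpha_i$, so the alternative ``bounded by $\alpha_i$'' is automatic and useless. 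Since the standard part of $\Ord^M$ is not definable over $M$ or $M[X]$, a least-element argument says nothing about whether that level lies in $\wo$.
\item Arithmeticity of ``$\lambda(\cdot)>0$'' and ``$\cdot=\varnothing$'' (Lemmas \ref{LemmaMeasureZeroSigma2} and \ref{LemmaEmptyClosedSets}) gives absoluteness between $M$ and $V$. It does not reflect statements with the nonstandard parameter $S_b\upharpoonright i$ down to standard levels.
\end{enumerate}
Note also that a $D$ as you want would be available at every stage $j\le i$. It would therefore bound all $\alpha_j$, $j\le i$, by a single standard ordinal, so your Step 3 is essentially the claim itself in another guise.

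The paper never needs disjointness from the nonstandard stages. Since $w_i$ is nonstandard, so is $\alpha_i$. The $\alpha_j$ are nondecreasing in $j$, because the set of available conditions only shrinks. Hence every $j$ with $\alpha_j$ standard satisfies $j<i$, and the requirement $B_i\cap\bigcup_{j<i}B_j=\varnothing$ makes $B_i$ disjoint from all standard-stage $B_j$. It then suffices that these standard-stage sets cover $A\cap R(M)$, which the paper takes from \eqref{eqFsigma}.

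So the right target is not to push $\alpha_i$ down. It is to show that your $X$ already lies in some $B_j$ with $\alpha_j$ standard, which contradicts $X\in B_i$. Your Steps 1 and 2 are the correct input for this: a condition $c_B$ at a standard level with $X\in B$, forcing $\varphi(\check g)$, with a standard $\theta^*$-witness. What still has to be supplied is the maximality argument: the standard part of the approximation cannot leave such an $M$-random point of a standard $\varphi$-forcing condition uncovered. That argument, not the construction of $D$, is where the work lies.
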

\proof
This is because, by the definition of an approximation, the set coded by $c_{B_i}$ must be disjoint from $c_{B_j}$ for all smaller $j$, and in particular for all $j$ such that $w_j$ is standard.  By \eqref{eqFsigma}, however, the collection of $B_j$ for which $w_j$ is standard exhausts all of $A \cap R(M)$.
\endproof

Below, we write $B^*_\gamma$ for the union of the sets coded by $c_{B_i}$ for $i<\gamma$.
\smallskip

\text{Case I}: There is $\beta \in \wo$ such that all approximations have support $<\beta$. 
Then, using $\SIGMA^1_1{-}\mathsf{AC}$, we can collect the approximations into one set, $S_\infty$. Letting $B^*_\infty$ be the union of the $F_\sigma$ sets occurring in the approximations, it follows from \eqref{eqFsigma} that
\[A \cap R(M) = B^*_\infty \cap R(M),\]
from which \eqref{eqRegularityRandom} follows.
\smallskip

\text{Case II}: For every $\beta \in \wo$, there is an approximation of support $>\beta$. 
By Claim \ref{ClaimNonstandardApprox}, there is a nonstandard approximation $S_b$. 
Then, $B^*_b$ can be defined as above (both in $M$ and in $V$!). Being a subset of $[0,1]$ and a union of $\SIGMA^0_2$ sets, $\lambda(B^*_b)$ is defined and $\lambda(B^*_b)\leq 1$. Moreover, $B^*_b$ can be written in $M$ as the disjoint union 
\[B^*_b = \bigcup\Big\{B_i: i<^{M} b\Big\},\]
where $B_i$ really is a $\SIGMA^0_2$ set for each $i<^M b$ and where the sets really are disjoint, as $M$ is an $\omega$-model of $\mathsf{ACA}_0$. 
Thus, we have 
\begin{equation}\label{eqMmeasureb}
M \models \lambda(B^*_b) = \sum_{i<b} \lambda(B_i) \leq 1.
\end{equation}
Here is the idea for the remainder of the proof: we shall separate the $\SIGMA^0_2$ sets in the sequence according to their measure. Since the measure of their union is at most $1$ and the sets are disjoint, 
there can only be finitely many sets whose measure is not too small. By rearranging the sets according to their measure, we can represent the measure of $B^*_b$ as an $\mathbb{N}$-indexed sum of real numbers which converges.
We shall approximate the inner and outer measures of $A$ by finitely many of these $\SIGMA^0_2$ sets up to a margin of error given by the union of all sets in the sequence which have small enough measure. 
We now proceed with more details.
For each $n\in\mathbb{N}$, let
\[I_n = \Big\{i<^M b: M \models \lambda(B_i) > \frac{1}{2^n}\Big\}.\]
Thus, $|I_n| \leq 2^n$ for each $n$ and, letting $I_0 = \varnothing$, we have
\[M \models \lambda(B^*_b) = \sum_{n = 1}^\infty \lambda\bigg(\bigcup_{i\in I_n\setminus I_{n-1}} B_i\bigg).\]
 Since the tail series of any converging sum of real numbers converges to $0$, we have
\[M \models \lim_{n\to\infty} \lambda\bigg( \bigcup_{i\not\in I_n} B_i\bigg) = 0.\]
Towards establishing the Lebesgue-regularity of $A$, we now fix $\varepsilon>0$ and find $n\in\mathbb{N}$ large enough so that 
\[M\models \lambda\Big( \bigcup_{i\not\in I_n} B_i\Big) < \varepsilon.\]
Let $l < 2^n$ and $i_0, i_1, i_2, \hdots, i_l$ be the finitely many $M$-ordinals below $b$ such that 
\[M \models \lambda(B_{i_k}) > \frac{1}{2^n}\]
for each $k = 0, \hdots, l$. Let 
\begin{align*}
D^+ &= \bigcup \Big\{B_{i_k}: k \leq l \text{ and $(\alpha_{i_k}, c_{B_{i_k}}, w_{i_k})$ is standard}\Big\}\\
D^- &= \bigcup \Big\{B_{i_k}: k \leq l \text{ and $(\alpha_{i_k}, c_{B_{i_k}}, w_{i_k})$ is nonstandard}\Big\}.
\end{align*}
Appealing to $\SIGMA^1_1$-Induction and Lemma \ref{LemmaInductionComprehension}, we see that $D^+$ and $D^-$ exist as sets.
By Claim \ref{ClaimDisjoint}, $D^-$ is disjoint from $A\cap R(M)$. 
Thus, by \eqref{eqFsigma}, we have 
\[D^+ \subset A \cap R(M) \subset B^*_b\setminus D^-\]
and thus, recalling that $\lambda(R(M)) = 1$, we have
\[\lambda(D^+) = \lambda_*(D^+) \leq \lambda_*(A) \leq \lambda^*(A) \leq \lambda^*(B^*_b \setminus D^-) = \lambda(B^*_b \setminus D^-).\]
Since
\[(B^*_b \setminus D^-) \setminus D^+ = \bigcup_{i \not \in I_n} B_i,\]
it follows that
\[\lambda^*(A) - \lambda_*(A) \leq \lambda(B^*_b\setminus D^-) - \lambda(D^+) = \lambda\Big(\bigcup_{i\not\in I_n} B_i\Big) < \varepsilon,\]
which completes the proof.

\section{Conclusions}\label{SectConclusion}
Theorem \ref{TheoremRegular} is obtained by combining the proof in \S\ref{SectRegularity} with the reversal given by Lemma \ref{LemmaReversalInd}.
If one has access to $\PI^1_1$-Comprehension, then the argument of  \S\ref{SectRegularity} readily adapts to yield the existence of $\lambda(A)$, as one can directly separate the set of standard indices from nonstandard indices in an approximation to $A$.
(Note, however, that in this setting the proof could be simplified radically, taking $M = L_{\omega_1^{ck}}$, bypassing the reference to non-standard models, using $\Sigma_1$-Collection to avoid most subtleties with the forcing argument, and  having $A \cap R(M)$ represented as a union of approximations of length $\leq\omega_1^{ck}$.)
Theorem \ref{TheoremMeasurable} is obtained by combining this observation with the reversal given by Lemma \ref{LemmaReversalComp}.

Let us point out that while the proof in \S\ref{SectionApprox} makes use of the fact that the space $[0,1]$ is of finite measure, this fact is not required for the conclusion of the theorem.

\begin{theorem}
Suppose $\mathsf{ATR}_0 + \SIGMA^1_1{-}\mathsf{IND}$ holds. Then, every analytic set $A\subset\mathbb{R}$ is Lebesgue-regular.
\end{theorem}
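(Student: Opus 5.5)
The plan is to reduce to the bounded case already treated in \S\ref{SectRegularity}, by covering $\mathbb{R}$ with the countably many unit intervals $[k,k+1]$ for $k\in\mathbb{Z}$. Fix an analytic (or, dually, co-analytic) $A\subset\mathbb{R}$, given by a formula with real parameter $P$. For each $k$, the set $A_k=\{x-k : x\in A\cap[k,k+1]\}\subset[0,1]$ is again analytic, uniformly in $k$ and with parameter $P$. The argument of \S\ref{SectRegularity}, relativized to $P$, shows that each $A_k$ is Lebesgue-regular. Translation by $k$ preserves open sets, compact sets and measure, so each $A\cap[k,k+1]$ is regular. Since $\lambda^*$ and $\lambda_*$ may take the value $+\infty$ on $\mathbb{R}$, regularity here means $\lambda^*(A)=\lambda_*(A)$ in the extended sense: for every rational $q$, $\lambda^*(A)>q$ iff $\lambda_*(A)>q$.

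The main obstacle is the gluing step. The obvious route would pick, for each $k$, an open $H_k\supseteq A_k$ and a compact $G_k\subseteq A_k$ with $\lambda(H_k)-\lambda(G_k)<\varepsilon 2^{-|k|-2}$. But choosing such $H_k,G_k$ uniformly in $k$ appears to need a choice principle over a $\SIGMA^1_1$-like condition, and $\atr$ gives us $\SIGMA^1_1$-$\mathsf{AC}_0$ only for $\SIGMA^1_1$ matrices. The statement ``$G\subseteq A_k$'' for compact $G$ and analytic $A_k$ is $\PI^1_1$-like, so this choice is not obviously available. I would try two things.

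\textbf{First attempt: avoid countable gluing.} To prove $\lambda^*(A)\le\lambda_*(A)$, it suffices to show, for each rational $q<\lambda^*(A)$, that $\lambda_*(A)>q-\varepsilon$. If $q<\lambda^*(A)$, then by countable subadditivity of outer measure there is a finite $N$ with $q-\varepsilon/2<\lambda^*(A\cap[-N,N])$. This step uses only open covers, whose existence is arithmetic given codes. The set $A\cap[-N,N]$ is a finite union of $2N$ sets of the form $A\cap[k,k+1]$. For it, we need only finitely many choices of compact $G_k$, and these are available by $\SIGMA^1_1$-Induction via Lemma \ref{LemmaInductionComprehension}, which is the same device the paper used to form $D^+$ and $D^-$. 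Alternatively, scale $[-N,N]$ affinely onto $[0,1]$ and apply \S\ref{SectRegularity} directly. Either way we get a compact $G\subseteq A\cap[-N,N]$ with $\lambda(G)>\lambda^*(A\cap[-N,N])-\varepsilon/2>q-\varepsilon$. The reverse inequality $\lambda_*(A)\le\lambda^*(A)$ is trivial.

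\textbf{Case split on $\lambda^*(A)$.} The case $\lambda^*(A)=\infty$ is handled by the same argument with $q$ arbitrary, giving $\lambda_*(A)=\infty$. If $\lambda^*(A)<\infty$, the finite-$N$ argument shows that for every $\varepsilon$ there is a compact $G\subseteq A$ with $\lambda(G)>\lambda^*(A)-\varepsilon$. That is regularity in the finite case.

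The only point needing care is the use of subadditivity for $\lambda^*$. I would justify it by taking, for each $N$, open sets $U\supseteq A$. Then $\lambda^*(A)$ is approximated by $\sup_N\lambda^*(A\cap[-N,N])$, because an open cover of each truncation with small excess can be assembled. The assembly would run through $\Sigma^1_1$-$\mathsf{AC}_0$: the condition ``$U$ is open and $U\supseteq A\cap[-N,N]$'' for analytic $A$ is $\PI^1_1$ in general. So I would apply the truncation argument inside the bounded theorem itself, which already produces the needed open supersets on $[-N,N]$ by scaling. This way the only choices made are finitely many, and no $\PI^1_1$ choice is ever invoked.
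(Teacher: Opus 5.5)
\textbf{There is a genuine gap.} It sits in the step you rely on in both cases: ``if $q<\lambda^*(A)$, then by countable subadditivity there is $N$ with $q-\varepsilon/2<\lambda^*(A\cap[-N,N])$.'' Its content is that open covers of the truncations $A\cap[-N,N]$, with measures bounded by roughly $q-\varepsilon/2$, can be combined into one open cover of all of $A$ with measure about $q$. Doing this means choosing, for every $k$ (or every $N$), an open $U_k\supseteq A\cap[k,k+1]$ of nearly optimal measure. That is exactly the countable gluing you yourself flagged as unavailable. The condition $U\supseteq A\cap[k,k+1]$ is $\PI^1_1$, or $\PI^1_2$ for co-analytic $A$. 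Moreover, the target values $\lambda(A\cap[k,k+1])$ need not exist as reals in $\atr+\SIGMA^1_1{-}\mathsf{IND}$, by Lemma \ref{LemmaReversalComp}.

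Your final paragraph does not remove the problem. The bounded theorem applied on $[-N,N]$ only yields covers of that truncation, and turning these into a cover of $A$ is again an infinite choice. So the claim that ``only finitely many choices are made'' is false for precisely this step.

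\textbf{What can be repaired.} If $A$ has some open superset $H_0$ of finite measure, no choice is needed:
\begin{enumerate}
\item take $N$ with $\lambda(H_0\cap\{|x|>N-1\})<\varepsilon/2$;
\item apply the scaled bounded theorem to get compact $G$ and open $H_1$ with $G\subseteq A\cap[-N,N]\subseteq H_1$ and $\lambda(H_1)-\lambda(G)<\varepsilon/2$;
\item use the open set $H_1\cup(H_0\cap\{|x|>N-1\})$, which covers $A$ and has measure less than $\lambda(G)+\varepsilon$.
\end{enumerate}

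\textbf{What remains unproved.} This is the case where $A$ has no open superset of finite measure. There you must show $\lambda_*(A)=\infty$, i.e.\ that finite inner measure forces a finite-measure outer cover. Nothing in your proposal establishes this, and it is the real content of the unbounded theorem.

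\textbf{How the paper handles it.} The paper does not reduce to bounded pieces. Assuming $\lambda_*(A)<n^*$, it considers the $\PI^1_1$ set of $n<n^*$ that lie below $\lambda(B^*_\beta)$ for some standard approximation $S_\beta$. Applying $\SIGMA^1_1{-}\mathsf{IND}$ to this set gives a maximum $n_0$ with $n_0<\lambda_*(A)\le n_0+1$. It then uses \textsc{Foundation} in $M$ to cut a nonstandard approximation $S_b$ just before its measure exceeds $n_0+1$. The resulting $B^*_b\setminus D^-$ is a single set, defined inside $M$, of finite measure and containing $A\cap R(M)$. The global outer approximation thus comes from $M$ rather than from any external choice, and Case~II then goes through as before.
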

\proof
Let $A\subset\mathbb{R}$ be $\Pi^1_1$. If $A$ has infinite inner measure, then there is nothing to prove. Otherwise, suppose that $\lambda_*(A) < \infty$. 
We consider a model $M$ and approximations $S$ to $A$ as in \S\ref{SectionApprox}. If all approximations $S$ have support $<\beta$ for some $\beta \in \wo$, then we complete the proof as in \textsc{Case I} in the proof in  \S\ref{SectionApprox}.

Otherwise we would like to argue as in \textsc{Case II} in the proof in  \S\ref{SectionApprox}. The  obstacle is that non-standard approximations might contain codes for $\SIGMA^0_2$ sets with arbitrarily large measure. Thus, we argue as follows. Fix $n^*$ such that $\lambda_*(A) < n^*$. Consider now the set
$N$ of all $n < n^*$ such that some standard approximation $S_\beta$ to $A$ satisfies 
\[n < \lambda(B^*_\beta),\]
where $B^*_\beta$ is, as in \S\ref{SectionApprox} the union of all sets $B_i$ with $c_{B_i}$ occurring in the approximation $S_\beta$. This is a $\PI^1_1$ set of numbers $<n^*$ which by $\SIGMA^1_1{-}\mathsf{IND}$ has a maximum element $n_0$. Thus, we know that
\[n_0 < \lambda_*(A) \leq n_0+1.\]
Now, let $S_b$ be a non-standard approximation, which exists by the case hypothesis. We may assume without loss of generality that $\lambda(B^*_b) \leq n_0+1$, for otherwise by Lemma \ref{LemmaFoundation} there is a least $M$-ordinal $a< b$ satisfying  $\lambda(B^*_a) > n_0+1$. Since every set occurring in an approximation has positive measure, $a$ must be a successor ordinal, and thus replacing $b$ by the predecessor of $a$, we have
\[\lambda(B^*_b) \leq n_0+1,\] 
as desired. Thus, we have 
\[M \models \lambda(B^*_b) = \sum_{i<b} \lambda(B_i) \leq n_0+1,\]
after which the proof proceeds exactly as in \textsc{Case II}  in  \S\ref{SectionApprox}, starting from \eqref{eqMmeasureb}.
\endproof

Similarly, we have:
\begin{theorem}
Suppose that $\PI^1_1{-}\mathsf{CA}_0$ holds. Then, every analytic set $A\subset\mathbb{R}$ is Lebesgue-measurable.
\end{theorem}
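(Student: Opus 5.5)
Since $\PI^1_1{-}\mathsf{CA}_0$ proves $\atr$ and $\SIGMA^1_1$-Induction, the preceding theorem already shows that every analytic $A\subset\mathbb{R}$ is Lebesgue-regular. What remains is to show that the common value $\lambda^*(A)=\lambda_*(A)$ exists as an element of $[0,\infty]$. The plan is to localize to unit intervals, pass to complements so that the machinery of \S\ref{SectRegularity} (which handles $\Pi^1_1$ sets) applies, and then sum the pieces.

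\textbf{Step 1: localization and complementation.} Fix an analytic $A$, relativizing everything to its real parameter. For $n\in\mathbb{Z}$ put $A_n=A\cap[n,n+1]$ and $C_n=[n,n+1]\setminus A_n$. Each $C_n$ is $\Pi^1_1$ (in the parameter), uniformly in $n$. After translating to $[0,1]$, I run the construction of \S\ref{SectionApprox} on $C_n$. This uses the model $M$ of Lemma \ref{LemmaNonstandardOrdinals}, the formula $\varphi$ of Lemma \ref{LemmaSGRandom}, and the approximations $S$.

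\textbf{Step 2: the measure of $C_n$ exists.} Using $\PI^1_1{-}\mathsf{CA}_0$, I form the set of standard indices.
\begin{enumerate}
\item In Case II, I take a nonstandard approximation $S_b$. The set $J=\{i<^M b : (\alpha_i,c_{B_i},w_i)\text{ is standard}\}$ is $\PI^1_1$-definable, since standardness of an $M$-ordinal means its initial segment is wellfounded. Hence $J$ exists.
\item In Case I, I simply take $J$ to index the collected set $S_\infty$.
\end{enumerate}
In either case let $B^+=\bigcup_{i\in J}B_i$. This is a countable union of $\SIGMA^0_2$ sets given by a single code, so it is Borel and $\lambda(B^+)$ exists by Yu's theorem in $\atr$. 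By \eqref{eqFsigma} and Claim \ref{ClaimDisjoint}, $C_n\cap R(M)=B^+\cap R(M)$. Since $\lambda(R(M))=1$, we get $\lambda^*(C_n)=\lambda_*(C_n)=\lambda(B^+)$, so $\lambda(C_n)$ exists. Moreover, the sequence $n\mapsto\lambda(C_n)$ exists as a set, because the construction is uniform in $n$ and comprehension gives all the index sets $J_n$ at once.

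\textbf{Step 3: transfer to $A_n$.} This is the step I expect to be the main obstacle. I need to check that $\lambda(A_n)=1-\lambda(C_n)$ really holds for the inner and outer measures as defined, without full measure theory on non-Borel sets. I will argue directly from the pair $B^+$ and $R(M)$.
\begin{enumerate}
\item The set $[n,n+1]\setminus B^+$ is Borel. It agrees with $A_n$ off the null set $[0,1]\setminus R(M)$, which is contained in an arithmetically coded $\SIGMA^0_2$-null union.
\item Given $\varepsilon>0$, Borel regularity in $\atr$ gives a compact $K\subset[n,n+1]\setminus B^+$ and an open $U\supseteq[n,n+1]\setminus B^+$, each within $\varepsilon$ of $1-\lambda(B^+)$.
\item I then cover the null exceptional set by an open set of measure less than $\varepsilon$. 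I remove it from $K$ to keep a compact set inside $A_n$, and add it to $U$ to get an open superset of $A_n$.
\end{enumerate}
This shows $\lambda_*(A_n)=\lambda^*(A_n)=1-\lambda(C_n)$.

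\textbf{Step 4: summing the pieces.} Finally I set $\lambda(A)=\sum_{n}\lambda(A_n)$. The partial sums exist and their supremum exists in $[0,\infty]$ by $\aca$. Gluing the compact sets (on finitely many intervals) and the open sets (with errors $\varepsilon 2^{-|n|-2}$) shows that this value equals both $\lambda_*(A)$ and $\lambda^*(A)$. The endpoint overlaps are null and cause no difficulty. This gives the existence of $\lambda(A)$ and hence the theorem.
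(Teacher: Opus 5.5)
Your proposal is correct, and its core is the paper's own argument. The paper proves this theorem only by ``similarly'', relying on the remark in \S\ref{SectConclusion} that under $\PI^1_1$-comprehension the standard indices of an approximation can be separated from the nonstandard ones; that is exactly your Step~2.

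The difference lies in how you reduce to that step. The paper treats $A$ as a $\PI^1_1$ subset of $\mathbb{R}$ and runs \S\ref{SectionApprox} on it directly, as in the preceding theorem, leaving the passage between analytic and co-analytic sets implicit. You instead cut $\mathbb{R}$ into unit intervals and pass to the complements $C_n$, so that \S\ref{SectionApprox} applies verbatim on $[0,1]$; you then transfer back and sum. This costs you the routine bookkeeping of Steps~3--4. In exchange, the reduction is genuinely cleaner. On $\mathbb{R}$, the complement of a bounded set always has $\lambda_*=\lambda^*=\infty$, so knowing that the complement is regular or measurable says nothing about $A$. Some localization like yours is therefore needed to get from the $\PI^1_1$ case to analytic sets.

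Two minor remarks. First, your opening appeal to the regularity theorem is never used, since Steps~2--3 reprove regularity for each $A_n$. Second, with $\PI^1_1$-comprehension you could skip approximations entirely, and with them the exhaustion argument behind Claim~\ref{ClaimDisjoint}. Let $B^+$ be the union of all $B$ with $c_B\in L^M_\alpha$ for some standard $\alpha$ and $c_B\Vdash^{L_\alpha}_{\mathcal{B}_{L_\alpha}}\varphi(\check g)$. This index set is $\PI^1_1$-definable and hence exists, and \eqref{eqFsigma} alone then gives $C_n\cap R(M)=B^+\cap R(M)$. This is in the spirit of the simplification mentioned in \S\ref{SectConclusion}.
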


We finish with an application which does not make any reference to Reverse Mathematics. The following notion is familiar from computable analysis: a real $x\in\mathbb{R}$ is \textit{left-c.e.} if it is a limit of a recursively enumerable, increasing sequence of rational numbers. Below,  $\omega_1^x$ denotes the smallest ordinal not recursive in $x \in\mathbb{R}$.

\begin{definition}
A function $f:\mathbb{R}\to\mathbb{R}$ is \textit{left-coanalytic} if $\phi(x)$ is a limit of an increasing sequence of rational numbers which is $\Sigma_1$-definable over $L_{\omega_1^x}(x)$, uniformly.
\end{definition}

The proof of Theorem \ref{TheoremRegular} yields:
\begin{theorem}\label{TheoremComplexity}
Let $f$ be defined by $f(x) = \lambda(A)$ if $x$ codes a coanalytic set $A$ and $f(x) = 0$ otherwise. Then, $f$ is left-coanalytic.
\end{theorem}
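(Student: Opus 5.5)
We read Theorem \ref{TheoremComplexity} in the true universe (full $\zfc$, so $\PI^1_1$-comprehension is available). By Theorem \ref{TheoremMeasurable}, $\lambda(A)$ exists for every coanalytic $A$. Our task is to show that $\lambda(A)$ is the supremum of an increasing sequence of rationals that is $\Sigma_1$ over $L_{\omega_1^x}[x]$, uniformly in the code $x$ of $A$. The natural source of such rationals is the inner approximation from \S\ref{SectionApprox}: the measures $\lambda(B^*_\beta)$ of the standard approximations $S_\beta$ to $A$, for $\beta<\omega_1^x$.

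\textbf{Step 1: realize the approximations inside $L_{\omega_1^x}[x]$.} As remarked in \S\ref{SectConclusion}, in the presence of $\PI^1_1$-comprehension we may take $M=L_{\omega_1^{x}}[x]$ (relativized to the parameter $x$) in place of the non-standard model. We then define approximations to $A$ exactly as before, with the ordinals $\alpha_i$ ranging over $\omega_1^x$. Each clause in the definition of an approximation is checked inside some $L_\alpha[x]$ with $\alpha<\omega_1^x$:
\begin{itemize}
\item measure positivity and disjointness are arithmetical, by Lemmata \ref{LemmaMeasureZeroSigma2} and \ref{LemmaEmptyClosedSets};
\item the forcing relation is local to $L_{\alpha_i}[x]$;
\item the witness $w_i$ is a $\Sigma_1$ witness via Lemma \ref{LemmaSG}.
\end{itemize}
Consequently, the predicate ``$S$ is an approximation to $A$ of length $\beta$'' is $\Sigma_1$ over $L_{\omega_1^x}[x]$, uniformly in $x$. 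By Claim \ref{ClaimUnique}, these approximations are unique and cohere as $\beta$ increases.

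\textbf{Step 2: the enumeration.} We enumerate a rational $q$ whenever some approximation $S_\beta$ of length $\beta$ in $L_{\omega_1^x}[x]$ satisfies $q<\lambda(B^*_\beta)$. The value $\lambda(B^*_\beta)$ is computed in $L_{\beta'}[x]$ as a countable disjoint sum of measures of $\SIGMA^0_2$ sets, so this condition is $\Sigma_1$. The resulting set of rationals is $\Sigma_1$-definable, and its $<_L$-order of appearance can be turned into an increasing sequence in the usual way: at each stage, take the maximum found so far. Its supremum is $\sup_\beta \lambda(B^*_\beta)=\lambda(B^*_\infty)$, where $B^*_\infty$ is the union over all standard approximations.

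\textbf{Step 3: the supremum is $\lambda(A)$.} This is Case I of \S\ref{SectionApprox}, run with $M=L_{\omega_1^x}[x]$. Every real $X$ random over this model satisfies \eqref{eqFsigma}, where Lemma \ref{LemmaRandomRecursive} relativizes via Sacks/Spector: $\omega_1^{X\oplus x}=\omega_1^x$ for such $X$. Hence $X\in A$ iff $X$ lies in some $B_i$ from a standard approximation. Moreover each $B_i$ is contained in $A$ up to a null set, by clause \eqref{DefApprox4} together with the fact that $\Delta^1_1$-random reals have full measure. Therefore $\lambda(A)=\lambda(A\cap R(M))=\lambda(B^*_\infty)$. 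Finally, $B^*_\infty$ is a countable disjoint union, so its measure is the limit of the partial measures.

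\textbf{Step 4: the case where $x$ is not a code.} We must also handle $x$ that does not code a coanalytic set, where $f(x)=0$. Here the plan is to observe that our encoding makes every $x$ code some $\Pi^1_1(x)$ formula, so this case is degenerate. If the enumeration needs a fallback, we output the constant sequence $0$ when $x$ is syntactically not a code; this check is recursive.

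\textbf{Main obstacle.} We expect Step 1 to be the hardest part: verifying that the forcing apparatus of Lemma \ref{LemmaForcingTheorem} and the characterization \eqref{eqFsigma} go through for the standard model $L_{\omega_1^x}[x]$. The issue is that this model satisfies only $\Sigma_1$-admissibility rather than full \textsc{Foundation}. We would handle this via $\Sigma_1$-collection in $L_{\omega_1^x}[x]$, as the remark in \S\ref{SectConclusion} suggests, since each $\varphi$-instance is decided by set forcing over some $L_\alpha[x]$.
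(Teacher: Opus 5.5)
Your route is the paper's: read off the approximations $S_\beta$, $\beta<\omega_1^x$, in $L_{\omega_1^x}[x]$, note that $\beta\mapsto\lambda(B^*_\beta)$ is $\Sigma_1$ there and increasing, and identify its supremum with $\lambda(A)$. Steps 1, 2 and 4 are fine up to routine details. The identification in Step 3, however, is not justified as written.

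You invoke Case I, but Case I is the situation in which the approximations stop at some $\gamma<\omega_1^x$. Only then does \eqref{eqFsigma} give exhaustion: a random $X\in A\setminus B^*_\gamma$ lies in a condition $B$ forcing $\varphi$ with $\lambda(B\setminus B^*_\gamma)>0$, and a $\SIGMA^0_2$ kernel of $B\setminus B^*_\gamma$ would be a further candidate, contradicting termination. Working in the transitive model $L_{\omega_1^x}[x]$ does not put you in that case. If the approximations stop below $\omega_1^x$, then $\lambda(A)=\lambda(B^*_\gamma)$ is $\Delta^1_1(x)$. So for every $A$ of non-hyperarithmetic measure, such as $[0,\mathcal{O}^x]$ at the end of the paper, the approximations run through all of $\omega_1^x$, and this is the case the theorem is about. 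There, \eqref{eqFsigma} only places a random $X\in A$ in \emph{some} condition $B$ forcing $\varphi$, not in one of the sets $B_i$ actually selected. Each stage keeps only the $<_L$-least eligible code, any selection meeting the kernel of $B\setminus B^*_j$ (even in a null set) disqualifies that kernel, and the next kernel is in general coded only at levels that grow with $j$. So ``Hence $X\in A$ iff $X$ lies in some $B_i$'' needs an exhaustion argument for the greedy sequence, which you do not supply.

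The simplest repair is to drop the disjoint, one-set-per-stage bookkeeping, which the paper needs only to form the sum \eqref{eqMmeasureb} inside a nonstandard model. For $\alpha<\omega_1^x$, let $E_\alpha$ be the union of all $B$ for which some $\alpha'\le\alpha$ satisfies conditions (a) and (b) of clause \eqref{DefApprox2} and clause \eqref{DefApprox4} with $c_B,w\in L_{\alpha'}[x]$; disjointness and minimality are dropped. Then:
\begin{enumerate}
\item $\alpha\mapsto\lambda(E_\alpha)$ is increasing and $\Delta_1$ over $L_{\omega_1^x}[x]$, uniformly in $x$.
\item Each $E_\alpha$ is contained in $A$ up to a null set by clause \eqref{DefApprox4}, exactly as you argue for the $B_i$.
\item $A\cap R\subseteq\bigcup_{\alpha<\omega_1^x}E_\alpha$ now genuinely follows from \eqref{eqFsigma}. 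If $B$ forces $\varphi$ over some $L_\alpha[x]$, then each $\Delta^1_1(x)$-random $Y\in B$ is generic over $L_\alpha[x]$, so $L_\alpha[x][Y]\models\varphi(Y)$ and $Y\in A$. Hence the $\PI^1_1$ formula $\theta$ holds, and by Lemma \ref{LemmaSG} it has a witness below $\omega_1^x$.
\end{enumerate}
Since $\omega_1^x$ is countable, $\lambda(A)=\sup_{\alpha<\omega_1^x}\lambda(E_\alpha)$, so the left cut of $\lambda(A)$ is $\Sigma_1$ over $L_{\omega_1^x}[x]$. This is the classical computation of the measure of a $\Pi^1_1$ set.

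Finally, the obstacle you anticipate is not one. $L_{\omega_1^x}[x]$ is transitive and therefore satisfies full \textsc{Foundation}, and only set forcing over the levels $L_\alpha[x]$ is ever used.
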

\proof
Let $x$ be a code for a coanalytic set $A$.
Uniformly in $x$, the proof of Theorem \ref{TheoremRegular} (especifically, the construction in \S\ref{SectionApprox})  yields a sequence of approximations $\{S_\beta: \beta\leq \omega_1^x\}$ to $A$. From each $S_\beta$ one can compute the measure $\lambda(B^*_\beta)$ of the union of all $F_\sigma$ sets occurring in $S_\beta$. As these sets $B^*_\beta$ converge in measure to $A$ as $\beta$ approaches $\omega_1^x$, the result follows.
\endproof
Theorem \ref{TheoremComplexity} is best possible, as can be seen by an argument as in the proof of Lemma \ref{LemmaReversalComp}. Specifically, for all $x\in\mathbb{R}$, the set $A_x  = [0, \mathcal{O}^x] \in \Pi^1_1(x)$ satisfies $\lambda(A_x) = \mathcal{O}^x$.

\bibliographystyle{abbrv}
\bibliography{ReferencesJPA.bib}

\end{document}